\newtheorem{lemma}{Lemma}[section]
\newtheorem{thm*}{Theorem}
\newtheorem{cor*}[thm*]{Corollary}
\newtheorem{thm}[lemma]{Theorem}
\newtheorem{cor}[lemma]{Corollary} 
\theoremstyle{definition} 
\newtheorem{Def}[lemma]{Definition}
\newtheorem{Not}[lemma]{Notations}
\newtheorem{exam}[lemma]{Example}  \theoremstyle{remark} 
\newtheorem{rem}[lemma]{Remark} 
\newtheorem{rems}[lemma]{Remarks}  
\newcommand{\F}{\mathcal{F}}
\title[Subshifts of Finite Type]{On the Perron root and eigenvectors associated with a subshift of finite type}
\author[Haritha Cheriyath]{Haritha Cheriyath}
\address{Department of Mathematics\\
Indian Institute of Science Education and Research Bhopal\\
Bhopal Bypass Road, Bhauri \\
Bhopal 462 066, Madhya Pradesh\\
India}
\email{harithacheriyath@gmail.com,\ charitha@iiserb.ac.in}
\author[Nikita Agarwal]{Nikita Agarwal}
\address{Department of Mathematics\\
Indian Institute of Science Education and Research Bhopal\\
Bhopal Bypass Road, Bhauri \\
Bhopal 462 066, Madhya Pradesh\\
India}
\email{nagarwal@iiserb.ac.in}
\date{}
\begin{document} 
\maketitle
\begin{abstract}
	In this paper, we describe the relationship between the Perron root and eigenvectors of an irreducible subshift of finite type with the correlation between the forbidden words in the subshift. In particular, we derive an expression for the Perron eigenvectors of the associated adjacency matrix. As an application, we obtain the Perron eigenvectors for irreducible $(0,1)$ matrices which are adjacency matrices for directed graphs. Moreover, we derive an alternate definition of the Parry measure in ergodic theory on an irreducible subshift of finite type.  
\end{abstract}

\noindent \textbf{Keywords}: Correlation polynomial of words, the Perron-Frobenius theorem, Symbolic dynamics, Subshift of finite type, Local escape rate, Parry measure.\\
\noindent \textbf{2010 Mathematics Subject Classification}: 37B10 (Primary); 68R15 (Secondary)

\section{Introduction}

The Perron-Frobenius Theorem~\cite{Perron,Frobenius} is one of the most celebrated results in matrix theory with vast applications within mathematics and in other disciplines such as engineering, social sciences, and network theory, see~\cite{Brin,Keyfitz_book,NM,Parry64,Penner}. We will only state a part of the result which is crucial for the results in this paper.

\subsection*{The Perron-Frobenius Theorem}
A square non-negative matrix $A$ is said to be \textit{irreducible} if for each $i,j$, there exists $\ell=\ell(i,j)\ge 1$ such that the $ij^{th}$ entry of $A^\ell$ is positive. Let $p(i)$ denote the greatest common divisor of all $k$ such that the $ii^{th}$ entry of $A^{k}$ is positive. For an irreducible matrix $p(i)=P$ for all $i$, and $P$ is known as the \textit{period} of the matrix. 
The matrix $A$ is said to be \textit{primitive} if there exists $\ell\ge 1$ such that each entry of $A^\ell$ is positive. The period of $A$ equals one if $A$ is a primitive matrix.  
A non-negative matrix which is not irreducible is known as a \textit{reducible} matrix.  \\~\\
Let $A$ be a non-negative irreducible matrix with period $P\ge 1$ and spectral radius $\theta$. Then the Perron-Frobenius Theorem states the following:
\begin{itemize}
	\item The spectral radius $\theta$ is positive and an eigenvalue of $A$. There are exactly $P$ eigenvalues on the circle with radius $\theta$ centered at the origin and are given by $\theta$ times the $P^{th}$ roots of unity. The eigenvalue $\theta$ is known as the \textit{Perron root} or \textit{Perron value}. Consequently when $A$ is primitive, $\theta$ is the largest eigenvalue of $A$ in modulus, that is, all other eigenvalues of $A$ have modulus strictly less than $\theta$. 
	\item Each of the eigenvalue with modulus $\theta$ is simple. The left and right eigenspaces corresponding to the Perron root $\theta$ are one-dimensional and there exists a left/right eigenvector which has all its entries positive known as the \textit{left/right Perron eigenvector}. 
	\item If $A$ is primitive and $V$ and $U$ are normalized right and left Perron eigenvectors such that $U^TV=1$, then $\lim_{k\rightarrow\infty}A^k/\theta^k=VU^T$, which is the spectral projection onto the one-dimensional eigenspace for $\theta$.  
\end{itemize}

If $A$ is reducible, using simultaneous row/column permutations, it can be transformed into a block upper triangular matrix where each non-zero diagonal blocks is irreducible. The spectral radius $\theta\ge 0$ of $A$ is an eigenvalue of $A$ with a left and a right eigenvector having non-negative entries. The eigenvalue $\theta$ equals the maximum of the Perron roots of its diagonal blocks and it need not be simple. We call $\theta$ the Perron root of $A$ and all its associated non-negative eigenvectors as Perron eigenvectors.

Irreducible $(0,1)$ matrices turn out to be the adjacency matrices of strongly connected directed graphs with at most a single edge from one vertex to another. The Perron root and eigenvectors play a crucial role in understanding the connectivity of the underlying graph, see~\cite{Brualdi} and references therein. 

There is a never ending quest to develop algorithms to efficiently compute or estimate the Perron root and eigenvectors of a given matrix, see~\cite{Markham,Dembele,Brauer,DZ,Elsner,Lu,Bunse,HP,Noda}. In this paper, we will present a combinatorial method to compute the Perron root and eigenvectors. The method will be, more generally, applicable to subshifts of finite type and binary matrices will be a special case. Certain tools from combinatorics will be used to compute the Perron root and eigenvectors. These techniques, along with concepts from ergodic theory, specifically the local escape rate, will be used to compute the normalization factor of the eigenvectors. 

\section{Preliminaries}

\subsection{Subshift of finite type}\label{subsec:sft}
Subshifts of finite type are used to model a large class of dynamical systems. The problem of counting the number $f(n)$ of allowed words of length $n$ in a subshift has applications to comma-free codes, games, pattern matching, and several problems in probability theory, including finding the number of events which avoid appearance of a given set of events as sub-events. We refer to~\cite{Combinatorial} and~\cite{String} for an extensive account of several applications. Since $f(n)$ generally does not have a simple explicit formula, we study its generating function $F(z)$. The function $F(z)$ is rational and its special form helps to understand the asymptotic behavior of $f(n)$. The generating function $F(z)$ is described using the correlation between forbidden words, which is a polynomial function representing the overlapping of one word onto another. For subshifts of finite type which are irreducible, there is a unique measure of maximal entropy, known as the Parry measure introduced in~\cite{Parry64}. It is defined using an irreducible adjacency matrix which encodes the dynamics of the subshift. The Parry measure is obtained using the Perron-Frobenius theorem applied on this adjacency matrix. The logarithm of the Perron root is the topological entropy of the subshift and the Perron eigenvectors capture the connectivity between words, see~\cite{LM_book}. 

For $q\geq 2$, let $\Sigma=\{0,1,\dots,q-1\}$ be the set of \textit{symbols} and $\Sigma^{\mathbb{N}}$ be the set of all \textit{one-sided sequences} with symbols from $\Sigma$. A \textit{word} of length $n$ with symbols from $\Sigma$ is a finite tuple, denoted as $w_1w_2\dots w_n$, for some $w_1,\dots ,w_n\in \Sigma$. A \textit{subshift of finite type} $X\subset \Sigma^{\mathbb{N}}$ is a set of all sequences that do not contain words from a finite collection. Such words are called \textit{forbidden words}. An \textit{allowed word in $X$} is a word which appears as a subword in a sequence in $X$. In general, the set of symbols can be infinite, the sequences can be bi-infinite, and there can be infinitely many forbidden words, we do not consider any of these cases in this paper. The collection of forbidden words is said to be \textit{minimal} if all subwords of the forbidden words are allowed in sequences in $X$. Such a minimal collection is unique for a given subshift. Note that a minimal collection can have words of different lengths. If $\F$ is a minimal forbidden collection of finitely many words which describe $X$, we denote $X$ as $\Sigma_\F$. We assume $\F$ does not contain symbols, that is, words of length one (since if a symbol is forbidden, it can be removed from the symbol set $\Sigma$ altogether). A subshift of finite type $\Sigma_\F$ is said to be a \textit{one-step subshift} if all the words in $\F$ have length two. 

Every subshift of finite type is conjugate to a one-step shift via a block map (see~\cite{LM_book}). Consider a subshift of finite type $\Sigma_\F$. Let the longest word in $\F$ have length $p\ge 2$. Every sequence in $\Sigma_\F$ can be visualized as a sequence with allowed words of length $p-1$ as symbols which overlap progressively, that is 
\begin{eqnarray}\label{eq:conj}
	x_1x_2x_3\dots &\rightarrow& (x_1\dots x_{p-1})(x_2\dots x_p)(x_3\dots x_{p+1})\dots. 
\end{eqnarray}
This is known as a \textit{sliding block map}, where the sequence on the right has the following property: any two consecutive symbols $(x_1\dots x_{p-1})$ and $(y_1\dots y_{p-1})$ satisfy $x_2\dots x_{p-1}=y_1\dots y_{p-2}$ and $x_1\dots x_{p-1}y_{p-1}$ is an allowed word of length $p$ in $\Sigma_\F$. This defines a conjugacy between $\Sigma_\F$ and a one-step subshift with symbols from the collection consisting of all the allowed words of length $p-1$ in $\Sigma_\F$. 

The \emph{adjacency matrix} of this one-step shift is defined as follows: let $A$ be a binary matrix with rows and columns indexed by all the allowed words of length $p-1$ with symbols from $\Sigma$. To avoid confusion, without loss of generality, we order the words in \textit{lexicographic or dictionary} order\footnote{For any two distinct words $x=x_1\dots x_m,y=y_1\dots y_m$ of same length, the lexicographic order $\prec$ is defined as $x\prec y$ if there exists $1\le k\le m$ such that $x_i=y_i$ for all $i=1,\dots,k-1$ and $x_k<y_k$.}. The $(x_1\dots x_{p-1})(y_1\dots y_{p-1})^{th}$ entry of $A$ is 1 if and only if the words $x_1\dots x_{p-1}$ and $y_1\dots y_{p-1}$ overlap progressively, that is, $x_2\dots x_{p-1}=y_1\dots y_{p-2}$, and the word $x_1\dots x_{p-1}y_{p-1}$ is an allowed word of length $p$ in $\Sigma_\F$. The sum of the entries of $A^n$ gives the number of allowed words of length $n+p-1$ in $\Sigma_\F$.

The conjugacy~\eqref{eq:conj} is relevant for $p\ge 3$. If $p=2$, $\Sigma_\F$ is itself a one-step shift and thus is conjugate to itself and its adjacency matrix has size $q$.  We say that $\Sigma_\F$ is an \emph{irreducible (primitive, respectively)} subshift of finite type if and only if the one-step shift to which it is conjugate (as defined above) is irreducible (primitive, respectively), that is, its adjacency matrix $A$ is irreducible (primitive, respectively). A subshift of finite type which is not irreducible is known as \emph{reducible}. For convenience, we will call $A$ the adjacency matrix of $\Sigma_\F$ as well. We emphasize again that $p\ge 2$ since otherwise the forbidden symbols can be removed from the symbols set $\Sigma$ altogether.

Let us consider the following examples of subshifts of finite type $\Sigma_\F$ (where $\F$ is a minimal collection, by definition). For $q=2$ and $\F=\{000,11\}$ ($p=3$), $\Sigma_\F$ is conjugate to a one-step shift with the symbols set as the collection of all the allowed words of length two in $\Sigma_\F$, via the block map~\eqref{eq:conj}. The allowed words of length two in $\Sigma_\F$ are given by $\{00, 01, 10\}$ (in lexicographical order), and the adjacency matrix is given by 
\[
A = 
\begin{blockarray}{cccc}
	&00 & 01 & 10 \\
	\begin{block}{c(ccc)}
		00 & 0 & 1 & 0  \\
		01 & 0 & 0 & 1  \\
		10 & 1 & 1 & 0  \\
	\end{block}
\end{blockarray}\ .
\] 
It is easy to check that the matrix $A$ is primitive, hence the subshift of finite type $\Sigma_\F$ is primitive. 

\subsection{Tools from combinatorics}
In this section, we discuss some tools used from combinatorics. Let $\F=\{a_1,\dots,a_s\}$ be a \textit{reduced} collection of words with symbols from $\Sigma$, that is, for any $i\ne j$, $a_i$ is not a subword of $a_j$. Note that $\F$ can contain words of different lengths here. A crucial observation which will be used extensively in this paper is that a minimal collection of words is also reduced. For each natural number $n$, let $f(n)$ denote the number of words of length $n$ with symbols from $\Sigma$ that do not contain any word from the collection $\F$. By convention, $f(0)=1$. 

It is well-known that the space $\Sigma_\F$ is a compact metric space and the left shift map $\sigma:\Sigma_\F\to\Sigma_\F$ defined as $\sigma(x_1x_2x_3\dots)=x_2x_3\dots$ is continuous. Also, if $\Sigma_\F$ is irreducible, the \textit{topological entropy} $h_{top}(\Sigma_{\F})$ of the map $\sigma$ on $\Sigma_\F$ is given by $\lim_{n \to \infty}(\ln  f(n))/n$, which equals $\ln\theta$, where $\theta$ is the Perron root of the adjacency matrix of the subshift $\Sigma_\F$, see~\cite{LM_book}. Define the \textit{generating function} $F(z)$ for $(f(n))_n$ as $F(z)=\sum_{n=0}^\infty f(n)z^{-n}$. 

In~\cite{Combinatorial}, Guibas and Odlyzko introduced the notion of correlation between two words (strings) which quantifies their overlap. Moreover, they gave a formula for the generating function $F(z)$ through a system of linear equations involving correlation between forbidden words. Their work serves as the basis for results that we present in this paper. 

We first give the definition of the correlation polynomial between two words which plays a key role in this work.

\begin{Def} \label{def:corr}
	Let $x$ and $y$ be two words of lengths $p_1$ and $p_2$, respectively, with symbols from $\Sigma$. The \textit{correlation polynomial} of $x$ and $y$ is defined as
	\[
	(x,y)_z = \sum_{\ell=0}^{p_1-1} b_\ell z^{p_1-1-\ell}, 
	\]
	where $b_{\ell}=1$, if and only if the overlapping parts of $x$ and $y$ are identical when the left-most symbol of $y$ is placed right below the $(\ell+1)^{th}$ symbol of $x$ (from the left) and $b_\ell=0$, otherwise. The polynomial $(x,x)_z$ is said to be the \textit{auto-correlation polynomial} of $x$, and when $x\neq y$, the polynomial $(x,y)_z$ is said to be the \textit{cross-correlation polynomial} of $x$ and $y$. 
\end{Def}

\begin{exam}
	To understand the concept of the correlation polynomial, let us consider the following example. Let $x=101001$ ($p_1=6$), $y=10010$ ($p_2=5$). Then
	\begin{center}
		\begin{tabular}{c c c c c c c c c c c }
			$\ell$&&	\textbf{1} &\textbf{0} &\textbf{1} & \textbf{0} & \textbf{0} & \textbf{1}  &&& $b_{\ell}$ \\ 
			0&&   1&0&0&1&0& &&&0\\
			1&&	  & 1 &0 & 0 & 1 &0  &&&0\\ 
			2&&	  &&\textbf{1}&\textbf{0}&\textbf{0}&\textbf{1} &&&1\\
			3&&	  &&&1&0&0  &&&0\\
			4&&	  &&&&1&0  &&&0\\
			5&&	  &&&&&\textbf{1} &&&1
		\end{tabular}.
	\end{center}
	We get $(x,y)_z=z^3+1$. Similarly $(y,x)_z=z$, $(x,x)_z=z^5+1$, $(y,y)_z=z^4+z$.
\end{exam}

The following result gives an expression for the generating function $F(z)$ in terms of the correlation between the forbidden words.\\

\begin{thm}~\cite[Theorem 1]{Combinatorial}\label{thm:formF}
	Let $\F=\{a_1,\dots,a_s\}$ be a reduced collection of words with symbols from $\Sigma$. Let $F(z)$, $F_i(z)$ denote the generating functions for $f(n)$ and $f_i(n)$, respectively, where $f(n)$ denotes the number of words of length $n$ with symbols from $\Sigma$ not containing any of the words from $\F$, and $f_i(n)$ denotes the number of words of length $n$ with symbols from $\Sigma$ not containing any of the words from $\F$ except a single appearance of $a_i$ at the end. Then $F(z)$, $F_i(z)$ satisfy the linear system of equations 
	\begin{eqnarray*}
		K(z)\begin{pmatrix}F(z) \\ F_1(z) \\ \vdots \\ F_s(z)\end{pmatrix} = \begin{pmatrix}z \\ 0 \\ \vdots \\ 0\end{pmatrix},
	\end{eqnarray*}
	where $K(z)=\begin{pmatrix}z-q & z \mathbbm{1}^T \\ \mathbbm{1} &-z\mathcal{M}(z)\end{pmatrix}$, $\mathcal{M}(z)=((a_j,a_i)_z)_{1\le i,j\le s}$ is the correlation matrix for the collection $\F$, $\mathbbm{1}$ denotes the column vector of size $s$ with all 1's. Hence, 
	\begin{eqnarray}\label{eq:solF}
		\begin{pmatrix}F(z) \\ F_1(z) \\ \vdots \\ F_s(z)\end{pmatrix} = K(z)^{-1}\begin{pmatrix}z \\ 0 \\ \vdots \\ 0\end{pmatrix} = \dfrac{1}{(z-q)+r(z)}\begin{pmatrix}z\\ \mathcal{M}(z)^{-1}\mathbbm{1}\end{pmatrix},
	\end{eqnarray}
	where $q$ is the size of $\Sigma$ and $r(z)$ is the sum of the entries of $\mathcal{M}(z)^{-1}$.
\end{thm}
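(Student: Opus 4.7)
The plan is to verify the matrix identity $K(z)\vec X = (z,0,\dots,0)^T$ row by row, which produces $s+1$ combinatorial identities on $f$ and $f_j$, and then to invert the resulting system by block elimination. The two kinds of rows correspond to two distinct ways of counting $\F$-free words.

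For the top row, $(z-q)F(z) + z\sum_j F_j(z) = z$, I would extend an arbitrary $\F$-free word of length $n$ by a single symbol from $\Sigma$. The extended word of length $n+1$ either still avoids $\F$ (contributing to $f(n+1)$) or ends with some $a_j \in \F$ (contributing to $f_j(n+1)$). Reducedness of $\F$ ensures \emph{at most one} $a_j$ can terminate at position $n+1$: if two did, the shorter would be a suffix, hence a subword, of the longer. This yields the recurrence $q f(n) = f(n+1) + \sum_j f_j(n+1)$ for every $n \geq 0$; multiplying by $z^{-n}$, summing, and using $f(0)=1$ together with $f_j(0) = 0$ converts it into the top row.

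For the $i$-th subsequent row, $F(z) = z \sum_j (a_j, a_i)_z F_j(z)$, the equivalent coefficient-level identity is $f(n) = \sum_{j,\ell} b_\ell^{(j,i)} f_j(n + p_j - \ell)$, where $b_\ell^{(j,i)}$ is the coefficient of $z^{p_j - 1 - \ell}$ in $(a_j, a_i)_z$ and $p_j$ denotes the length of $a_j$. I would prove this bijectively: given an $\F$-free word $u$ of length $n$, form $v = u \cdot a_i$; since $u$ is $\F$-free and $\F$ is reduced, every forbidden occurrence in $v$ is either the appended $a_i$ at position $n+1$ or straddles the boundary between $u$ and $a_i$. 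Let $a_{j^*}$ at position $k^*$ be the leftmost such occurrence, set $\ell^* = n - k^* + 1$, and send $u$ to the triple $(j^*, \ell^*, w)$, where $w$ is the prefix of $v$ of length $n + p_{j^*} - \ell^*$ terminating exactly at $a_{j^*}$; the overlap of $a_{j^*}$ with the leading part of $a_i$ is precisely the condition $b_{\ell^*}^{(j^*,i)} = 1$. The main obstacle will be checking that $w$ is counted by $f_{j^*}(n+p_{j^*}-\ell^*)$: any extra forbidden word $a_{j''}$ inside $w$ must start at some $k'' > k^*$ and end at or before $k^* + p_{j^*} - 1$, hence lies within the span of $a_{j^*}$ and is a subword of $a_{j^*}$, contradicting reducedness of $\F$ (the case $j'' = j^*$ is ruled out because two length-$p_{j^*}$ occurrences cannot both fit inside one length-$p_{j^*}$ window with $k'' > k^*$). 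A parallel reducedness argument---if the reconstructed $v$ had a forbidden word at a position strictly less than $k^*$, it would either sit inside $w$ violating the $f_j$ count or strictly contain $a_{j^*}$---gives the inverse map: given $(j,\ell,w)$ with $b_\ell^{(j,i)}=1$ and $w$ counted by $f_j(n+p_j-\ell)$, appending the suffix of $a_i$ of length $p_i - p_j + \ell$ to $w$ reconstructs $v$, whose length-$n$ prefix is the desired $u$.

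With all $s+1$ identities established, formula~\eqref{eq:solF} follows by block elimination. The bottom block $F(z)\mathbbm{1} = z \mathcal{M}(z) (F_1, \dots, F_s)^T$ gives $(F_1, \dots, F_s)^T = z^{-1} \mathcal{M}(z)^{-1} \mathbbm{1} \cdot F(z)$, and substituting into the top row yields $\bigl((z-q) + r(z)\bigr) F(z) = z$ with $r(z) = \mathbbm{1}^T \mathcal{M}(z)^{-1} \mathbbm{1}$, which is the sum of entries of $\mathcal{M}(z)^{-1}$.
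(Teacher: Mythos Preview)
The paper does not prove this theorem; it is quoted from Guibas and Odlyzko~\cite{Combinatorial} and used as a black box. Your argument is correct and is essentially the original Guibas--Odlyzko proof: the first recurrence $qf(n)=f(n+1)+\sum_j f_j(n+1)$ comes from extending an $\F$-free word by one symbol, and each of the remaining $s$ identities comes from appending $a_i$ to an $\F$-free word and locating the leftmost forbidden occurrence, with reducedness doing exactly the work you assign to it (uniqueness of the terminal forbidden suffix in the first recurrence, and the fact that the truncated word $w$ contains no extra forbidden subword in the second). The block-elimination step at the end is routine.
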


\noindent The following result is a straightforward consequence of Theorem~\ref{thm:formF}.

\begin{cor}\label{thm:formG}
	Let $\F=\{a_1,\dots,a_s\}$ be a reduced collection of words with symbols from $\Sigma$. Let $G_i(z)$ denote the generating function for $g_i(n)$, where $g_i(n)$ denotes the number of words of length $n$ with symbols from $\Sigma$ not containing any of the words from $\F$ except a single appearance of $a_i$ at the beginning. Then $F(z)$, $G_i(z)$ satisfy the linear system of equations 
	\begin{eqnarray*}
		L(z)\begin{pmatrix}F(z) \\ G_1(z) \\ \vdots \\ G_s(z)\end{pmatrix} = \begin{pmatrix}z \\ 0 \\ \vdots \\ 0\end{pmatrix},
	\end{eqnarray*}
	where $L(z)=\begin{pmatrix}z-q & z \mathbbm{1}^T \\ \mathbbm{1} &-z\mathcal{M}(z)^T\end{pmatrix}$. \\
	\noindent Consequently,
	\begin{eqnarray*}
		\begin{pmatrix}F(z) \\ G_1(z) \\ \vdots \\ G_s(z)\end{pmatrix} = L(z)^{-1}\begin{pmatrix}z \\ 0 \\ \vdots \\ 0\end{pmatrix} = \dfrac{1}{(z-q)+r(z)}\begin{pmatrix}z\\ (\mathcal{M}(z)^T)^{-1}\mathbbm{1}\end{pmatrix}.
	\end{eqnarray*}
\end{cor}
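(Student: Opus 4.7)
The plan is to dualize Theorem~\ref{thm:formF} by reversing words. Write $w^R$ for the reversal of a finite word $w$, and set $\F^R = \{a_1^R, \dots, a_s^R\}$. Since any subword relation among the $a_i^R$ reverses to one among the $a_i$, the collection $\F^R$ is again reduced, so Theorem~\ref{thm:formF} applies to it.

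First, via the bijection $w \mapsto w^R$ on words of length $n$, a word avoids $\F$ if and only if its reversal avoids $\F^R$, and has exactly one appearance of $a_i$ located at its beginning if and only if its reversal has exactly one appearance of $a_i^R$ located at its end. Thus $g_i(n)$ computed for $\F$ equals $f_i(n)$ computed for $\F^R$, so $G_i(z)$ for $\F$ coincides with the $F_i(z)$ produced by applying Theorem~\ref{thm:formF} to $\F^R$. The resulting linear system has the same block shape as in Theorem~\ref{thm:formF}, but with the correlation matrix $\mathcal{M}^R(z)$ of the reversed collection in place of $\mathcal{M}(z)$.

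The heart of the argument is then the identity $\mathcal{M}^R(z) = \mathcal{M}(z)^T$, equivalently $(a_j^R, a_i^R)_z = (a_i, a_j)_z$ for all $i, j$. I would establish this by pairing overlap-shifts: a nonzero monomial $z^{o-1}$ in $(a_i, a_j)_z$ records that the length-$o$ suffix of $a_i$ equals the length-$o$ prefix of $a_j$, where $o = |a_i| - \ell$. Reducedness ensures, for $i \neq j$, that $a_j$ cannot sit entirely inside $a_i$, so every contributing shift satisfies $o \le |a_j|$ and the pairing is exhaustive. Reversing both sides of the suffix--prefix equality is a tautology, and reinterpreting it yields that the length-$o$ suffix of $a_j^R$ equals the length-$o$ prefix of $a_i^R$, which is precisely the condition for the monomial $z^{o-1}$ to appear in $(a_j^R, a_i^R)_z$. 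The diagonal case $i = j$ is the manifest reversal symmetry of auto-correlations.

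With $\mathcal{M}^R(z) = \mathcal{M}(z)^T$, Theorem~\ref{thm:formF} applied to $\F^R$ yields exactly the system with coefficient matrix $L(z)$ in the statement. For the explicit inversion, one notes that $\mathbbm{1}^T (\mathcal{M}(z)^T)^{-1} \mathbbm{1}$ equals its own transpose $\mathbbm{1}^T \mathcal{M}(z)^{-1} \mathbbm{1} = r(z)$, so the denominator $(z-q) + r(z)$ from Theorem~\ref{thm:formF} persists unchanged and the closed form follows. The main obstacle is the careful bookkeeping in the correlation identity, where the reducedness hypothesis on $\F$ is indispensable.
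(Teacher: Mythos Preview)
Your proposal is correct and follows exactly the paper's own route: reverse the words, apply Theorem~\ref{thm:formF} to the reversed collection, and invoke the identity $(a_j^R,a_i^R)_z=(a_i,a_j)_z$ to replace the correlation matrix by its transpose. The paper compresses this into a one-line proof, whereas you spell out why the suffix--prefix reinterpretation of each contributing monomial requires the reducedness hypothesis; this elaboration is a genuine clarification but not a different argument.
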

\begin{proof}
	Replace each word $a_i$ by its reverse $\hat{a_i}$, also observe that $(\hat{a_i},\hat{a_j})_z=(a_j,a_i)_z$. Hence the result follows.  
\end{proof}

\section{Summary of main results} 
Let $\Sigma_\F$ be a subshift of finite type, where $\Sigma=\{0,1,\dots,q-1\}$ and $\mathcal{F}=\{a_1,\dots,a_s\}$ be a minimal collection of words with the longest word having length $p\ge 2$. Let $A$ be the adjacency matrix of $\Sigma_\F$, as defined in Section~\ref{subsec:sft}. Let $\theta$ be the Perron root of $A$. 

\begin{Not}
	In addition to the notations in place, we use these notations in the following statements: 
	\begin{itemize}
		\item $\mathcal{M}(z)=[(a_j,a_i)_z]_{1\le i,j\le s}$: the matrix function of correlation polynomials between the words in the collection $\F$ (see Definition~\ref{def:corr}).
		\item $r(z)$: the rational function given by the sum of the entries of $\mathcal{M}(z)^{-1}$. It is easy to check that $r(z)$ is a well-defined function since the determinant of the matrix function $\mathcal{M}(z)$ is a non-constant polynomial. 
		\item $\mathcal{R}_i(z)$ (resp. $\mathcal{C}_j(z)$): the rational function given by the sum of the entries of the $i^{th}$ row (resp. $j^{th}$ column) of $\mathcal{M}(z)^{-1}$.
		\item $\widetilde{a_i}$: the subword of $a_i$ obtained by removing the first symbol of $a_i$. 
	\end{itemize}
\end{Not}

\begin{thm*} (The Perron root of the adjacency matrix $A$)
	The Perron root $\theta$ is the largest positive real zero in modulus of the rational function $(z-q)+r(z)$. Moreover, there is no zero outside the closed disk centered at the origin with radius $\theta$. Further, if $\Sigma_\F$ is irreducible, then $\theta$ is a simple zero of the rational function $(z-q)+r(z)$.
\end{thm*}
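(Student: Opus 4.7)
The plan is to read off the pole structure of the generating function $F(z)$ from the Guibas--Odlyzko expression
\[
F(z) \;=\; \frac{z}{(z-q)+r(z)}
\]
of Theorem~\ref{thm:formF} and to match it against the spectral data of the adjacency matrix $A$. First, for every $n\ge p-1$ one has $f(n)=\mathbbm{1}^{T}A^{n-p+1}\mathbbm{1}$, so Gelfand's spectral radius formula gives $\limsup_{n}f(n)^{1/n}=\theta$. Hence the Laurent series $\sum_{n\ge 0}f(n)z^{-n}$ defining $F$ converges absolutely and is analytic on $\{|z|>\theta\}$. Since $F$ is rational and analytic outside this closed disk, every zero of the denominator $(z-q)+r(z)$ must lie in $\{|z|\le\theta\}$, which is the ``no zero outside'' claim.

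Next, I would invoke Pringsheim's theorem: since the coefficients $f(n)$ are non-negative, the boundary point $z=\theta$ on the positive real axis is necessarily a genuine singularity of $F$. As $F$ is rational, this singularity is a pole of positive order, and because the numerator $z$ does not vanish at $\theta>0$, this forces $(z-q)+r(z)$ to vanish at $z=\theta$. Combining the two steps, $\theta$ is the largest positive real zero in modulus of $(z-q)+r(z)$.

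For the simplicity assertion when $\Sigma_{\F}$ is irreducible, I would apply Perron--Frobenius to the irreducible matrix $A$: the eigenvalue $\theta$ is simple, as are the other eigenvalues on the circle $|z|=\theta$ that come from the period. The Jordan decomposition of $A$ then gives
\[
f(n) \;=\; c\,\theta^{n}+\sum_{|\lambda_{k}|=\theta,\;\lambda_{k}\neq\theta}c_{k}\lambda_{k}^{n}+O(\rho^{n}),\qquad 0<\rho<\theta,
\]
with $c>0$. Summing the geometric series term by term yields $F(z)=cz/(z-\theta)+H(z)$ with $H$ analytic in a neighborhood of $\theta$, so $\theta$ is a simple pole of $F$. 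The numerator $z$ being non-zero at $\theta$ rules out cancellation, and therefore $\theta$ is a simple zero of the rational function $(z-q)+r(z)$.

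The main obstacle is the bookkeeping between the combinatorial and spectral sides: $A$ is indexed by allowed $(p-1)$-blocks, so one must carefully justify the identity $f(n)=\mathbbm{1}^{T}A^{n-p+1}\mathbbm{1}$ for $n\ge p-1$ and absorb the finitely many initial terms $f(0),\dots,f(p-2)$, which contribute only a polynomial (in $z^{-1}$) correction to $F$ and therefore do not alter either the radius of convergence or the order of the dominant pole. One should also verify carefully that the Jordan expansion for $f(n)$ remains valid in the irreducible but non-primitive case, where $\theta$ is accompanied by other eigenvalues on the spectral circle, in order to make the passage ``$\theta$ simple eigenvalue of $A$ $\Rightarrow$ $\theta$ simple zero of $(z-q)+r(z)$'' rigorous.
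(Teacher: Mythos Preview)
Your proposal is correct and lands on the same conclusions, but the organization differs from the paper's in a couple of instructive ways.

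Both arguments rest on the identity $f(n)=\mathbbm{1}^{T}A^{n-p+1}\mathbbm{1}$ for $n\ge p-1$ and on the spectral data of $A$. Where you invoke Gelfand's formula plus Pringsheim's theorem to place all poles of $F$ in $\{|z|\le\theta\}$ and to force $z=\theta$ itself to be a singularity, the paper instead writes down the resolvent identity
\[
F(z)=z^{2-p}\,\mathbbm{1}^{T}(zI-A)^{-1}\mathbbm{1}+\text{(polynomial in }z^{-1}\text{)}
\]
and reads off analyticity on $|z|>\theta$ from the fact that $(zI-A)^{-1}$ is analytic there; to show $\theta$ is actually a pole it runs a Jordan-form argument along the subsequence $n_\ell=P\ell$ to force $\sum f(n)\theta^{-n}$ to diverge. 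Your Pringsheim step is a clean one-line replacement for that subsequence computation. For the simplicity of the pole in the irreducible case, you expand $f(n)=c\,\theta^{n}+\cdots$ with $c>0$ coming from the positivity of the Perron eigenvectors and sum term by term; the paper instead stays with the resolvent and uses that $\theta$ is a simple root of $\det(zI-A)$, so $(z-\theta)(zI-A)^{-1}$, and hence $(z-\theta)F(z)$, is analytic at $\theta$. The resolvent route makes the simplicity step essentially immediate, while your route requires the (easy but not entirely trivial) verification that the leading coefficient $c$ is strictly positive; conversely, your power-series viewpoint avoids having to write down and justify the resolvent identity at all.

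The caveats you flag in your last paragraph are exactly the right ones, and both are routine: the finitely many initial coefficients $f(0),\dots,f(p-2)$ only shift $F$ by a polynomial in $z^{-1}$, and in the irreducible non-primitive case the other eigenvalues on the circle $|z|=\theta$ are simple and distinct from $\theta$, so their geometric-series contributions $c_k z/(z-\lambda_k)$ are analytic at $\theta$ and do not affect the order of the pole there.
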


\begin{thm*}
	(Left and right normalized eigenvectors of the adjacency matrix $A$ corresponding to the Perron root $\theta$) Let $\Sigma_\F$ be irreducible. Let $v=(v_x)_x$ and $u=(u_x)_x$ (indexed by allowed words of length $p-1$) be the vectors defined as follows. For an allowed word $x$ of length $p-1$, 
	\[
	u_x=1-\sum_{i=1}^s \mathcal{R}_i(\theta) (\widetilde{a_i},x)_\theta, \ v_x=1-\sum_{j=1}^s\mathcal{C}_j(\theta)(x,a_j)_\theta,
	\] 
	where $(x,y)_\theta$ is $(x,y)_z$ evaluated at $z=\theta$.
	For each word $x$, $u_xv_x>0$. Moreover, $v$ and $u$ are right and left Perron eigenvectors, respectively, of $A$.\\
	Further, if $\theta>1$, the normalization factor for these Perron eigenvectors is given by 
	\[u^Tv =\theta^{p-1}\left(1+r'(\theta)\right).
	\]
\end{thm*}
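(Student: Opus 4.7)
The plan is to prove both claims by first computing, through an enhanced version of Theorem~\ref{thm:formF}, explicit closed forms for the generating functions that count allowed words with a prescribed length-$(p-1)$ prefix (resp.\ suffix), then extracting $v$ and $u$ as residues at $z=\theta$, and finally comparing two expressions for $\sum_{n\geq 0}f(n+p-1)z^{-n}$ to pin down the normalization $u^Tv$.

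For each allowed word $x$ of length $p-1$, let $g_x(n)$ count allowed length-$n$ words in $\Sigma_\F$ starting with $x$, and $g_{x,i}(n)$ count those starting with $x$ that avoid every forbidden word except a single copy of $a_i$ at the end. Running the two-part Guibas--Odlyzko argument of Theorem~\ref{thm:formF} in this constrained setting produces a linear system on $G_x(z),G_{x,1}(z),\dots,G_{x,s}(z)$ with the same coefficient matrix $K(z)$ as before, but with right-hand side $\bigl(z^{2-p},\,-z^{2-p}(x,a_1)_z,\dots,-z^{2-p}(x,a_s)_z\bigr)^T$. Inverting $K(z)$ exactly as in \eqref{eq:solF} gives the closed form
\[
G_x(z)\;=\;\frac{z^{2-p}\bigl(1-\sum_{j=1}^{s}\mathcal{C}_j(z)(x,a_j)_z\bigr)}{(z-q)+r(z)},
\]
whose numerator at $z=\theta$ is exactly $v_x$. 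A parallel computation using Corollary~\ref{thm:formG} --- with each $a_i$ replaced by its reverse $\hat{a_i}$, so that overlaps of $a_i$ with $x$ as a prefix produce the correlation $(\widetilde{a_i},x)_z$ --- yields the analogous formula for $H_x(z)$, the generating function of words ending with $x$, whose numerator at $\theta$ is exactly $u_x$.

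Since $\theta$ is a simple zero of $(z-q)+r(z)$ by the preceding theorem, each $G_x$ and $H_x$ has a simple pole at $\theta$. The eigenvector identities then follow from the one-step recursion $g_x(n)=\sum_{y:A_{xy}=1}g_y(n-1)$ for $n\geq p$ together with $g_x(p-1)=1$, which translate to the matrix identity $(zI-A)\mathbf{G}(z)=z^{2-p}\mathbbm{1}$ where $\mathbf{G}(z)=(G_x(z))_x$. Taking residues at $\theta$ gives $(\theta I-A)\cdot\mathrm{res}_{\theta}\mathbf{G}=0$, so $Av=\theta v$; the reverse recursion handles $u^TA=\theta u^T$. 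Strict positivity $u_xv_x>0$ follows because the residues are non-negative (dominant real singularities of series with non-negative coefficients), and irreducibility of $A$ rules out zero entries.

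For the normalization, I would compare two expressions for $\sum_{n\geq 0}f(n+p-1)z^{-n}$. Summing the closed forms for $G_x(z)$ over allowed $x$ and comparing with $F(z)$ (the two differ only by a polynomial in $z^{-1}$) gives, after reading off residues at $\theta$, the identity $\sum_x v_x=\theta^{p-1}$ and symmetrically $\sum_y u_y=\theta^{p-1}$. On the other hand, the same series equals both $z\,\mathbbm{1}^T(zI-A)^{-1}\mathbbm{1}$, whose residue at $\theta$ is $\theta(\sum_x v_x)(\sum_y u_y)/(u^Tv)$, and $z^{p-1}\bigl(F(z)-\sum_{m=0}^{p-2}f(m)z^{-m}\bigr)$, whose residue at $\theta$ is $\theta^p/(1+r'(\theta))$. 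Equating these and using $\theta>1$ only to legitimize the residue calculus yields $u^Tv=\theta^{p-1}(1+r'(\theta))$. The main obstacle will be the first stage: setting up the second family of cluster-overlap equations with $x$ forced at the start and verifying that the correction term is exactly $-z^{2-p}(x,a_i)_z$; the corresponding identification of $\widetilde{a_i}$ on the $H_x$ side, as the portion of $a_i$ that overlaps with the constrained suffix $x$, is the most delicate combinatorial bookkeeping in the proof.
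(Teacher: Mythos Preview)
Your approach is correct and genuinely different from the paper's. The paper establishes the eigenvector identity $Av=\theta v$ by a bare-hands verification (Theorem~\ref{thm:eig_vec}): for each allowed $(p-1)$-word $x$ it splits into the cases $B=\Sigma$ and $B\subsetneq\Sigma$ and checks $\sum_{\beta\in B}v_{x_\beta}=\theta v_x$ via the correlation identities~\eqref{eq:corr}--\eqref{eq:corr1}. Positivity $u_xv_y>0$ is obtained separately (Lemma~\ref{lemma:mu_alt_nonzero}) through the block-determinant expression for $r_w-r$. Most strikingly, the normalization $u^Tv=\theta^{p-1}(1+r'(\theta))$ is proved in Theorem~\ref{thm:eig_prod} by importing the Ferguson--Pollicott local escape rate formula (Theorem~\ref{thm:local_erate}) and tracking a carefully chosen eventually periodic sequence $\alpha$. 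Your route---build the prefix-constrained Guibas--Odlyzko system for $G_x$, read off $v_x$ as a residue, and then equate the residues of $z\mathbbm{1}^T(zI-A)^{-1}\mathbbm{1}$ and $z^{p-1}F(z)$ at the simple pole $\theta$---bypasses the escape-rate machinery entirely and is considerably more elementary for the normalization step; in fact nothing in your residue comparison genuinely requires $\theta>1$. Your identification of the correction term $-z^{2-p}(x,a_i)_z$ is correct: for $0\le n\le p-2$ the ``phantom'' triples $(w',j,\ell)$ biject with the single word $x_1\cdots x_n a_i$, which starts with $x$ precisely when the $n$-th coefficient of $(x,a_i)_z$ is $1$. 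On the $H_x$ side one must use that $x$ is allowed (so no $a_i$ sits inside $x$) to see that $(\hat{x},\hat{a_i})_z=(\widetilde{a_i},x)_z$ rather than $(a_i,x)_z$; this is exactly the bookkeeping you flagged, and it goes through. What the paper's approach buys is that it never needs to set up or solve the enlarged linear system, and its positivity argument yields the product $u_xv_y>0$ directly without first knowing each factor is an eigenvector; what your approach buys is a uniform generating-function mechanism that makes the normalization fall out of a one-line residue identity instead of an appeal to ergodic theory.
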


There are three main components of the results: description of the Perron root $\theta$ (Theorem~\ref{thm:same_roots}), expressions for the right and left eigenvectors (not necessarily normalized) $v$ and $u$ (Theorem~\ref{thm:eig_vec}), and an expression for the normalization factor $u^Tv$ (Theorem~\ref{thm:eig_prod}). Now we describe two applications of our main results. These applications will be proved in Section~\ref{sec:app}.

\subsection*{Application to graph theory}

Let $\mathcal{G}$ be a directed graph with $n$ vertices and having adjacency matrix $A$. We assume that there is at most one edge between any pair of vertices. Using the results discussed above, we obtain an expression for the Perron root and Perron eigenvectors of $A$. 

\begin{thm*}
	Let $A=[A_{xy}]_{1\le x,y\le n}$ be a $(0,1)$ irreducible matrix of size $n$ with Perron root $\theta$. Let $\F=\{xy \ \vert\ A_{xy}=0,\ 1\le x,y\le n\}$, labelled as $\{a_1,\dots, a_s\}$. Let $u=(u_x)_{1\le x\le n}$
	and $v=(v_x)_{1\le x\le n}$ be the vectors defined as 	
	\[
	u_x=1-\sum\limits_{\substack{i=1\\a_i\text{ ends with }x}}^s \mathcal{R}_i(\theta), \ \ v_x=1-\sum\limits_{\substack{i=1\\a_i\text{ begins with }x}}^s \mathcal{C}_i(\theta).
	\]
	Then for each $x$, $u_xv_x>0$. Moreover, $v$ and $u$ are right and left Perron eigenvectors, respectively, of $A$. \\
	Further, if $\theta>1$, the normalization factor for these Perron eigenvectors is given by 
	\[u^Tv =\theta(1+r'(\theta)).
	\]
\end{thm*}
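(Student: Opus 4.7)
The plan is to apply the main theorems (Theorem~\ref{thm:eig_vec} and Theorem~\ref{thm:eig_prod}) directly to the one-step subshift of finite type naturally associated with the directed graph $\mathcal{G}$, and then to simplify the correlation polynomials appearing in the general formulas using that all forbidden words now have length exactly two.

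First I would set $\Sigma = \{1, 2, \ldots, n\}$ and form $\Sigma_\F$. Because $A$ is a $(0,1)$ matrix and $\F$ consists of all pairs $xy$ with $A_{xy} = 0$, every word of $\F$ has length two; moreover $\F$ contains no single symbols (since the rows/columns of an irreducible matrix are nonzero, every vertex participates in some edge, and in any case we follow the standing convention to discard forbidden symbols from $\Sigma$), so $\F$ is minimal. Hence $p = 2$, the allowed words of length $p-1 = 1$ are exactly the vertices $1, \ldots, n$, and according to Section~\ref{subsec:sft} the adjacency matrix of $\Sigma_\F$ is precisely the given matrix $A$. Irreducibility of $A$ therefore translates into irreducibility of $\Sigma_\F$, putting us in the setting of the main theorems.

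Next I would evaluate the correlation polynomials appearing in Theorem~\ref{thm:eig_vec} in this specialized setting. For each $a_i = b_i c_i \in \F$, the tail $\widetilde{a_i} = c_i$ is a single symbol. By Definition~\ref{def:corr}, for words of lengths $p_1 = 1$ and $p_2 \in \{1,2\}$ the correlation polynomial reduces to the single term $b_0 z^0 = b_0$, and $b_0 = 1$ precisely when the first symbol of the second word coincides with the (only) symbol of the first. Consequently $(\widetilde{a_i}, x)_\theta = 1$ iff $a_i$ ends with $x$ and is $0$ otherwise, and $(x, a_j)_\theta = 1$ iff $a_j$ begins with $x$ and is $0$ otherwise. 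Plugging these indicators into the expressions for $u_x$ and $v_x$ from Theorem~\ref{thm:eig_vec} collapses the sums over $i$ (respectively $j$) to the sub-sums over forbidden words ending (respectively beginning) with $x$, giving exactly the formulas in the statement. The positivity claim $u_x v_x > 0$ is inherited directly from Theorem~\ref{thm:eig_vec}.

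Finally, the normalization factor follows by substituting $p - 1 = 1$ into the formula $u^T v = \theta^{p-1}(1 + r'(\theta))$ from Theorem~\ref{thm:eig_prod}, yielding $u^T v = \theta(1 + r'(\theta))$. There is no genuine obstacle here: the proof is essentially bookkeeping and the one point that needs care is the asymmetry in Definition~\ref{def:corr} (the degree of the correlation polynomial depends on the length of the \emph{first} argument), which must be matched correctly against the indices $i$ and $j$ in the statement of Theorem~\ref{thm:eig_vec} when collapsing $(\widetilde{a_i}, x)_\theta$ and $(x, a_j)_\theta$.
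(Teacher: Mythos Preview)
Your proposal is correct and follows essentially the same approach as the paper: apply Theorems~\ref{thm:eig_vec} and~\ref{thm:eig_prod} with $p=2$, and observe that for single-symbol first arguments the correlation polynomials $(\widetilde{a_i},x)_\theta$ and $(x,a_j)_\theta$ collapse to the indicators of ``$a_i$ ends with $x$'' and ``$a_j$ begins with $x$'' respectively. Your write-up is in fact more careful than the paper's one-line proof, spelling out why $\F$ is minimal and why the adjacency matrix of $\Sigma_\F$ coincides with $A$; the only minor quibble is that the positivity $u_xv_x>0$ is not literally part of the statement of Theorem~\ref{thm:eig_vec} but comes from Lemma~\ref{lemma:mu_alt_nonzero} (via the remark preceding Theorem~\ref{thm:eig_vec}).
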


\begin{rems}
	1) In the preceding result, since each $a_j$ has length two, the correlation matrix $\mathcal{M}(z)$ of size $s$ has linear polynomials $z+\alpha$ on the diagonal with $\alpha$ either 0 or 1, and either 0 or 1 on the off-diagonal. More precisely, $(a_j,a_j)_z=z+1$ if and only if $a_j=uu$, for some $1\le u\le n$, and for $j\ne k$, $(a_j,a_k)_z=1$ if and only if $a_j=uv$ and $a_k$ begins with $v$, for some $1\le u,v\le n$.  \\
	2) By the Perron-Frobenius theorem and the preceding result, for a primitive matrix with $\theta>1$, we obtain an estimate for the number of paths $(A^k)_{xy}$ of length $k$ from the vertex $x$ to $y$ in the graph $\mathcal{G}$. The estimate is given by
	\[
	(A^k)_{xy}\ \sim \ \theta^{k-1}\dfrac{\left(1-\sum\limits_{\substack{i=1\\a_i\text{ begins with }x}}^s \mathcal{C}_i(\theta)\right)\left(1-\sum\limits_{\substack{i=1\\a_i\text{ ends with }y}}^s \mathcal{R}_i(\theta)\right)}{1+r'(\theta)}.
	\]
\end{rems}

\subsection*{Application to ergodic theory}
The Perron-Frobenius theorem for irreducible matrices have played a crucial role in several areas of mathematics including ergodic theory. In~\cite{Parry64}, Parry showed the existence and uniqueness of a measure of maximal entropy for irreducible subshifts of finite type using the Perron-Frobenius Theorem. This measure is now called the \textit{Parry measure}, which we will now define. 

Let $\Sigma_\F$ be an irreducible subshift of finite type, where $\F$ is a non-empty minimal collection of words with the longest word having length $p\ge 2$. Let $w=w_1\dots w_n$ be an \emph{allowed word} in $\Sigma_{\F}$ (that is, it does not contain any word from the collection $\F$ as a subword) and let 
\[
C_w=\{x_1x_2\dots\in\Sigma_{\F}\ \vert\ x_1=w_1,x_2=w_2,\dots, x_n=w_n\},
\]
denote the \textit{cylinder based at the word $w$}. Then we obtain a probability measure space with set $\Sigma_{\F}$, $\sigma$-algebra generated by the cylinders based at all allowed words of finite length, and the measure $\mu$. The measure $\mu$ is the pull-back of the \textit{Parry measure} on the one-step shift via the conjugacy~\eqref{eq:conj}. 
The measure $\mu$ will be called the Parry measure on $\Sigma_\F$ as well. It is defined as follows: for every allowed word $w=w_1\dots w_n$ ($n\ge p$),
\begin{eqnarray}\label{eq:pmold}
	\mu(C_w)&=&\frac{U_{w_1\dots w_{p-1}}V_{w_{n-p+2}\dots w_n}}{\theta^{n-p+1}},
\end{eqnarray}
where $\theta>0$ is the Perron root of $A$, $V$ and $U$ are normalized right and left (column) Perron eigenvectors, respectively, such that $U^TV=1$. For any allowed word $w$ of length $n$ ($1\le n<p$), $\mu(C_w)$ can be computed using the fact that $C_w$ is a union of all the disjoint cylinders $C_{w'}$ with $w'$ an allowed word of length $p$ that starts with $w$. Note that $1\le\theta\le q$ since each row/column of $A$ has at most $q$ 1's and at least one 1 if $A$ is irreducible. The Parry measure $\mu$ has the following properties.

\begin{itemize}
	\item In the case of the full shift ($\F=\emptyset$), the Parry measure $\mu$ is the uniform (Bernoulli) probability measure on $\Sigma^\mathbb{N}$. The cylinder $C_w$ is the collection of all sequences beginning with the word $w$ of length $n\ge 1$ with symbols from $\Sigma$, with $\mu(C_w)=1/q^n$. 
	\item The left shift map $\sigma:\Sigma_\F\rightarrow \Sigma_{\F}$ is measure-preserving and ergodic with respect to $\mu$.
	\item If $\F\ne\emptyset$, it is immediate from~\eqref{eq:pmold} that two cylinders based at words of identical length need not have the same measure. But the measure of all the cylinders based at words of identical length $n$ with the same starting $(p-1)$-word and the same ending $(p-1)$-word is the same. The number of such words is determined by $A_{(w_1\dots w_{p-1})(w_{n-p+2}\dots w_n)}^{n-p+1}$, the $(w_1\dots w_{p-1})(w_{n-p+2}\dots w_n)^{th}$ entry of $A^{n-p+1}$.
\end{itemize}

An immediate consequence of the previously stated results is an alternate definition of the Parry measure. In the following result, $\Sigma_\F$ is irreducible with positive topological entropy, that is, the Perron root satisfies $\theta>1$.

\begin{thm*}
	Let $w$ be an allowed word of length $n\ge p$ in $\Sigma_\mathcal{F}$ which starts with a word $x$ of length $p-1$ and ends with a word y of length $p-1$. Then  
	\begin{eqnarray*}
		\mu(C_w)&=&\dfrac{\left(1-\sum\limits_{i=1}^s \mathcal{R}_i(\theta) (\widetilde{a_i},x)_\theta\right)\left(1-\sum\limits_{j=1}^s \mathcal{C}_j(\theta) (y,a_j)_\theta\right)}{\theta^{n} \left(1+r'(\theta)\right)},\hspace{.3in}
	\end{eqnarray*}
	where $C_w$ is the cylinder based at word $w$ in $\Sigma_\F$.
\end{thm*}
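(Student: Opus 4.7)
The plan is to read off the stated formula directly from the classical expression
\[
\mu(C_w) \;=\; \frac{U_{w_1\dots w_{p-1}}\, V_{w_{n-p+2}\dots w_n}}{\theta^{n-p+1}}
\]
given in \eqref{eq:pmold}, by substituting the explicit right and left Perron eigenvectors supplied by the theorem on left and right normalized eigenvectors, and then accounting for the normalization $U^T V = 1$ using the inner product formula from the theorem on the normalization factor. The key observation is that the right-hand side of \eqref{eq:pmold} depends only on the product $U_x V_y$, so if $v$ and $u$ are \emph{any} right and left Perron eigenvectors of $A$ with $u^T v = c > 0$, then the normalized eigenvectors $V = v/\alpha$ and $U = u/\beta$ (with $\alpha\beta = c$) satisfy $U_x V_y = u_x v_y / c$ regardless of how $\alpha$ and $\beta$ are individually chosen.

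First, I would recall that the hypothesis $\theta > 1$ together with the irreducibility of $\Sigma_\F$ places us squarely in the setting of the two theorems referenced above. By the theorem describing the left and right eigenvectors, the vectors $u$ and $v$ with
\[
u_x \;=\; 1-\sum_{i=1}^s \mathcal{R}_i(\theta)(\widetilde{a_i},x)_\theta, \qquad v_y \;=\; 1-\sum_{j=1}^s \mathcal{C}_j(\theta)(y,a_j)_\theta
\]
are, respectively, left and right Perron eigenvectors of the adjacency matrix $A$. By the theorem on the normalization factor, we have $u^T v = \theta^{p-1}\bigl(1+r'(\theta)\bigr)$, and this quantity is strictly positive since $u_xv_x>0$ for every $x$.

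Next, I would apply \eqref{eq:pmold} to the word $w$ of length $n \geq p$ starting with $x=w_1\dots w_{p-1}$ and ending with $y=w_{n-p+2}\dots w_n$. Using the normalization observation above with $c = \theta^{p-1}(1+r'(\theta))$,
\[
U_{x}\, V_{y} \;=\; \frac{u_x\, v_y}{u^T v} \;=\; \frac{u_x\, v_y}{\theta^{p-1}\bigl(1+r'(\theta)\bigr)}.
\]
Plugging this into \eqref{eq:pmold} and combining the $\theta$-powers yields
\[
\mu(C_w) \;=\; \frac{u_x\, v_y}{\theta^{p-1}\bigl(1+r'(\theta)\bigr)\,\theta^{n-p+1}} \;=\; \frac{u_x\, v_y}{\theta^{n}\bigl(1+r'(\theta)\bigr)},
\]
and substituting the explicit formulas for $u_x$ and $v_y$ gives exactly the claimed identity.

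There is not really a serious obstacle here, since both heavy-lifting ingredients (the eigenvector formulas and the inner product computation) are already in place; the proof is a one-line substitution once the Parry-measure formula is cast in terms of the product $U_xV_y$. The only minor subtlety worth flagging is that \eqref{eq:pmold} is stated in terms of Perron eigenvectors of the adjacency matrix of the \emph{one-step} recoding, while $u$ and $v$ are indexed by allowed words of length $p-1$; this matches exactly because the adjacency matrix $A$ in the statement \emph{is} the adjacency matrix of that recoding, so no additional conjugacy bookkeeping is needed beyond identifying $w$ with its image under the sliding block map \eqref{eq:conj}.
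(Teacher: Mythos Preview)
Your proposal is correct and matches the paper's own proof essentially line for line: the paper simply writes ``By~\eqref{eq:pmold}, $\mu(C_w)=U_xV_y/\theta^{n-p+1}$, use Theorem~\ref{thm:eig_prod},'' which is exactly your substitution of the explicit eigenvectors and the normalization factor into the classical Parry formula. There is nothing to add or correct.
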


The term on the right in the above expression is well-defined, which will be proved in due course.

\section{The Perron root}
In this section, we prove that the Perron root $\theta$ of the subshift $\Sigma_\F$ is the largest positive real (simple) pole of the generating function $F(z)$. Consequently from Theorem~\ref{thm:formF}, $\theta$ is the largest zero of the rational function $z-q+r(z)$.

\begin{thm}\label{thm:same_roots}
	The (rational) generating function $F(z)$ is analytic outside the closed disk centered at the origin with radius $\theta$, the Perron root of the adjacency matrix for the subshift $\Sigma_{\F}$. Moreover, $\theta$ is a pole of $F$. The pole $\theta$ is simple if $\Sigma_{\F}$ is irreducible. 
\end{thm}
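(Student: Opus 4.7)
The plan is to realize $F$ as a matrix-resolvent expression in the adjacency matrix $A$ and then read its singularity structure off Perron--Frobenius theory for $A$. Using the sliding block map of Section~\ref{subsec:sft}, allowed words of length $n+p-1$ in $\Sigma_\F$ correspond bijectively to length-$n$ paths in the graph with adjacency matrix $A$, so that for every $n\ge 0$,
\[
f(n+p-1)=\mathbbm{1}^T A^n\mathbbm{1},
\]
where $\mathbbm{1}$ is the all-ones column vector indexed by allowed $(p-1)$-words. For $|z|>\theta$ the Neumann series $\sum_{n\ge 0}A^n z^{-n}=z(zI-A)^{-1}$ converges absolutely, which yields the representation
\[
F(z)=\sum_{n=0}^{p-2}f(n)z^{-n}+z^{-(p-2)}\,\mathbbm{1}^T(zI-A)^{-1}\mathbbm{1}.
\]
The right-hand side is a rational function whose only possible poles lie in the spectrum of $A$, hence inside the closed disk of radius $\theta$. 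This establishes analyticity of $F$ outside that disk.

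Next I would show that $\theta$ is genuinely a pole. Writing $F(w^{-1})=\sum_{n\ge 0}f(n)w^n$ gives a power series with non-negative coefficients. I claim its radius of convergence equals $1/\theta$: the upper bound $f(n+p-1)=\mathbbm{1}^T A^n\mathbbm{1}\le C(\theta+\varepsilon)^n$ comes from Gelfand's spectral-radius formula applied to $A$, while the matching lower bound follows from $\mathbbm{1}^T A^n\mathbbm{1}\ge \|A^n\|_{\infty}\ge \theta^n/s$ (with $s$ the size of $A$; here I use that $\theta$ is an eigenvalue of $A^n$, so any operator norm of $A^n$ is at least $\theta^n$, and the entrywise maximum dominates that norm up to a factor of $s$). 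By the Vivanti--Pringsheim theorem, $w=1/\theta$ is a singular point of this series, hence $z=\theta$ is a singular point of $F$. Since $F$ is rational by Theorem~\ref{thm:formF}, this singularity must be a pole.

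Finally, assume $\Sigma_\F$ is irreducible, so that $A$ is irreducible. By Perron--Frobenius $\theta$ is a simple eigenvalue of $A$, so $(zI-A)^{-1}$ has at most a simple pole at $\theta$, and the representation of the first paragraph forces $F$ to have a pole of order at most one at $\theta$; combined with the preceding paragraph the pole is simple. One can also verify this directly: with $V,U$ the strictly positive right and left Perron eigenvectors, the spectral projector at $\theta$ is $VU^T/(U^T V)$, whence
\[
\mathrm{Res}_{z=\theta}F(z)=\theta^{-(p-2)}\,\frac{(\mathbbm{1}^T V)(U^T\mathbbm{1})}{U^T V}>0.
\]
The step I expect to be the main obstacle is pole existence in the reducible case: without a strictly positive Perron vector a direct residue computation can be delicate (Jordan structure at $\theta$ is possible, and the pairing $\mathbbm{1}^T(\cdot)\mathbbm{1}$ with the corresponding generalized spectral projection is not automatically positive), which is precisely why I would route that step through Vivanti--Pringsheim together with rationality of $F$ rather than through a spectral-projection calculation.
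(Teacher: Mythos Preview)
Your argument is correct and shares its backbone with the paper: both express $F$ through the resolvent $\mathbbm{1}^T(zI-A)^{-1}\mathbbm{1}$ (plus the finite tail $\sum_{n<p-1}f(n)z^{-n}$), deduce analyticity outside $\{|z|\le\theta\}$ from the location of $\mathrm{spec}(A)$, and use Perron--Frobenius simplicity of $\theta$ to get a simple pole in the irreducible case. The genuine difference is in the pole-existence step. The paper argues via the Jordan decomposition of $A$: writing $f(n)=(c_1\omega^n+\cdots+c_P\omega^{Pn})\theta^n+O(n^\alpha\lambda^n)$ with $\lambda<\theta$ and $\omega$ a primitive $P$-th root of unity, it passes to the subsequence $n_\ell=P\ell$ to force $\sum f(n)\theta^{-n}=\infty$. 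Your route through Vivanti--Pringsheim is slicker and needs only non-negativity of $f(n)$ together with the spectral-radius growth bounds; in particular it handles the reducible case uniformly, whereas the paper has to remark separately that its Steps~1--2 extend to reducible $A$. The paper's approach, in return, is more explicit about the asymptotics of $f(n)$ and avoids invoking an external complex-analysis theorem. One small comment: your use of $\|A^n\|_\infty$ for the entrywise maximum is non-standard; if you instead take the $\ell^\infty$-operator norm (maximum row sum) you get the cleaner chain $\mathbbm{1}^TA^n\mathbbm{1}\ge\|A^n\|_{\mathrm{op},\infty}\ge\rho(A^n)=\theta^n$ with no factor of $1/s$, though either version suffices for the radius-of-convergence computation.
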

\begin{proof}
	Let $\Sigma_{\F}$ be an irreducible subshift where $\F$ is a minimal collection with the longest word having length $p\ge 2$. Let $A$ be the adjacency matrix of $\Sigma_\F$.\\
	\textbf{Step 1}: The eigenvalues of $A$ with modulus $\theta$ are $\theta \omega,\dots,\theta \omega^P$, where $P\ge 1$ is the period of $A$ and $\omega\ne 1$ is a $P^{th}$ root of unity. By the (complex) Jordan decomposition of $A$, we get $f(n)=\left(c_1\omega^n+\dots+c_{P}\omega^{Pn}\right)\theta^n+e(n)$, where $e(n)=O(n^\alpha\lambda^n)$ for some $0<\lambda<\theta$ and integer $\alpha\ge 0$. For each $\ell\in\mathbb{N}$, let $n_\ell=P\ell$. Then $f(n_\ell)=C\theta^{n_\ell}+e(n_\ell)$, where $C=c_1+c_2+\dots+c_{P}$. The constant $C$ is non-zero since otherwise $f(n_\ell)=e(n_\ell)$ and hence  $\limsup_{n\rightarrow \infty} e(n)^{1/n}=\theta$ which is a contradiction. Therefore $\lim_{\ell\rightarrow \infty}f(n_\ell)\theta^{-n_\ell}\rightarrow C\ne 0$. Hence the series $\sum_{n=0}^\infty f(n)\theta^{-n}$ diverges. Hence $\theta$ is a pole of $F$. \\ 
	\textbf{Step 2}: Using Neumann series expansion, for $|z|>\theta$, 
	\[
	R(z)=(zI-A)^{-1}=\sum_{n=0}^\infty \frac{A^n}{z^{n+1}},
	\] 
	where $R(z)$ is the resolvent of $A$. Since $\sum_{i,j}A^n_{i,j}=f(n+p-1)$, we obtain, on $|z|>\theta$,
	\begin{equation}\label{eq:res}
		F(z)=\frac{\sum_{i,j}R_{i,j}(z)}{z^{p-2}}+f(0)+\frac{f(1)}{z}+\dots+\frac{f(p-2)}{z^{p-2}},
	\end{equation} where $A_{i,j}^n$ and $R_{i,j}$ denote the $ij^{\text{th}}$ entries of the matrices $A^n$ and $R$, respectively. By the identity theorem,~\eqref{eq:res} holds true for all $z\in\mathbb{C}$. 
	Also, $R(z)=\frac{\text{adj}(zI-A)}{\det(zI-A)}$ (note that both $\text{adj}(zI-A)$ and $\det(zI-A)$ are polynomials). Since $\theta$ is the largest root of $\det(zI-A)$, $\sum_{i,j}R_{i,j}(z)$ is analytic outside the closed disk centered at the origin with radius $\theta$. Consequently, from~\eqref{eq:res}, $F(z)$ is analytic outside the closed disk centered at the origin with radius $\theta$. \\
	The arguments in Steps 1 and 2 can be extended to a reducible subshift of finite type provided at least one diagonal block (in the block upper triangular matrix via simultaneous row/column permutations associated to the adjacency matrix) is irreducible. In that case $\theta\ge 1$.\\
	\textbf{Step 3}: Since $\Sigma_\F$ is irreducible, $\theta$ is a simple root of $\det(zI-A)$. Hence $(z-\theta)\sum_{i,j}R_{i,j}(z)$ is analytic at $\theta$. Consequently, using~\eqref{eq:res}, $(z-\theta)F(z)$ is analytic at $\theta$. Therefore $\theta$ is a simple pole of $F$.     
\end{proof}

\begin{exam}
	In graph theory, the star graph $S_{1,n-1}$ ($n\ge 2$) is an undirected graph with one vertex in the center and $n-1$ vertices connected to it; there is no other edge. The Perron root associated to $S_{1,n-1}$ is $\sqrt{n-1}$. We verify this using Theorem~\ref{thm:same_roots}. For each $n$, let $\Sigma_{\F_n}$ be the subshift with adjacency matrix equal to that of $S_{1,n-1}$. Then $\F_n=\{00\}\cup\{ab\ :\ 1\le a,b\le n-1\}$ for which the correlation matrix polynomial function is
	\[
	\mathcal{M}_n(z)=\begin{pmatrix}
		z+1&0\\0&K_n(z)
	\end{pmatrix},	
	\]   with
	\[
	K_n(z)=D_n(z)+\begin{pmatrix}
		J_{n,1}&J_{n,1}&\dots&J_{n,1}\\
		J_{n,2}&J_{n,2}&\dots&J_{n,2}\\
		\vdots&\vdots&\ddots&\vdots\\
		J_{n,n-1}&J_{n,n-1}&\dots&J_{n,n-1}
	\end{pmatrix},
	\]
	where $D_n(z)$ is the diagonal matrix of order $(n-1)^2$ with all the diagonal entries as $z$, and $J_{n,i}$ is a matrix of order $n-1$ whose entries on the $i^{\text{th}}$ column are all 1 and the other entries are all zero. Hence the sum of entries of $\mathcal{M}_n^{-1}(z)$ is $r_n(z)=\frac{1}{z+1}+k_n(z)$, where $k_n(z)$ is the sum of entries of $K_n^{-1}(z)$. For each $i$, we have 
	\begin{equation}\label{eq:inverse}
		\sum_{k=1}^{(n-1)^2}(K_n)_{i,k}\sum_{j=1}^{(n-1)^2}(K_n^{-1})_{k,j}=\sum_{j=1}^{(n-1)^2}\delta_{i,j}=1.
	\end{equation}
	Let $C_{n,i}$ denote the $i^{\text{th}}$ column sum of $K_n$, and $R_{n,i}^{-1}$ denote the $i^{\text{th}}$ row sum of $K_n^{-1}$. Then by adding Equation~\eqref{eq:inverse} for each $i=1,\dots,(n-1)^2$, and using the fact that $C_{n,i}=z+n-1$, for each $i$, we have 
	\[
	(n-1)^2=\sum_{i=1}^{(n-1)^2}C_{n,i}R_{n,i}^{-1}= (z+n-1)\sum_{i=1}^{(n-1)^2}R_{n,i}^{-1}=(z+n-1)k_n(z).
	\]
	Hence $r_n(z)=\frac{1}{z+1}+\frac{(n-1)^2}{z+n-1}$. Solving $z-n+r_n(z)=0$, we obtain that the Perron root is given by the largest root of $z^2-(n-1)$ which is $\sqrt{n-1}$.
\end{exam}

\section{Intermediate results}
Let $\Sigma_\F$ be an irreducible subshift of finite type where $\Sigma=\{0,1,\dots,q-1\}$ and $\F=\{a_1,\dots,a_s\}$ is a minimal collection with the longest word having length $p\ge 2$. From~\eqref{eq:solF}, 
\[
F(z) = \dfrac{z}{z-q+r(z)}, 
\]
where $r(z)$ is the sum of the entries of $\mathcal{M}(z)^{-1}$. In this section, we will discuss some properties of the rational function $r(z)$ (consequently $F(z)$) which will be used in due course.

\begin{lemma}\label{lemma:r_sing}
	The rational function $r$ is either analytic or has a removable singularity at $\theta$ with $r(\theta)=q-\theta$.
\end{lemma}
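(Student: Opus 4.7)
The plan is to use the explicit rational form $F(z)=z/((z-q)+r(z))$ from Theorem~\ref{thm:formF}, together with the fact established in Theorem~\ref{thm:same_roots} that $\theta$ is a pole of $F$ in the irreducible setting. These two pieces of information essentially force the conclusion by a short complex-analytic argument, so no deep new ideas are needed; the lemma is really an immediate compatibility check between the two previous results.

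First I would observe that $r(z)$, being a sum of entries of $\mathcal{M}(z)^{-1}$, is a rational function, hence at the point $z=\theta$ it is either analytic (when $\det\mathcal{M}(\theta)\ne 0$, or when the apparent singularity coming from the vanishing of $\det\mathcal{M}$ is cancelled by a corresponding zero in the numerator) or it has an honest pole. The strategy is to rule out the pole case by contradicting the known pole structure of $F$ at $\theta$.

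Next I would argue as follows. Suppose toward contradiction that $r$ has a pole of some order $k\ge 1$ at $\theta$. Then $(z-q)+r(z)$ also has a pole of order $k$ at $\theta$, so $1/((z-q)+r(z))$ has a zero of order $k$ there. Since the numerator $z$ of $F(z)=z/((z-q)+r(z))$ does not vanish at $\theta$ (recall $\theta$ is a positive real number, indeed $\theta\ge 1$ in the irreducible case), it would follow that $F$ has a zero of order $k$ at $\theta$, directly contradicting Theorem~\ref{thm:same_roots}. Hence $r$ has no pole at $\theta$, i.e.\ either it is analytic there or the singularity is removable. In both cases $r(\theta)$ denotes a finite number obtained by analytic extension.

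Finally, knowing that $r$ extends analytically to a neighbourhood of $\theta$, I would use once more that $F$ has a pole at $\theta$: since the numerator $z$ is nonzero at $\theta$, the denominator $(z-q)+r(z)$ must vanish at $z=\theta$, which gives $\theta-q+r(\theta)=0$, i.e.\ $r(\theta)=q-\theta$. The only potential subtlety, and the one step I would double-check when writing out the details, is making sure the argument applies even when $\det\mathcal{M}(\theta)=0$ (so that $\theta$ is only a candidate pole of $r$ from its rational representation); but this is precisely the case the lemma is designed to cover, and the pole-versus-removable dichotomy above handles it cleanly.
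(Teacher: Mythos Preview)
Your proposal is correct and follows essentially the same approach as the paper's proof, which is simply the two-line observation that a pole of $F$ at $\theta$ prevents $r$ from having a pole there and forces the denominator $(z-q)+r(z)$ to vanish at $\theta$. You have merely spelled out in more detail the implication the paper leaves implicit (namely, that a pole of $r$ at $\theta$ would make $F$ vanish rather than blow up there).
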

\begin{proof}
	Since $\theta$ is a pole of $F$, $\theta$ is not a pole for the rational function $r$. Thus $r(\theta)$ is defined and equals $q-\theta$ which is positive.
\end{proof}

\noindent Consequently, $r'(z)$ exists in a neighborhood of $\theta$. 

\begin{lemma}\label{lemma:r'}
	$1+r'(\theta)>0$.
\end{lemma}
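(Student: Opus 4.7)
The plan is to introduce the auxiliary function $h(z) = z - q + r(z)$, so that $h'(\theta) = 1 + r'(\theta)$, and then prove $h'(\theta) > 0$ by combining a non-strict inequality coming from positivity of the generating function with the fact that $\theta$ is a \emph{simple} zero of $h$.

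First, I would record that $h$ is analytic in a neighborhood of $\theta$: Lemma~\ref{lemma:r_sing} tells us $r$ is analytic at $\theta$, so $h$ is as well, and $h(\theta) = 0$ by the very definition of $\theta$ together with Theorem~\ref{thm:same_roots} (recall $F(z) = z / h(z)$). Next, I would exploit the series representation $F(z) = \sum_{n=0}^\infty f(n) z^{-n}$. By Theorem~\ref{thm:same_roots} this series converges for every real $z > \theta$, and all its terms are non-negative with $f(0) = 1$, so
\[
F(z) \geq 1 > 0 \qquad \text{for every real } z > \theta.
\]
Since $F(z) = z / h(z)$ and $z > 0$, this forces $h(z) > 0$ on $(\theta, \infty)$. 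Dividing by $z - \theta > 0$ and passing to the one-sided limit,
\[
h'(\theta) = \lim_{z \to \theta^{+}} \frac{h(z) - h(\theta)}{z - \theta} \geq 0.
\]

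To upgrade this to a strict inequality, I would invoke the irreducibility clause of Theorem~\ref{thm:same_roots}: $\theta$ is a \emph{simple} pole of $F$. Since $F = z / h$ and $\theta > 0$, the numerator $z$ does not vanish at $\theta$, so $\theta$ must be a simple zero of $h$; in particular $h'(\theta) \neq 0$. Combined with the inequality above, this gives $h'(\theta) > 0$, i.e., $1 + r'(\theta) > 0$.

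I do not expect a serious obstacle here: the argument is essentially a positivity-plus-simplicity trick. The only potentially subtle point is verifying that the Laurent expansion $\sum f(n) z^{-n}$ actually converges on $(\theta, \infty)$ and represents $F$ there, which follows directly from Theorem~\ref{thm:same_roots} (analyticity of $F$ outside the closed disk of radius $\theta$), and that $h$ is analytic at $\theta$, which is Lemma~\ref{lemma:r_sing}.
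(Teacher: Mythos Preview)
Your argument is correct and follows essentially the same route as the paper. The paper computes the residue $\lim_{z\to\theta}(z-\theta)F(z)$ via L'H\^opital as $\theta/(1+r'(\theta))$ and observes it is positive from the right along the reals, whereas you phrase the same idea in terms of the difference quotient of $h(z)=z-q+r(z)$; in both cases the two ingredients are positivity of $F$ on $(\theta,\infty)$ and simplicity of the pole at $\theta$.
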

\begin{proof}
	Since $\theta$ is a simple pole for $F$, $(z-\theta)F(z)$ is analytic at $z=\theta$ and $\lim_{z\rightarrow\theta}(z-\theta)F(z)=\lim_{z\rightarrow\theta^+,   z\in\mathbb{R}} (z-\theta)F(z)>0$. Moreover, 
	\[
	\lim_{z\rightarrow\theta} (z-\theta)F(z)=\lim_{z\rightarrow\theta} \dfrac{(z-\theta)z}{z-q+r(z)}= \lim_{z\rightarrow\theta} \dfrac{2z-\theta}{1+r'(z)}.
	\]
	Hence $1+r'(\theta)>0$.
\end{proof}
Using Theorem~\ref{thm:formF}, $\sum_i F_i(z)=1-\dfrac{(z-q)}{z}F(z)$. Since by Theorem~\ref{thm:same_roots}, $\theta$ is a simple pole for $F$, and $F_i(z)>0$ for all $z>0$, each $(z-\theta)F_i(z)$ is analytic at $\theta$. Therefore, 
\[
\lim_{z\rightarrow \theta} (z-\theta)F_i(z)= \lim_{z\rightarrow \theta} \dfrac{(z-\theta)F(z)}{z}\mathcal{R}_i(z) 
\]
exists, where $\mathcal{R}_i(z)$ is the sum of the entries of the $i^{th}$ row of $\mathcal{M}(z)^{-1}$. Since $\lim_{z\rightarrow \theta} \mathcal{R}_i(z)= \lim_{z\rightarrow \theta}\dfrac{z(z-\theta)F_i(z)}{(z-\theta)F(z)}$ where the limits of both numerator and denominator exist and the limit of the denominator is positive, $\lim_{z\rightarrow \theta}\mathcal{R}_i(z)$ exists.

Similarly using Theorems~\ref{thm:formG} and~\ref{thm:same_roots}, each $(z-\theta)G_j(z)$ is analytic at $\theta$, hence, 
\[
\lim_{z\rightarrow \theta}(z-\theta)G_j(z)  = \lim_{z\rightarrow \theta} \dfrac{(z-\theta)F(z)}{z}\mathcal{C}_j(z),
\] 
exists, where $\mathcal{C}_j(z)$ is the sum of the entries of the $j^{th}$ column of $\mathcal{M}(z)^{-1}$. Also $\lim_{z\rightarrow \theta}\mathcal{C}_j(z)$ exists.

\begin{lemma}\label{lemma:RC}
	The limits $\lim_{z\rightarrow \theta}\mathcal{R}_i(z)$ and $\lim_{z\rightarrow \theta}\mathcal{C}_j(z)$ exist, for all $i,j=1,\dots,s$.
\end{lemma}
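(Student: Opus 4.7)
The plan is to leverage the identities already established and argue that both $\mathcal{R}_i$ and $\mathcal{C}_j$ can be written as ratios of two functions, each of which is analytic at $\theta$, with the denominator having a non-zero limit there. Specifically, from Theorem~\ref{thm:formF} we have $F_i(z) = \mathcal{R}_i(z)F(z)/z$, so
\[
\mathcal{R}_i(z) \;=\; \frac{z\,(z-\theta)F_i(z)}{(z-\theta)F(z)}.
\]
By Lemma~\ref{lemma:r'} (its proof), $(z-\theta)F(z)$ is analytic at $\theta$ with limit $\theta/(1+r'(\theta))>0$. So if I can show that $(z-\theta)F_i(z)$ is analytic at $\theta$, then $\mathcal{R}_i(z)$ is a ratio of two analytic functions with positive denominator-limit, hence the limit at $\theta$ exists.

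The first key step is therefore to show that each $F_i$ has at most a simple pole at $\theta$. My plan here is to start from the identity (a consequence of Theorem~\ref{thm:formF}):
\[
\sum_{i=1}^{s} F_i(z) \;=\; 1 - \frac{(z-q)}{z} F(z).
\]
By Theorem~\ref{thm:same_roots}, $F$ has a simple pole at $\theta$, so the right-hand side has at most a simple pole at $\theta$; equivalently, $(z-\theta)^2 \sum_i F_i(z) \to 0$ as $z\to\theta$. Now $F_i(z)=\sum_{n\ge 0} f_i(n) z^{-n}$ has non-negative coefficients, so $F_i(z)\ge 0$ for all real $z>\theta$, giving $(z-\theta)^2 F_i(z)\ge 0$ on $(\theta,\infty)$. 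Each limit $\lim_{z\to\theta^+}(z-\theta)^2 F_i(z)$ therefore exists and is non-negative, and these non-negative numbers sum to zero. Hence each is zero, which forces each pole of $F_i$ at $\theta$ to be of order at most one, so $(z-\theta)F_i(z)$ is analytic at $\theta$ (any residual pole is removable by the principle of subtracting a simple pole).

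Once the above is in place, substituting back shows $\lim_{z\to\theta}\mathcal{R}_i(z)$ exists. The argument for $\mathcal{C}_j$ is word-for-word the same, replacing Theorem~\ref{thm:formF} by Corollary~\ref{thm:formG} and using $G_j(z)=\mathcal{C}_j(z)F(z)/z$ together with $\sum_j G_j(z) = 1 - (z-q)F(z)/z$.

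I do not expect any real obstacle: all the substantive analytic input (that $\theta$ is a simple pole of $F$, that $(z-\theta)F(z)$ has a positive limit at $\theta$) has already been supplied by Theorem~\ref{thm:same_roots} and the proof of Lemma~\ref{lemma:r'}. The only slightly delicate bookkeeping is justifying that each individual $F_i$ (and each $G_j$) inherits the simple-pole property of the sum, and for this the sign information coming from non-negativity of the series coefficients is the crucial ingredient — without it one could not rule out higher-order poles in the individual $F_i$'s that cancel in the sum.
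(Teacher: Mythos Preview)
Your proposal is correct and follows essentially the same route as the paper: the paper also writes $\mathcal{R}_i(z)=\dfrac{z(z-\theta)F_i(z)}{(z-\theta)F(z)}$, notes that the denominator has a positive limit at $\theta$, and uses the identity $\sum_i F_i(z)=1-\dfrac{(z-q)}{z}F(z)$ together with positivity of the $F_i$ on the real axis to conclude that each $(z-\theta)F_i(z)$ is analytic at $\theta$. Your write-up simply spells out the non-negativity/squeeze step (that $(z-\theta)^2\sum_i F_i(z)\to 0$ forces each non-negative summand to vanish) more explicitly than the paper does.
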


\noindent Since the limits exist, denote
\[
\mathcal{R}_i(\theta):=\lim_{z\rightarrow \theta}\mathcal{R}_i(z),\ \ \mathcal{C}_j(\theta):=\lim_{z\rightarrow \theta}\mathcal{C}_j(z).
\]

Let $w$ be an allowed word of length $n\ge p$ in $\Sigma_\F$. Consider the correlation matrix functions $\mathcal{M}(z)$ for the collection $\F$, and $\mathcal{M}_w(z)$ for the collection $\F\cup\{w\}$ given by
\[
\mathcal{M}_w(z)=\begin{pmatrix}
	\mathcal{M}(z)&X(z)\\ Y(z)&Z(z)
\end{pmatrix},
\]
where $X(z)=((w,a_1)_z,\dots,(w,a_s)_z)^T$, $Y(z)=((a_1,w)_z,\dots,(a_s,w)_z)$, and $Z(z)=(w,w)_z$. Also,
\begin{itemize}
	\item as earlier, $r(z)=\mathcal{S}(z)/\mathcal{D}(z)$, where $\mathcal{S}(z)$ denotes the sum of the entries of the adjoint matrix of $\mathcal{M}(z)$ and $\mathcal{D}(z)$ denotes the determinant of $\mathcal{M}(z)$. Recall that $\theta$ is the largest positive real zero in modulus of $(z-q)+r(z)$, which coincides with the Perron root of the adjacency matrix for $\Sigma_\F$.
	\item let $r_w(z)=\mathcal{S}_w(z)/\mathcal{D}_w(z)$, where $\mathcal{S}_w(z)$ denotes the sum of the entries of the adjoint matrix of $\mathcal{M}_w(z)$ and $\mathcal{D}_w(z)$ denotes the determinant of $\mathcal{M}_w(z)$. Note that $\Sigma_{\F\cup\{w\}}$ need not be irreducible. However, if $\theta_w$ denotes the Perron root of the adjacency matrix for $\Sigma_{\F\cup\{w\}}$, then $\theta_w$ is the largest positive real zero in modulus of $(z-q)+r_w(z)$, by Theorem~\ref{thm:same_roots}.
\end{itemize}

The following lemma is an easy consequence of the determinant and inverse formulae for block matrices, see~\cite{HJ_book}.

\begin{lemma}\label{lemma:mu_alt}
	The following holds true:
	\begin{eqnarray*}
		\lim_{z\rightarrow \theta} \dfrac{\mathcal{D}(z)\mathcal{S}_w(z)-\mathcal{S}(z)\mathcal{D}_w(z)}{\mathcal{D}(z)^2} &=& \left(1-\sum_{i=1}^s \mathcal{R}_i(\theta) (a_i,w)_\theta\right)\left(1-\sum_{j=1}^s \mathcal{C}_j(\theta) (w,a_j)_\theta\right).
	\end{eqnarray*}
\end{lemma}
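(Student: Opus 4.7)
The plan is to reduce the identity to an elementary consequence of the Schur complement formulas for the determinant and inverse of a $2\times 2$ block matrix, applied to $\mathcal{M}_w(z)$ in the partition written in the statement. Throughout, I will treat everything as an identity of rational functions in $z$ (valid wherever $\mathcal{D}(z)\ne 0$, a non-empty open set by non-degeneracy of $\mathcal{M}$), and only at the very end pass to the limit $z\to\theta$ using Lemma~\ref{lemma:RC}.

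First I would write $S(z)=Z(z)-Y(z)\mathcal{M}(z)^{-1}X(z)$, the scalar Schur complement, so that the block-determinant identity gives $\mathcal{D}_w(z)=\mathcal{D}(z)\cdot S(z)$. Then, applying the standard block-inverse formula for $\mathcal{M}_w^{-1}$ and summing all entries, I would expand
\[
r_w(z)=r(z)+S(z)^{-1}\bigl(\mathbbm{1}^T\mathcal{M}(z)^{-1}X(z)-1\bigr)\bigl(Y(z)\mathcal{M}(z)^{-1}\mathbbm{1}-1\bigr).
\]
(The algebra here is just the identity $\alpha\beta-\alpha-\beta+1=(1-\alpha)(1-\beta)$ applied to the four contributions coming from the four blocks of $\mathcal{M}_w^{-1}$.) Next I would observe that the row vector $\mathbbm{1}^T\mathcal{M}(z)^{-1}$ has $j$-th entry $\mathcal{C}_j(z)$, while the column vector $\mathcal{M}(z)^{-1}\mathbbm{1}$ has $i$-th entry $\mathcal{R}_i(z)$. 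Since $X(z)_j=(w,a_j)_z$ and $Y(z)_i=(a_i,w)_z$, this identifies
\[
1-\mathbbm{1}^T\mathcal{M}(z)^{-1}X(z)=1-\sum_{j=1}^s \mathcal{C}_j(z)(w,a_j)_z,\quad 1-Y(z)\mathcal{M}(z)^{-1}\mathbbm{1}=1-\sum_{i=1}^s \mathcal{R}_i(z)(a_i,w)_z.
\]

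Combining these two steps and using $\mathcal{D}_w=\mathcal{D}\cdot S$, a direct manipulation gives
\[
\frac{\mathcal{D}(z)\mathcal{S}_w(z)-\mathcal{S}(z)\mathcal{D}_w(z)}{\mathcal{D}(z)^2}=S(z)\bigl(r_w(z)-r(z)\bigr)=\Bigl(1-\sum_{i}\mathcal{R}_i(z)(a_i,w)_z\Bigr)\Bigl(1-\sum_{j}\mathcal{C}_j(z)(w,a_j)_z\Bigr),
\]
as rational functions in $z$. Finally, taking $z\to\theta$ yields the desired formula: the polynomials $(a_i,w)_z$ and $(w,a_j)_z$ are continuous, and the limits $\mathcal{R}_i(\theta),\mathcal{C}_j(\theta)$ exist by Lemma~\ref{lemma:RC}, so the right-hand side converges to the claimed product; consequently the left-hand side, which is a priori a quotient of polynomials, also admits a finite limit at $\theta$ equal to that product.

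The main potential obstacle is purely bookkeeping: making sure the block-inverse expansion is summed correctly so that the four cross-terms collapse into the symmetric product $(1-\alpha)(1-\beta)$, and ensuring that the apparent singularity of $1/\mathcal{D}(z)^2$ at $z=\theta$ is harmless because both sides are equal as rational functions wherever $\mathcal{D}(z)\ne 0$, so the limit at $\theta$ can be read off from the (manifestly finite) right-hand side. No deeper ingredient beyond the Schur complement and Lemma~\ref{lemma:RC} is needed.
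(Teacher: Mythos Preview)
Your proposal is correct and is exactly the argument the paper has in mind: the paper states only that the lemma ``is an easy consequence of the determinant and inverse formulae for block matrices'' and cites Horn--Johnson, and your Schur-complement computation (with $\mathcal{D}_w=\mathcal{D}\cdot S$ and the block-inverse expansion yielding $r_w-r=S^{-1}(1-\alpha)(1-\beta)$, then passage to the limit via Lemma~\ref{lemma:RC}) fills in precisely those details.
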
 

\begin{rem}\label{rem:choice_w}
	Let $w=w_1\dots w_n$ be an allowed word, $x=w_1\dots w_{p-1}$, and $y=w_{n-p+2}\dots w_n$. Since $(a_i,w)_z=(\widetilde{a_i},x)_z$, and $(w,a_j)_z=(y,a_j)_z$, for all $z$, and $i,j$, where $\widetilde{a_i}$ is the subword of $a_i$ obtained by removing the first symbol of $a_i$, Lemma~\ref{lemma:mu_alt} reduces to 
	\begin{eqnarray*}
		\lim_{z\rightarrow \theta} \dfrac{\mathcal{D}(z)\mathcal{S}_w(z)-\mathcal{S}(z)\mathcal{D}_w(z)}{\mathcal{D}(z)^2} &=& \left(1-\sum_{i=1}^s \mathcal{R}_i(\theta) (\widetilde{a_i},x)_\theta\right)\left(1-\sum_{j=1}^s \mathcal{C}_j(\theta) (y,a_j)_\theta\right).
	\end{eqnarray*}
	Hence the limit is independent of the word $w$ which starts with $x$ and ends with $y$.
\end{rem}

\begin{lemma}\label{lemma:mu_alt_nonzero}
	The limit obtained in Lemma~\ref{lemma:mu_alt} is positive. 
\end{lemma}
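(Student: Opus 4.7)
The plan is to factor the limit as $L = (\mathcal{D}_w(\theta)/\mathcal{D}(\theta))(r_w(\theta) - r(\theta))$, where $r_w$ and $\mathcal{D}_w$ are the analogues of $r$ and $\mathcal{D}$ for the enlarged forbidden set $\F \cup \{w\}$ (which is still reduced, since $|w|\ge p$ and $w$ is $\F$-allowed, so no $a_i$ is a subword of $w$ and $w$ is not a subword of any $a_i$), and then to show that both factors are strictly positive. The comparison tool is the generating function $F_w$ of $\Sigma_{\F \cup \{w\}}$; by Theorem~\ref{thm:formF}, $F_w(z) = z/((z-q) + r_w(z))$.

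For $r_w(\theta) - r(\theta) > 0$: every word avoiding $\F \cup \{w\}$ also avoids $\F$, so the Laurent coefficients of $F - F_w$ are non-negative. Writing $F$ and $F_w$ in the form above, $F(z) \ge F_w(z) > 0$ on the real axis beyond both Perron roots translates to $r_w(z) \ge r(z)$ there, and hence $r_w(\theta) \ge r(\theta)$ by continuity at $\theta$ (using Lemma~\ref{lemma:r_sing} applied to both collections). To upgrade to strict inequality, invoke the standard fact that removing an allowed word from an irreducible SFT strictly decreases its topological entropy, so $\theta_w < \theta$. Thus $F_w$ is analytic at $\theta$, and $\mathrm{Res}(F - F_w, \theta) = \mathrm{Res}(F, \theta) = \theta/(1 + r'(\theta)) > 0$ by Lemma~\ref{lemma:r'}. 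A direct residue computation using the identity $F - F_w = z(r_w - r)/[((z-q)+r)((z-q)+r_w)]$ together with the simple zero of $(z-q)+r$ at $\theta$ (Theorem~\ref{thm:same_roots}) then forces $r_w(\theta) - r(\theta)$ to be a nonzero finite real number, hence strictly positive.

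For $\mathcal{D}_w(\theta)/\mathcal{D}(\theta) > 0$: write $F(z) = z\mathcal{D}(z)/P(z)$ with $P(z) = (z-q)\mathcal{D}(z) + \mathcal{S}(z)$. Since $F$ has no real poles on $(\theta, \infty)$, $P$ is non-vanishing there, and its positive leading coefficient forces $P(z) > 0$ on $(\theta,\infty)$. Combined with $F(z) = \sum_n f(n) z^{-n} > 0$ for $z > \theta$ (as $f(0) = 1$ and $f(n) \ge 0$), this yields $\mathcal{D}(z) > 0$ on $(\theta, \infty)$, and continuity together with Lemma~\ref{lemma:r_sing} (which gives $r(\theta) = q - \theta > 0$ when $\F \neq \emptyset$) upgrades to $\mathcal{D}(\theta) > 0$. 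Applying the same reasoning to $F_w$ on $(\theta_w, \infty) \supset [\theta, \infty)$ gives $\mathcal{D}_w(\theta) > 0$. Combining the two factors, $L > 0$.

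The main obstacle is the entropy-monotonicity inequality $\theta_w < \theta$, which is standard for irreducible SFTs but external to the development in this paper; a self-contained alternative is to show directly from the irreducibility of $\Sigma_\F$ that $f(n) - f_w(n) = \Omega(\theta^n)$ (for any allowed $w$, one can prepend and append any allowed extensions to an occurrence of $w$, producing a positive proportion of length-$n$ $\F$-allowed words that contain $w$), which forces $\mathrm{Res}(F - F_w, \theta) > 0$ and avoids any explicit reference to $\theta_w$.
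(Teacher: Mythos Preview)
Your overall factorization $L = (r_w - r)\cdot(\mathcal{D}_w/\mathcal{D})$ and your treatment of the first factor match the paper's approach: the paper also rewrites $r_w(z)-r(z)$ as the target quantity times $\mathcal{D}(z)/\mathcal{D}_w(z)$, and obtains $\lim_{z\to\theta}(r_w(z)-r(z))=\theta/F_w(\theta)>0$ via $\theta_w<\theta$. (A minor slip: ``Lemma~\ref{lemma:r_sing} applied to both collections'' gives analyticity of $r_w$ at $\theta_w$, not at $\theta$; what you actually need---and what is true---is $F_w(\theta)>0$, which forces $r_w(\theta)$ finite.)

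The genuine gap is in the second factor. Your argument that $\mathcal{D}(\theta)>0$ does not go through. From $F(z)=z\mathcal{D}(z)/P(z)$ and $F$ having no poles on $(\theta,\infty)$ you cannot conclude that $P$ is non-vanishing there: $P$ may well vanish at a point where $\mathcal{D}$ also vanishes, producing a removable singularity of $F$ rather than a pole. And ``continuity together with Lemma~\ref{lemma:r_sing}'' only yields $\mathcal{D}(\theta)\ge 0$; if $\mathcal{D}(\theta)=0$ then $\mathcal{S}(\theta)=0$ as well, and $r(\theta)=q-\theta>0$ is perfectly compatible with a removable singularity of $r=\mathcal{S}/\mathcal{D}$ at $\theta$. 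Indeed Lemma~\ref{lemma:r_sing} is phrased precisely to allow this (``analytic \emph{or has a removable singularity}''), and the paper nowhere claims $\mathcal{D}(\theta)\neq 0$. The same objection applies verbatim to your claim $\mathcal{D}_w(\theta)>0$.

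The paper sidesteps evaluating $\mathcal{D}$ and $\mathcal{D}_w$ separately. It writes the ratio as
\[
\frac{\mathcal{D}(z)}{\mathcal{D}_w(z)}=\frac{1}{(w,w)_z - Y(z)\,\mathrm{Adjoint}(\mathcal{M}(z))\,X(z)/\mathcal{D}(z)},
\]
shows that $\lim_{z\to\theta} Y\,\mathrm{Adjoint}(\mathcal{M})\,X/\mathcal{D}$ exists (otherwise the ratio would tend to $0$, contradicting the positivity of $r_w(\theta)-r(\theta)$ already established), and then---this is the key move you are missing---invokes Remark~\ref{rem:choice_w} to replace $w$ by an arbitrarily long allowed word with the same first and last $(p-1)$-blocks, so that $(w,w)_\theta$ dominates and the ratio becomes strictly positive. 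Your proposal fixes a single $w$ and attempts to control $\mathcal{D}$ and $\mathcal{D}_w$ individually; without the freedom to lengthen $w$, that route cannot be completed as written.
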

\begin{proof}
	For any allowed word $w$ which starts with $x$ and ends with $y$, 
	\begin{eqnarray}\label{eq:form}
		r_w(z)-r(z)&=& \dfrac{\mathcal{D}(z)\mathcal{S}_w(z)-\mathcal{S}(z)\mathcal{D}_w(z)}{\mathcal{D}(z)\mathcal{D}_w(z)}=\dfrac{\mathcal{D}(z)\mathcal{S}_w(z)-\mathcal{S}(z)\mathcal{D}_w(z)}{\mathcal{D}(z)^2}\dfrac{\mathcal{D}(z)}{\mathcal{D}_w(z)}.
	\end{eqnarray}
	Since $\theta_w$ is the Perron root corresponding to the adjacency matrix of $\Sigma_{\F\cup\{w\}}$, $\theta_w<\theta$. By the form of rational functions $r$ and $r_w$ in terms of $F$ and $F_w$, we get
	\begin{eqnarray}\label{eq:form1}
		\lim_{z\rightarrow\theta}\left(r_w(z)-r(z)\right)&=&\dfrac{\theta}{F_w(\theta)}>0.
	\end{eqnarray} 
	Further note that
	\[
	\dfrac{\mathcal{D}(z)}{\mathcal{D}_w(z)} = \dfrac{1}{(w,w)_z - \dfrac{Y(z) \text{Adjoint}(\mathcal{M}(z)) X(z)}{\mathcal{D}(z)}}.
	\]
	The limit (of the rational function) $\lim_{z\rightarrow\theta}\dfrac{Y(z) \text{Adjoint}(\mathcal{M}(z)) X(z)}{\mathcal{D}(z)}$ exists since otherwise $\lim_{z\rightarrow\theta}\dfrac{\mathcal{D}(z)}{\mathcal{D}_w(z)}=0$, which using~\eqref{eq:form} contradicts~\eqref{eq:form1}. \\
	Now since $\Sigma_\F$ is irreducible, choose a word $w$ sufficiently long such that 
	\[
	(w,w)_\theta>\lim_{z\rightarrow\theta} \dfrac{Y(z) \text{Adjoint}(\mathcal{M}(z)) X(z)}{\mathcal{D}(z)}.
	\]
	Thus $\lim_{z\rightarrow\theta}\dfrac{\mathcal{D}(z)}{\mathcal{D}_w(z)}$ exists and is positive. \\
	Taking limits on both sides of~\eqref{eq:form}, we obtain the desired result.
\end{proof}

\begin{rem}\label{rem:same_sign}
	Note that in the preceding proof, we chose a word $w$ satisfying a certain condition. However, the result is independent of this choice of $w$ due to Remark~\ref{rem:choice_w}. 
	Moreover, by Lemma~\ref{lemma:mu_alt_nonzero}, the terms in both brackets on the right of the expression in the statement of Lemma~\ref{lemma:mu_alt} are non-zero and have the same sign.
\end{rem}

\section{Left and right eigenvectors corresponding to the Perron root $\theta$}
In this section, we give an expression for eigenvectors of the adjacency matrix $A$ of an irreducible subshift $\Sigma_\F$ corresponding to the Perron root $\theta$ using correlation between the forbidden words. In what follows, the forbidden collection $\F=\{a_1,\dots,a_s\}$ is minimal with the longest word having length $p\ge 2$. The correlation matrix function of $\F$ is $\mathcal{M}(z)$, $\mathcal{D}(z)$ is the determinant of $\mathcal{M}(z)$, and $\mathcal{S}(z)$ is the sum of the entries of the adjoint matrix of $\mathcal{M}(z)$. For each allowed word $x$ of length $p-1$, let
\begin{eqnarray}\label{eq:ux_vy}
	u_x=1-\sum_{i=1}^s \mathcal{R}_i(\theta) (\widetilde{a_i},x)_\theta, \ \ \ v_x=1-\sum_{j=1}^s \mathcal{C}_j(\theta)(x,a_j)_\theta.
\end{eqnarray}

In Theorem~\ref{thm:eig_vec}, we will prove that the vectors $u=(u_x)_x$ and $v=(v_x)_x$ are left and right eigenvectors (not necessarily normalized) of the adjacency matrix $A$, corresponding to the Perron root $\theta$. 

\begin{rem}
	If $\Sigma_\F$ is irreducible, then for each pair of allowed words $x,y$, $u_xv_y>0$ by Lemma~\ref{lemma:mu_alt_nonzero}. Hence, all of the quantities in the set 
	\[
	\{u_x,v_y\ \vert\ x,y \text{ allowed words of length $p-1$ in $\Sigma_\F$}\}
	\]
	obtained in Theorem~\ref{thm:eig_vec} will have the same sign, which is also true by the Perron-Frobenius theorem. 
\end{rem}

In the following result, we consider the simplest case when $\F$ has a single forbidden word $a_1$ of length $p$. In this case, all the words of length $p-1$ are allowed in $\Sigma_F$. For each $x\in \Sigma^{p-1}$, 
\[
u_x=1-\dfrac{(\widetilde{a_1},x)_\theta}{(a_1,a_1)_\theta}, \ v_x=1-\dfrac{(x,a_1)_\theta}{(a_1,a_1)_\theta}. 
\]   

\begin{lemma}\label{lemma:one_word}
	Consider an irreducible subshift $\Sigma_\F$ where $\F=\{a_1\}$. Then the vectors $v=(v_x)_x$ and $u=(u_x)_x$ are respectively right and left eigenvectors of the adjacency matrix, corresponding to the Perron root $\theta$.
\end{lemma}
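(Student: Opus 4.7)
The strategy is to verify $Av=\theta v$ and $u^TA=\theta u^T$ by direct substitution, concentrating all the combinatorics in one clean polynomial identity for correlations. Since $\F=\{a_1\}$ is a single word, the correlation matrix collapses to the scalar $\mathcal{M}(z)=(a_1,a_1)_z$, so $r(z)=1/(a_1,a_1)_z$ and $\mathcal{R}_1(\theta)=\mathcal{C}_1(\theta)=1/(a_1,a_1)_\theta$. Theorem~\ref{thm:same_roots} then gives $q-\theta=1/(a_1,a_1)_\theta$, while the formulas in~\eqref{eq:ux_vy} reduce to $u_x=1-(\widetilde{a_1},x)_\theta/(a_1,a_1)_\theta$ and $v_x=1-(x,a_1)_\theta/(a_1,a_1)_\theta$. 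Because $\F$ has no word of length less than $p$, every word of length $p-1$ is allowed, and $A$ is a $q^{p-1}\times q^{p-1}$ matrix with $A_{xy}=1$ iff $x_2\cdots x_{p-1}=y_1\cdots y_{p-2}$ and $x_1\cdots x_{p-1}y_{p-1}\neq a_1$.

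To handle the right eigenvector, fix $x\in\Sigma^{p-1}$, write $\widetilde{x}=x_2\cdots x_{p-1}$, and let $\overline{a_1}$ denote the length-$(p-1)$ prefix of $a_1$. The adjacency structure gives $(Av)_x = \sum_{\alpha\in\Sigma}v_{\widetilde{x}\alpha} - \delta_{x=\overline{a_1}}\,v_{\widetilde{a_1}}$. Substituting the formula for $v_y$ and applying $q-\theta=1/(a_1,a_1)_\theta$, a short computation yields
$$
(Av)_x - \theta v_x \,=\, \frac{1+\theta(x,a_1)_\theta - \sum_{\alpha\in\Sigma}(\widetilde{x}\alpha,a_1)_\theta}{(a_1,a_1)_\theta} \,-\, \delta_{x=\overline{a_1}}\,v_{\widetilde{a_1}}.
$$
Hence the proof reduces to the two polynomial identities
\begin{align*}
\sum_{\alpha\in\Sigma}(\widetilde{x}\alpha,a_1)_z &\,=\, z\,(x,a_1)_z \,-\, \delta_{x=\overline{a_1}}\,z^{p-1} \,+\, 1,\\
(a_1,a_1)_z &\,=\, z^{p-1} \,+\, (\widetilde{a_1},a_1)_z.
\end{align*}
The first makes the bracketed numerator collapse to $\delta_{x=\overline{a_1}}\,\theta^{p-1}$; the second forces $v_{\widetilde{a_1}}=\theta^{p-1}/(a_1,a_1)_\theta$; and the two contributions cancel exactly.

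Both identities are proved by matching coefficients in Definition~\ref{def:corr}. The second is immediate: the leading coefficient of $(a_1,a_1)_z$ is the trivial self-overlap at $\ell=0$ (always equal to $1$), while the remaining coefficients correspond, under the index shift $\ell\mapsto\ell-1$, to the coefficients of $(\widetilde{a_1},a_1)_z$. For the first, I would compare coefficients of $z^{p-2-\ell}$ on both sides. The left counts $\alpha\in\Sigma$ for which $a_1$ overlaps $\widetilde{x}\alpha$ starting at position $\ell+1$. For $0\le\ell\le p-3$, this overlap condition fixes $\alpha$ to be the $(p-1-\ell)$-th symbol of $a_1$ and leaves precisely the condition that $a_1$ overlaps $x$ starting at position $\ell+2$, which matches the coefficient of $z^{p-2-\ell}$ in $z\,(x,a_1)_z$. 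For $\ell=p-2$ the overlap has length one and any $\alpha$ equal to the first symbol of $a_1$ works, producing the constant $+1$. Finally, the coefficient of $z^{p-1}$ in $z\,(x,a_1)_z$ (the $\ell=0$ shift, equal to $\delta_{x=\overline{a_1}}$) has no counterpart on the left, producing the correction term $-\delta_{x=\overline{a_1}}\,z^{p-1}$.

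The left eigenvector equation is handled by the mirror argument. For fixed $y\in\Sigma^{p-1}$, writing $\overline{y}=y_1\cdots y_{p-2}$, one has $(u^TA)_y = \sum_{\beta\in\Sigma} u_{\beta\overline{y}} - \delta_{y=\widetilde{a_1}}\,u_{\overline{a_1}}$, and the same substitution-and-reduction procedure collapses the check to the dual identity $\sum_\beta(\widetilde{a_1},\beta\overline{y})_z = z\,(\widetilde{a_1},y)_z - \delta_{y=\widetilde{a_1}}\,z^{p-1} + 1$ together with $u_{\overline{a_1}} = \theta^{p-1}/(a_1,a_1)_\theta$ (again via the auto-correlation factorization). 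The only genuinely delicate step in the whole argument is the first correlation identity: getting its two boundary contributions at $\ell=0$ and $\ell=p-2$ right requires careful case analysis, but once that identity is in place the rest of the proof is purely mechanical algebra.
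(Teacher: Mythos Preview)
Your proof is correct and follows essentially the same approach as the paper: reduce $(Av)_x=\theta v_x$ via the relation $(q-\theta)(a_1,a_1)_\theta=1$ to a correlation identity of the form $\sum_\alpha(\widetilde{x}\alpha,a_1)_z=z(x,a_1)_z+1$ (up to the boundary correction at $x=\overline{a_1}$), and verify that identity by matching coefficients. The only difference is cosmetic: the paper splits into the two cases $x\neq\overline{a_1}$ and $x=\overline{a_1}$, whereas you unify them with the $\delta_{x=\overline{a_1}}$ term and the auxiliary factorization $(a_1,a_1)_z=z^{p-1}+(\widetilde{a_1},a_1)_z$ to extract $v_{\widetilde{a_1}}$ explicitly.
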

\begin{proof}
	Let $\F=\{a_1=a_{11}a_{12}\dots a_{1p}\}$ and $A$ be the corresponding adjacency matrix.
	We prove that $v$ is a right Perron eigenvector. Using similar arguments it can be shown that $u$ is a left Perron eigenvector.
	
	Note that any word of length $p-1$ is allowed in $\Sigma_\F$. Let $x=x_1\dots x_{p-1}$ and $y=y_1\dots y_{p-1}$ be two words of length $p-1$. 
	Recall that $A_{xy}=1$ if and only if $x_2=y_1,\dots, x_{p-1}=y_{p-2}$ and $x_1\dots x_{p-2}y_{p-1}\ne a_1$. Hence, in each row, except the row indexed by the word $a_{11}\dots a_{1(p-1)}$, there are exactly $q$ 1's. \\~\\
	\textbf{Case 1}: First consider a word, say $x=x_1x_2\dots x_{p-1}$, different from $a_{11}\dots a_{1(p-1)}$. As discussed, there are $q$ 1's in the row indexed by the word $x$, and they are of the form $A_{xx_\beta}$, where $x_\beta=x_2\dots x_{p-1}\beta$, for $\beta=0,1,\dots,q-1$. We need to show that $\sum_{\beta=0}^{q-1}v_{x_\beta}=\theta v_{x}$. That is, 
	\[
	q(a_1,a_1)_\theta-\sum_{\beta=0}^{q-1}(x_\beta, a_1)_\theta=\theta((a_1,a_1)_\theta-(x,a_1)_\theta),
	\] 
	since $(a_1,a_1)_\theta\ne 0$. This is true if and only if 
	\[
	(a_1,a_1)_\theta(q-\theta)=\sum_{\beta=0}^{q-1}(x_\beta,a_1)_\theta-\theta (x,a_1)_\theta.
	\] 
	By Lemma~\ref{lemma:r_sing} (notice that it does not require irreducibility of $\Sigma_\F$),  $(q-\theta)(a_1,a_1)_\theta=1$. Hence it is enough to show that 
	\[
	\sum_{\beta=0}^{q-1}(x_\beta,a_1)_\theta=\theta (x,a_1)_\theta+1.
	\] 
	We will, in fact, show that for all $z$, 
	\begin{eqnarray}\label{eq:corre}
		\sum_{\beta=0}^{q-1}(x_\beta,a_1)_z=z(x,a_1)_z+1.
	\end{eqnarray}
	Let $(x,a_1)_z=\sum_{j=1}^{p-2}b_{j}z^{p-2-j}$, and $(x_\beta,a_1)_z=\sum_{j=0}^{p-2}b_{\beta,j}z^{p-2-j}$, where each $b_{j},b_{\beta,j}$ is either 0 or 1. First observe that $b_{\beta,p-2}=1$ if and only if $\beta=a_{11}$. Hence, ~\eqref{eq:corre} is equivalent to proving that for all $j=1,\dots, p-2$,
	\[
	\left(\sum_{\beta=0}^{q-1}b_{\beta,j-1}\right) - b_{j}=0.
	\] 
	This is immediate since $b_{j}=1$ if and only if $x_{j+1}=a_{11},\dots,x_{p-1}=a_{1(p-j-1)}$. Moreover, $b_{\beta,j-1}=1$ if and only if $x_{j+1}=a_{11},\ \dots,\ x_{p-1}=a_{1(p-j-1)}$, and $\beta=a_{1(p-j)}$.\\~\\
	\textbf{Case 2}: Now consider $x=a_{11}\dots a_{1(p-1)}$. The row indexed by $x$ has exactly $q-1$ 1's. We need to show that 
	\[	(q-1)(a_1,a_1)_\theta-\sum_{\beta=0,\beta\ne a_{1p}}^{q-1}(x_\beta,a_1)_\theta=\theta(a_1,a_1)_\theta-\theta(x,a_1)_\theta.
	\] Using $(q-\theta)(a_1,a_1)_\theta=1$, this is equivalent to 
	\begin{equation*}
		\sum_{\beta=0,\beta\ne a_{1p}}^{q-1}(x_\beta,a_1)_\theta+(a_1,a_1)_\theta=\theta(x,a_1)_\theta+1,	
	\end{equation*}
	which is true by similar arguments as in Case 1. 
\end{proof} 

\begin{rem}
	Let $\Sigma=\{0,1,\dots,q-1\}$, $\F$ be minimal collection of words with the longest word having length $p\ge 2$. Let $x=x_1x_2\dots x_{p-1}$ be an allowed word of length $p-1$ in $\Sigma_\F$. Fix a forbidden word $a\in\F$. Consider the following two cases:\\
	i) If $x$ is such that for any $\beta\in\Sigma$, the word $x\beta=x_1x_2\dots x_{p-1}\beta$ of length $p$ does not end with $a$, then 
	\begin{eqnarray}\label{eq:corr}
		\sum_{\beta=0}^{q-1}(x_\beta,a)_z=z(x,a)_z+1,
	\end{eqnarray}
	where $x_\beta=x_2\dots x_{p-1}\beta$. \\	
	ii) If there exists a $\beta_0\in\Sigma$ such that $x\beta_0$ ends with $a$, then $x\beta_0=x_1x_2\dots x_{p-1}\beta_0$ is not allowed in $\Sigma_\F$. Moreover since $\F$ is a minimal collection, for these $x,\beta_0$, the forbidden word $a$ is unique. Also
	\begin{equation}\label{eq:corr1}
		\sum_{\beta=0,\ \beta\ne\beta_0}^{q-1}(x_\beta,a)_z+(a,a)_z=z(x,a)_z+1.	
	\end{equation}
	When $a$ has length $p$, the validity of Equation~\eqref{eq:corr} is covered in the proof of Theorem~\ref{lemma:one_word} and a similar argument can be used to prove Equation~\eqref{eq:corr1}.\\     
	When the length of $a$ is at most $p-1$, consider a new word $x'=x_{p-k+1}\dots x_{p-1}$ of length $k-1$ where $|a|=k$. Note that $x$ ends with $x'$. Since $x$ is allowed,
	\[(x,a)_z=(x',a)_z, \ \ (x_\beta,a)_z=(x_\beta',a)_z,
	\]
	where $x_\beta=x_2\dots x_{p-1}\beta$ and $x'_\beta=x_{p-k+2}\dots x_{p-1}\beta$.	
\end{rem} 

Now we consider the general situation when $\Sigma_\F$ is an irreducible subshift of finite type where $\F=\{a_1,\dots,a_s\}$ is a minimal collection with the longest word having length $p\ge 2$. We prove that the vectors $v=(v_x)_x$ and $u=(u_x)_x$, as defined in~\eqref{eq:ux_vy}, are right and left eigenvectors, respectively, of the adjacency matrix $A$ corresponding to the Perron root $\theta$.

\begin{thm}\label{thm:eig_vec}
	The vectors $v=(v_x)_x$ and $u=(u_x)_x$ are right and left eigenvectors, respectively, of $A$ corresponding to the Perron root $\theta$.
\end{thm}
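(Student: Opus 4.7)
I will prove $Av = \theta v$; the claim $A^T u = \theta u$ follows by applying the same argument to the reversed subshift via the identity $(\hat{a_i},\hat{a_j})_z = (a_j,a_i)_z$ used in the proof of Corollary~\ref{thm:formG}, which swaps the roles of $\mathcal{C}_j(\theta)$ and $\mathcal{R}_i(\theta)$. Fix an allowed word $x$ of length $p-1$; its successors in the graph of $A$ are the words $x_\beta := x_2\dots x_{p-1}\beta$ for $\beta \in \Sigma_x := \{\beta \in \Sigma : x\beta \text{ is allowed in } \Sigma_\F\}$. Because $\F$ is minimal, each $\beta \in \Sigma \setminus \Sigma_x$ determines a unique $a_k \in \F$ that is a suffix of $x\beta$, giving a bijection between $\Sigma \setminus \Sigma_x$ and the index set $J := \{k : \text{some } \beta \text{ makes } x\beta \text{ end with } a_k\}$, denoted $k \leftrightarrow \beta_k$; in particular $|\Sigma_x| = q - |J|$.

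Expanding $v_{x_\beta} = 1 - \sum_j \mathcal{C}_j(\theta)(x_\beta, a_j)_\theta$ and applying the correlation identities from the remark following Lemma~\ref{lemma:one_word} to $\sum_{\beta \in \Sigma}(x_\beta, a_j)_\theta$ for each $j$ (its Case~1 when $j \notin J$ and its Case~2 when $j \in J$), then subtracting the contributions from $\beta \in \Sigma \setminus \Sigma_x$, I obtain in both situations
\[
\sum_{\beta \in \Sigma_x}(x_\beta, a_j)_\theta - \theta(x, a_j)_\theta = 1 - [j \in J]\,(a_j, a_j)_\theta - \sum_{k \in J,\, k \ne j}(x_{\beta_k}, a_j)_\theta.
\]
Multiplying by $\mathcal{C}_j(\theta)$, summing over $j$, and using $\sum_j \mathcal{C}_j(\theta) = r(\theta) = q - \theta$ (Lemma~\ref{lemma:r_sing}) together with $|\Sigma_x| = q - |J|$, the equation $(Av)_x = \theta v_x$ reduces to showing, for each $k \in J$,
\[
\sum_j \mathcal{C}_j(\theta)(x_{\beta_k}, a_j)_\theta + \mathcal{C}_k(\theta)\bigl[(a_k, a_k)_\theta - (x_{\beta_k}, a_k)_\theta\bigr] = 1. \qquad (\ast)
\]

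For $(\ast)$ I will use two ingredients. The first is the matrix identity $\mathbbm{1}^T \mathcal{M}(\theta)^{-1}\mathcal{M}(\theta) = \mathbbm{1}^T$, which yields $\sum_j \mathcal{C}_j(\theta)(a_k, a_j)_\theta = 1$ for every $k$. The second is the structural observation that $x\beta_k$ contains no word from $\F$ as a subword other than the trailing $a_k$: indeed $x$ itself is allowed, and any \emph{new} subword of $x\beta_k$ must end at position $p$, which by reducedness of $\F$ admits at most one member of $\F$, namely $a_k$. Writing $w := x_{\beta_k}$, I claim that $(w, a_j)_z - (a_k, a_j)_z$ equals $-\delta_{jk}\, z^{p-1}$ when $|a_k| = p$ and vanishes identically when $|a_k| \le p - 1$. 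Granted this, $(\ast)$ collapses: in the first case the two $\theta^{p-1}\mathcal{C}_k(\theta)$ correction terms cancel, and in the second the bracket is zero, leaving $\sum_j \mathcal{C}_j(\theta)(w, a_j)_\theta = \sum_j \mathcal{C}_j(\theta)(a_k, a_j)_\theta = 1$.

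The main obstacle is establishing the claim about $(w, a_j)_z$. When $|a_k| = p$ one has $w = \widetilde{a_k}$, and the only coefficient of $(\widetilde{a_k}, a_j)_z$ that can differ from the corresponding coefficient of $(a_k, a_j)_z$ is the $\ell = 0$ coefficient of the latter; by reducedness this is non-zero only when $j = k$, contributing exactly $\delta_{jk}\, z^{p-1}$. When $|a_k| \le p-1$, write $w = u a_k$ with $u$ of length $m := p - 1 - |a_k|$, a subword of the allowed word $x$ (so $u$ contains no word from $\F$). Reindexing the portion of $(w, a_j)_z$ coming from overlap parameters $\ell \ge m$ produces exactly $(a_k, a_j)_z$, so it suffices to show every coefficient $b_\ell^{(w,a_j)}$ with $\ell < m$ vanishes. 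A three-way sub-case analysis---overlap entirely inside $u$, crossing the $u$-$a_k$ boundary while fitting inside $w$, or extending past $w$---forces any non-zero such coefficient to place either $a_j$ as a subword of $w$ before position $m+1$, or $a_k$ as a subword of $a_j$ at a position strictly greater than $1$. Both are ruled out, respectively by the structural observation above (applied to $w$ regarded as a suffix of $x\beta_k$) and by reducedness of $\F$, completing the proof.
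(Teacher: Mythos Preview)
Your proof is correct and follows essentially the same route as the paper's: fix $x$, split over successors $x_\beta$, apply the correlation identities \eqref{eq:corr}--\eqref{eq:corr1} together with $r(\theta)=q-\theta$, and finish using $\sum_j \mathcal{C}_j(\theta)(a_k,a_j)_\theta=1$. The paper's proof asserts the crucial fact $(x_{\delta_i},a_j)_\theta=(a_i,a_j)_\theta$ for $i\ne j$ in a single line, whereas your case analysis on $|a_k|$ and the three overlap sub-cases supplies a full justification of this identity (and simultaneously handles the diagonal term via the $\delta_{jk}z^{p-1}$ correction); this extra care is the only substantive difference.
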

\begin{proof}
	Recall the symbol set $\Sigma=\{0,1,\dots,q-1\}$. Let $x=x_1x_2\dots x_{p-1}$ be an allowed word of length $p-1$ in $\Sigma_\F$. Let 
	\[
	B=\{\beta\in\Sigma\ \vert \ x\beta \text{ is allowed in }\Sigma_\F\}.
	\]
	Then $B$ is a non-empty subset of $\Sigma$ since $x$ is an allowed word in $\Sigma_\F$. Further for each $\beta\in B$, $x_\beta=x_2\dots x_{p-1}\beta$ is an allowed word in $\Sigma_\F$.
	\noindent We need to prove that
	\begin{eqnarray}\label{eq:1}
		\sum_{\beta\in B} v_{x_\beta}&=&\theta v_x,
	\end{eqnarray}	
	There are two cases:\\~\\
	\textbf{Case 1}: $B=\Sigma$. That is, $x$ is such that  $x\beta$ (hence $x_\beta$) is allowed in $\Sigma_\F$ for all $\beta\in\Sigma$. Consider
	\begin{eqnarray*}
		\left(\sum_{\beta=0}^{q-1} v_{x_\beta}\right)-\theta v_x&=& \left(\sum_{\beta=0}^{q-1} \left(v_{x_\beta}-1\right)\right)-\theta (v_x-1)+(q-\theta)\\
		&=& \left(\sum_{\beta=0}^{q-1} \left(v_{x_\beta}-1\right)\right)-\theta (v_x-1)+\sum_{j=1}^s \mathcal{C}_j(\theta)\\
		&=& \sum_{j=1}^s \mathcal{C}_j(\theta)\left(-\sum_{\beta=0}^{q-1}(x_\beta,a_j)_\theta+\theta(x,a_j)_\theta+1\right),
	\end{eqnarray*}
	which equals 0 using~\eqref{eq:corr} (the second equality holds true since $q-\theta=r(\theta)=\sum_{j=1}^s \mathcal{C}_j(\theta)$). Hence~\eqref{eq:1} follows. \\~\\
	\textbf{Case 2}: $B\neq\Sigma$. Let $\Sigma\setminus B=\{\delta_1,\dots,\delta_t\}$ and let $x\delta_1,\dots,x\delta_t$ end with the forbidden words $a_1,\dots,a_t$, respectively. Notice that for each $i=1,\dots,t$, the word $a_i\in\F$ is unique for $\delta_i$ since $\F$ is a minimal collection.	As was done in the previous case, 
	\begin{eqnarray}\label{eq:2}
		\left(\sum_{\beta\in B} v_{x_\beta}\right)-\theta v_x
		&=& \left(\sum_{\beta\in B} \left(v_{x_\beta}-1\right)\right)-\theta (v_x-1)+(q-\theta)-t\nonumber\\
		&=& \sum_{j=1}^s \mathcal{C}_j(\theta)\left(-\sum_{\beta\in B}(x_\beta,a_j)_\theta+\theta(x,a_j)_\theta+1\right)-t.
	\end{eqnarray}
	When $j=1,\dots,t$, using~\eqref{eq:corr1}, we obtain
	\begin{equation}\label{eq:4}
		1+\theta(x,a_j)_\theta=(a_j,a_j)_\theta+\sum_{\beta=0,\ \beta\ne \delta_j}^{q-1}(x_\beta,a_j)_\theta,
	\end{equation}
	and when $j=t+1,\dots,s$, using~\eqref{eq:corr},
	\begin{eqnarray}\label{eq:5}
		1+\theta(x,a_j)_\theta&=&\sum_{\beta=0}^{q-1} (x_\beta,a_j)_\theta.
	\end{eqnarray}
	Using~\eqref{eq:4} and~\eqref{eq:5}, from~\eqref{eq:2},
	\begin{eqnarray}\label{eq:3}
		& & \sum_{j=1}^s \mathcal{C}_j(\theta)\left(-\sum_{\beta\in B}(x_\beta,a_j)_\theta+\theta(x,a_j)_\theta+1\right)\nonumber\\
		&=& \sum_{j=1}^t \mathcal{C}_j(\theta)\left((a_j,a_j)_\theta+\sum_{i=1,\ i\ne j}^{t}(x_{\delta_i},a_j)_\theta\right)+ \sum_{j=t+1}^s \mathcal{C}_j(\theta)\left(\sum_{i=1}^t (x_{\delta_i},a_j)_\theta\right)\nonumber\\
		&=& \sum_{j=1}^t \mathcal{C}_j(\theta)\left(\sum_{i=1}^{t} (a_i,a_j)_\theta\right)+ \sum_{j=t+1}^s \mathcal{C}_j(\theta)\left(\sum_{i=1}^t (a_i,a_j)_\theta\right)\nonumber\\
		&=& \sum_{i=1}^t \sum_{j=1}^s (a_i,a_j)_\theta \mathcal{C}_j(\theta),
	\end{eqnarray}
	since $(x_{\delta_i},a_j)_\theta=(a_i,a_j)_\theta$, for all $i\ne j$.\\~\\
	Finally, $\sum_{j=1}^s (a_i,a_j)_\theta \mathcal{C}_j(\theta)=\sum_{j=1}^s \mathcal{M}_{j,i}(\theta)\sum_{k=1}^s \mathcal{M}^{-1}_{k,j}(\theta)= \sum_{k=1}^s\delta_{i,k}=1$, for all $i=1,\dots,t$, and thus~\eqref{eq:3} equals $t$, which further implies that~\eqref{eq:2} equals 0. 
\end{proof}

\begin{rem}
	This proof of Theorem~\ref{thm:eig_vec} does not assume the irreducibility of $A$. Hence whenever $\mathcal{R}_i(\theta), \mathcal{C}_i(\theta)$ are well defined, Theorem~\ref{thm:eig_vec} gives an
	expression for a left/right Perron vector of the adjacency matrix of any subshift of finite type. However, all the entries of $u$, also $v$, need not be positive when the adjacency matrix is reducible.
\end{rem}

\section{Normalizing factor for Perron eigenvectors}
In this section, we use the concept of the local escape rate from ergodic theory to find the normalizing factor for the eigenvectors obtained in Theorem~\ref{thm:eig_vec}. The results in this section require $\Sigma_\F$ to be an irreducible subshift with positive topological entropy, that is, the Perron root is strictly bigger than one.

We will now define the concept of escape rate in the setting of a subshift of finite type. The notion of escape rate is more general than this but will not be needed in full generality for this work. Thus we will restrict ourselves to the subshifts of finite type.

\begin{Def}
	Consider an irreducible subshift of finite type $\Sigma_\F$. Let $\mathcal{G}$ be another non-empty finite collection of allowed words from $\Sigma_\F$. Consider the hole $H_{\mathcal{G}}=\bigcup_{w\in\mathcal{G}}C_w$ in $\Sigma_{\F}$, where $C_w\subseteq\Sigma_\F$ denotes the cylinder based at $w$. The \emph{escape rate} denotes the rate at which the orbits escape into the hole and is defined as 
	\[
	\rho(H_{\mathcal{G}}) := -\lim_{n\rightarrow \infty} \dfrac{1}{n} \ln\mu(\mathcal{W}_n(\mathcal{G})),
	\]
	if the limit exists, where $\mathcal{W}_n(\mathcal{G})$ denotes the collection of all sequences in $\Sigma_{\F}$ which do not include words from $\mathcal{G}$ as subwords in their first $n$ positions, and $\mu$ denotes the Parry measure.
\end{Def} 

\noindent In the given setting, the limit exists and is given by the following result.\\

\begin{thm}~\cite[Theorem 3.1]{Product}\label{thm:esc_rate}
	The escape rate into the hole $H_\mathcal{G}$ satisfies $\rho(H_{\mathcal{G}})= \ln (\theta/\lambda)>0$, where $\ln\theta$ and $\ln\lambda$ are topological entropies of $\Sigma_{\F}$ and $\Sigma_{\F\cup \mathcal{G}}$, respectively. 
\end{thm}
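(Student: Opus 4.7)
The plan is to decompose $\mathcal{W}_n(\mathcal{G})$ into cylinders, apply the Parry-measure formula \eqref{eq:pmold} to bound the measure of each cylinder, and then reduce the asymptotics to counting allowed words in $\Sigma_{\F\cup\mathcal{G}}$, whose exponential growth rate is $\lambda$ by the same Jordan-form argument used in Step 1 of the proof of Theorem~\ref{thm:same_roots}.

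First I would observe that $\mathcal{W}_n(\mathcal{G})=\bigsqcup_w C_w$, where the disjoint union ranges over all words $w$ of length $n$ allowed in $\Sigma_\F$ that contain no subword from $\mathcal{G}$, i.e., over all length-$n$ words allowed in $\Sigma_{\F\cup\mathcal{G}}$. For $n\ge p$, formula \eqref{eq:pmold} gives
\[
\mu(C_w)=\frac{U_{w_1\cdots w_{p-1}}\,V_{w_{n-p+2}\cdots w_n}}{\theta^{n-p+1}},
\]
where $U,V$ are the normalized left and right Perron eigenvectors of the adjacency matrix of $\Sigma_\F$. Irreducibility of $\Sigma_\F$ forces $U,V$ to be entrywise positive, so there exist constants $0<m\le M<\infty$ with $m\le U_xV_y\le M$ for all allowed $(p-1)$-words $x,y$. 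Letting $f_{\F\cup\mathcal{G}}(n)$ denote the number of length-$n$ words allowed in $\Sigma_{\F\cup\mathcal{G}}$, summing the Parry-measure formula over $w$ yields the sandwich
\[
\frac{m\,f_{\F\cup\mathcal{G}}(n)}{\theta^{n-p+1}}\;\le\;\mu(\mathcal{W}_n(\mathcal{G}))\;\le\;\frac{M\,f_{\F\cup\mathcal{G}}(n)}{\theta^{n-p+1}}.
\]
The Jordan-form/Perron-Frobenius argument (applied blockwise to the irreducible diagonal blocks should $\Sigma_{\F\cup\mathcal{G}}$ be reducible) yields $\lim_{n\to\infty}(1/n)\ln f_{\F\cup\mathcal{G}}(n)=\ln\lambda$, so taking logarithms in the sandwich and dividing by $n$ gives $\lim_{n\to\infty}(1/n)\ln\mu(\mathcal{W}_n(\mathcal{G}))=\ln\lambda-\ln\theta$, i.e., $\rho(H_\mathcal{G})=\ln(\theta/\lambda)$ exists.

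The main obstacle I anticipate is the strict positivity $\rho(H_\mathcal{G})>0$, equivalent to $\lambda<\theta$. Since $\mathcal{G}$ is non-empty and consists of allowed words of the irreducible subshift $\Sigma_\F$, every word in $\mathcal{G}$ extends by irreducibility to infinitely many sequences of $\Sigma_\F$ that are excluded from $\Sigma_{\F\cup\mathcal{G}}$, so $\Sigma_{\F\cup\mathcal{G}}\subsetneq\Sigma_\F$. The strict entropy decrease under proper inclusion of subshifts obtained from an irreducible SFT---equivalently, strict monotonicity of the Perron root of an irreducible non-negative matrix upon removing a $1$ from its adjacency matrix---then yields $\lambda<\theta$. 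This monotonicity is the one input that goes beyond the counting/sandwich arguments above, and I would invoke it as a standard fact from the theory of irreducible subshifts of finite type.
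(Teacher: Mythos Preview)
The paper does not prove this statement; it is quoted verbatim from~\cite[Theorem 3.1]{Product} and used as a black box in the proof of Theorem~\ref{thm:eig_prod}. So there is no in-paper argument to compare against.

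Your proposed proof is correct and self-contained. The decomposition $\mathcal{W}_n(\mathcal{G})=\bigsqcup_w C_w$ over length-$n$ words avoiding $\F\cup\mathcal{G}$ is exactly right (irreducibility of $\Sigma_\F$ guarantees each such $C_w$ is non-empty), and the sandwich via the uniform bounds $m\le U_xV_y\le M$ cleanly reduces the question to the growth rate of $f_{\F\cup\mathcal{G}}(n)$. One small point worth making explicit: when $\Sigma_{\F\cup\mathcal{G}}$ is reducible, the count $f_{\F\cup\mathcal{G}}(n)$ of words \emph{avoiding} $\F\cup\mathcal{G}$ can exceed the number of words \emph{allowed} in the subshift $\Sigma_{\F\cup\mathcal{G}}$ (some words may fail to extend to infinite sequences). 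This is harmless, since both counts are controlled by entry sums of powers of the same adjacency matrix, and $\lim_n (1/n)\ln\sum_{i,j}(B^n)_{ij}=\ln\rho(B)=\ln\lambda$ for any non-negative matrix $B$; your Jordan-form remark covers this. The strict inequality $\lambda<\theta$ follows, as you say, from the standard Perron--Frobenius fact that deleting an entry from an irreducible non-negative matrix strictly lowers the spectral radius (equivalently, strict entropy monotonicity for proper subshifts of an irreducible SFT, cf.~\cite[Cor.~4.4.9]{LM_book}).
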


Now we define the concept of the local escape rate. Let $\alpha=\alpha_1\alpha_2\dots$ be a sequence in $\Sigma_{\F}$. The \emph{local escape rate} around $\alpha$ is defined as 
\[
\rho(\alpha)=\lim_{n\to\infty}\frac{\rho(H_{\F_n})}{\mu(H_{\F_n})},
\]
if it exists, where $\F_n=\{w^n=\alpha_1\alpha_2\dots \alpha_n\}$ and $H_{\F_n}=C_{w^n}$, the cylinder in $\Sigma_\F$ based at the word $w^n$. Note that $\bigcap_n H_{\F_n}=\{\alpha\}$. In~\cite{gibbs}, Ferguson and Pollicott gave an explicit formula for the local escape rate for a subshift of finite type. We now state their result in our setting.\\

\begin{thm}~\cite[Corollary 5.4.]{gibbs}\label{thm:local_erate}
	Let $\Sigma_\F$ be an irreducible subshift of finite type with positive topological entropy $\ln\theta$. Let $\alpha\in\Sigma_{\F}$. Then 
	\[ \rho(\alpha)=\begin{cases} 
		1 & \text{if $\alpha$ is non-periodic,} \\
		1-\theta^{-m} & \text{if $\alpha$ is periodic with period $m$}.
	\end{cases}
	\]
\end{thm}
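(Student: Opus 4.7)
The plan is to convert the local escape rate into a spectral quantity via Theorem~\ref{thm:esc_rate}, and then evaluate the resulting limit using the correlation-polynomial machinery developed earlier in the paper. Write $w^n=\alpha_1\cdots\alpha_n$ and let $\theta_n$ denote the Perron root of the adjacency matrix of $\Sigma_{\F\cup\{w^n\}}$, so that Theorem~\ref{thm:esc_rate} gives $\rho(H_{\F_n})=\ln(\theta/\theta_n)$. Since $\theta_n<\theta$ and $\theta_n\to\theta$, I would apply a first-order Taylor expansion of $z-q+r_{w^n}(z)=0$ at $z=\theta$, combined with the identity $\lim_{z\to\theta}(r_{w^n}(z)-r(z))=\theta/F_{w^n}(\theta)$ (established in the proof of Lemma~\ref{lemma:mu_alt_nonzero}) and the positivity $1+r'(\theta)>0$ from Lemma~\ref{lemma:r'}, to obtain $\rho(H_{\F_n})\sim 1/\bigl(F_{w^n}(\theta)(1+r'(\theta))\bigr)$ as $n\to\infty$.

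Next I would compute the product $\mu(C_{w^n})\,F_{w^n}(\theta)$ in closed form. The $2\times 2$ block-determinant formula applied to $\mathcal{M}_{w^n}(z)$ produces $\mathcal{D}_{w^n}(\theta)=\mathcal{D}(\theta)\bigl[(w^n,w^n)_\theta-Y_n(\theta)\mathcal{M}(\theta)^{-1}X_n(\theta)\bigr]$, where $X_n,Y_n$ are the cross-correlation blocks of $w^n$ against $\F$; Lemma~\ref{lemma:mu_alt} then gives $F_{w^n}(\theta)=\theta\,\mathcal{D}_{w^n}(\theta)/\bigl(u_xv_{y(n)}\,\mathcal{D}(\theta)\bigr)$, where $x$ and $y(n)$ are the length-$(p-1)$ prefix and suffix of $w^n$. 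Substituting the Parry-measure formula $\mu(C_{w^n})=U_xV_{y(n)}\theta^{-(n-p+1)}$ and using $U_xV_{y(n)}=u_xv_{y(n)}/(u^Tv)$ (since $U,V$ are rescalings of $u,v$ normalized by $U^TV=1$), the factor $u_xv_{y(n)}$ cancels and one is left with
$$
\frac{\rho(H_{\F_n})}{\mu(C_{w^n})} \;\sim\;\frac{u^Tv\,\theta^{1-p}}{1+r'(\theta)}\;\cdot\;\frac{\theta^{n-1}}{(w^n,w^n)_\theta - Y_n(\theta)\mathcal{M}(\theta)^{-1}X_n(\theta)}.
$$

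The periodicity dichotomy emerges from the auto-correlation. Because no $a_j\in\F$ can occur as a subword of the allowed word $w^n$, each entry of $X_n(\theta)$ and $Y_n(\theta)$ is a polynomial of degree at most $p-2$ depending only on $x$ and $y(n)$; hence $Y_n(\theta)\mathcal{M}(\theta)^{-1}X_n(\theta)$ is uniformly bounded and disappears after division by $\theta^{n-1}$. Writing $(w^n,w^n)_\theta=\theta^{n-1}\sum_{\ell=0}^{n-1}b_\ell\theta^{-\ell}$, where $b_\ell=1$ iff $w^n$ has period $\ell$: if $\alpha$ is non-periodic then for every fixed $\ell\ge 1$ one has $b_\ell=0$ for large $n$, so dominated convergence yields $(w^n,w^n)_\theta/\theta^{n-1}\to 1$; if $\alpha$ has minimal period $m$ then $b_\ell=1$ precisely when $m\mid\ell$, so the sum converges to $\sum_{k\ge 0}\theta^{-km}=1/(1-\theta^{-m})$.

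The main obstacle is the prefactor $u^Tv\,\theta^{1-p}/(1+r'(\theta))$: identifying it with $1$ is exactly Theorem~\ref{thm:eig_prod}, which is itself deduced from Theorem~\ref{thm:local_erate} later in the paper, so invoking it here would be circular. A self-contained proof along this route needs an independent evaluation of $u^Tv$ -- the cleanest option being a continuity/deformation argument starting from the full shift $\F=\emptyset$, where $u=v=\mathbbm{1}$, $r\equiv 0$, and the identity is immediate. A fundamentally different alternative is to abandon the combinatorial route and invoke the Keller--Liverani spectral perturbation theorem for the transfer operator of $(\sigma,\mu)$, which identifies $\rho(\alpha)$ directly as the extremal index at $\alpha$, the $1$ versus $1-\theta^{-m}$ dichotomy then being the standard extremal-index computation driven by the Markov $m$-step return probability $\theta^{-m}$.
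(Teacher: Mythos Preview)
The paper does not prove this statement at all: it is quoted verbatim from Ferguson--Pollicott~\cite[Corollary 5.4]{gibbs} and used as a black box. So there is no ``paper's own proof'' to compare against; the only question is whether your proposal stands on its own.

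It does not, and you have correctly identified why. Your combinatorial computation reduces $\rho(\alpha)$ to the constant $u^Tv\,\theta^{1-p}/(1+r'(\theta))$ times the auto-correlation limit, and the auto-correlation analysis is fine (indeed the paper reproduces exactly that computation in Lemma~\ref{lemma:local_erate}, \emph{after} Theorem~\ref{thm:local_erate} is in hand). But identifying that constant with $1$ is precisely Theorem~\ref{thm:eig_prod}, whose proof in the paper invokes Theorem~\ref{thm:local_erate}. So within the paper's logical structure your argument is circular, as you say.

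Of your two escape routes, the first is not a proof sketch but a hope: there is no continuous parameter connecting $\F=\emptyset$ to a given $\F$, so ``continuity/deformation from the full shift'' has no obvious meaning here---the forbidden set is discrete and the quantities $\theta$, $u$, $v$, $r$ jump when a word is added. You would need a genuinely new idea to evaluate $u^Tv$ independently. Your second route, spectral perturbation for the transfer operator \`a la Keller--Liverani, is exactly the method Ferguson--Pollicott use to prove the cited result; so that is not an alternative proof but a pointer back to the original source. In short, the proposal does not supply an independent proof, and the honest conclusion is that the result must be imported, which is what the paper does.
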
 

\noindent Using this, we obtain a relationship between the local escape rate around $\alpha$ and the auto-correlation polynomial of $w^n$.

\begin{lemma}\label{lemma:local_erate}
	Let $\Sigma_\F$ be an irreducible subshift of finite type with positive topological entropy $\ln\theta$. Let $\alpha=\alpha_1\alpha_2\dots\in\Sigma_{\F}$, and $w^n=\alpha_1\alpha_2\dots \alpha_n$, for all $n\ge 1$. Then
	\[
	\lim_{n \to \infty}\theta^{-n+1}(w^n,w^n)_\theta=\frac{1}{ \rho(\alpha)}.
	\] 
\end{lemma}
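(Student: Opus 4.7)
The plan is to expand the auto-correlation polynomial via Definition~\ref{def:corr} and compute the limit term by term, treating the two cases of Theorem~\ref{thm:local_erate} separately. Writing $(w^n,w^n)_\theta=\sum_{\ell=0}^{n-1}b_\ell^{(n)}\theta^{\,n-1-\ell}$, where $b_\ell^{(n)}\in\{0,1\}$ equals $1$ iff $\alpha_i=\alpha_{i+\ell}$ for all $i=1,\dots,n-\ell$, one obtains
\[
\theta^{-n+1}(w^n,w^n)_\theta=\sum_{\ell=0}^{n-1}b_\ell^{(n)}\theta^{-\ell}.
\]
The trivial shift $\ell=0$ always contributes $1$, and since $\theta>1$ the whole series is termwise dominated by the convergent geometric series $\sum_{\ell\ge 0}\theta^{-\ell}$, which legitimizes the limit manipulations below.

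In the non-periodic case, I would argue by an $\varepsilon/2$ split. For each fixed $\ell\ge 1$, non-periodicity of $\alpha$ supplies an index $i_\ell$ with $\alpha_{i_\ell}\ne\alpha_{i_\ell+\ell}$, so $b_\ell^{(n)}=0$ once $n>i_\ell+\ell$. Fixing a cut-off $L$, the tail $\sum_{\ell>L}b_\ell^{(n)}\theta^{-\ell}$ is bounded uniformly by $\theta^{-L}/(\theta-1)$, while the head $\sum_{\ell=1}^{L}b_\ell^{(n)}\theta^{-\ell}$ is zero for $n$ large enough. Letting first $n\to\infty$ and then $L\to\infty$ gives the limit $1=1/\rho(\alpha)$.

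In the periodic case, with $m$ the minimal period of $\alpha$, I would split the range of $\ell$ at $n-m$. Global periodicity of $\alpha$ immediately yields $b_\ell^{(n)}=1$ whenever $m\mid\ell$. For $\ell\le n-m$ with $m\nmid\ell$, the string $w^n$ would have the two string-periods $m$ and $\ell$ with $n\ge m+\ell$; by the classical Fine--Wilf theorem $w^n$ then has period $d:=\gcd(m,\ell)<m$, and since $d\mid m$ this period can be propagated through $\alpha_1\dots\alpha_m$ and then extended via the global period $m$ to a genuine sequence-period $d$ of $\alpha$, contradicting the minimality of $m$. Hence $b_\ell^{(n)}=0$ on this range. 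The residual boundary shifts $\ell\in\{n-m+1,\dots,n-1\}$ contribute at most $(m-1)\theta^{-(n-m+1)}\to 0$, so
\[
\lim_{n\to\infty}\theta^{-n+1}(w^n,w^n)_\theta=\sum_{k=0}^{\infty}\theta^{-km}=\frac{1}{1-\theta^{-m}}=\frac{1}{\rho(\alpha)}.
\]

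The main obstacle lies in the periodic case: establishing cleanly that any string-period of $w^n$ strictly smaller than $m$ must lift to a genuine sequence-period of $\alpha$. This requires Fine--Wilf on $w^n$ (whose length hypothesis forces the restriction $\ell\le n-m$), together with the divisibility $d\mid m$ to iterate the global period $m$ onto the prefix $\alpha_1\dots\alpha_m$ equipped with period $d$, producing the desired contradiction with minimality of $m$. Isolating the $m-1$ boundary shifts $\ell>n-m$ (where Fine--Wilf is not available) and absorbing them into a vanishing geometric tail is what makes the split work.
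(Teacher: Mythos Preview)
Your proof is correct and follows essentially the same approach as the paper: expand the auto-correlation polynomial, observe that $\theta^{-n+1}(w^n,w^n)_\theta=\sum_{\ell\ge 0}b_\ell^{(n)}\theta^{-\ell}$ is dominated by a convergent geometric series, and compute the termwise limits of the coefficients $b_\ell^{(n)}$ in the two cases of Theorem~\ref{thm:local_erate}. The paper's argument is terser---it simply asserts $\lim_n b_{n,i}=0$ in the non-periodic case and $\lim_n b_{n,i}=\mathbf{1}_{\{m\mid i\}}$ in the periodic case without spelling out the $\varepsilon/2$ split or invoking Fine--Wilf---so your version is in fact a more carefully justified rendition of the same proof.
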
 
\begin{proof}
	Let the autocorrelation polynomial of $w^n$ be
	\[
	(w^{n},w^{n})_z=z^{n-1}+\sum_{i=1}^{n-1}b_{n,i}z^{n-1-i},
	\]
	where $b_{n,i}$ is either 0 or 1. When $\alpha$ is non-periodic, $\lim_{n \to \infty}b_{n,i}=0$ for all $i$, and thus $\lim_{n \to \infty}\sum_{i=1}^{n-1}b_{n,i}\theta^{-i}=0$. Similarly when $\alpha$ is periodic with period $m$, $\lim_{n \to \infty} \sum_{i=1}^{n-1}b_{n,i}\theta^{-i}=\sum_{k=1}^\infty \theta^{-km}=\frac{1}{1-\theta^{-m}}-1$. By 
	Theorem~\ref{thm:local_erate}, in both the cases, $\lim_{n \to \infty}\theta^{-n+1}(w^n,w^n)_\theta=\frac{1}{ \rho(\alpha)}$.
\end{proof}

\begin{rem}\label{rem:g_alpha}
	Following the proof of the previous result, for $|z|>1$, \[\lim_{n \to \infty}z^{-n+1}(w^n,w^n)_z=g_\alpha(z),\] where 
	\[ 
	g_\alpha(z)=\begin{cases} 
		1 & \text{if $\alpha$ is non-periodic,} \\
		\left(1-z^{-m}\right)^{-1} & \text{if $\alpha$ is periodic with period $m$}.
	\end{cases}
	\]
	Also, $g_\alpha(\theta)=\frac{1}{\rho(\alpha)}$.
\end{rem}

The next result gives the normalizing factor for the eigenvectors obtained in Theorem~\ref{thm:eig_vec}.

\begin{thm}\label{thm:eig_prod}
	With notations as in Theorem~\ref{thm:eig_vec}, if $\Sigma_{\F}$ is an irreducible subshift with positive topological entropy, then 
	\begin{eqnarray*}
		u^Tv&=&\theta^{p-1} \left(1+r'(\theta)\right),
	\end{eqnarray*}
	where $p\ge 2$ is the length of longest word in the minimal collection $\F$.
\end{thm}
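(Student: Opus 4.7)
The plan is to extract $u^Tv$ from the normalization of the Parry measure by computing the ratio $\rho(H_{\F_n})/\mu(H_{\F_n})$ in two different ways and matching the result against the known value of $\rho(\alpha)$. Fix any $\alpha=\alpha_1\alpha_2\dots\in\Sigma_\F$ and for $n\ge p$ set $w^n=\alpha_1\dots\alpha_n$, $x=\alpha_1\dots\alpha_{p-1}$, $y_n=\alpha_{n-p+2}\dots\alpha_n$, and let $\theta_n$ be the Perron root of the adjacency matrix of $\Sigma_{\F\cup\{w^n\}}$.

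For the escape rate I would subtract the defining equations $(\theta-q)+r(\theta)=0$ and $(\theta_n-q)+r_{w^n}(\theta_n)=0$ and apply the mean value theorem to $r_{w^n}$ on $[\theta_n,\theta]$, yielding $(\theta-\theta_n)(1+r'_{w^n}(\xi_n))=r_{w^n}(\theta)-r(\theta)=\theta/F_{w^n}(\theta)$ for some $\xi_n\in(\theta_n,\theta)$, where the last equality is~\eqref{eq:form1}. A Schur-complement computation exhibits $r_{w^n}-r$ as an explicit rational function whose magnitude near $\theta$ is controlled by $1/S_n(z)$ with $S_n(z)=(w^n,w^n)_z-Y_n\mathcal{M}^{-1}X_n\sim \theta^{n-1}g_\alpha(\theta)$ by Remark~\ref{rem:g_alpha}, so $r_{w^n}\to r$ locally uniformly near $\theta$ together with derivatives, giving $r'_{w^n}(\xi_n)\to r'(\theta)$. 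Combining with Theorem~\ref{thm:esc_rate} and $\ln(\theta/\theta_n)\sim(\theta-\theta_n)/\theta$, I obtain
\[
\rho(H_{\F_n})\ \sim\ \dfrac{1}{F_{w^n}(\theta)(1+r'(\theta))}.
\]

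For the measure of the hole I would first invoke Lemma~\ref{lemma:mu_alt}. Factoring $(\mathcal{D}\mathcal{S}_{w^n}-\mathcal{S}\mathcal{D}_{w^n})/\mathcal{D}^2=(r_{w^n}-r)\mathcal{D}_{w^n}/\mathcal{D}$ and using~\eqref{eq:form1},
\[
u_xv_{y_n}\ =\ \dfrac{\theta}{F_{w^n}(\theta)}\cdot\lim_{z\to\theta}\dfrac{\mathcal{D}_{w^n}(z)}{\mathcal{D}(z)}.
\]
Expanding $\mathcal{D}_{w^n}(z)=(w^n,w^n)_z\mathcal{D}(z)-Y_n(z)\,\mathrm{Adj}(\mathcal{M}(z))\,X_n(z)$, the auto-correlation grows like $\theta^{n-1}g_\alpha(\theta)$ and dominates the bounded correction (since $X_n,Y_n$ depend only on the fixed $x$ and on $y_n$ drawn from a finite set), giving $\lim_{z\to\theta}\mathcal{D}_{w^n}(z)/\mathcal{D}(z)\sim \theta^{n-1}g_\alpha(\theta)$ and hence $u_xv_{y_n}\sim \theta^n g_\alpha(\theta)/F_{w^n}(\theta)$. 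Next, writing the Perron-Frobenius-normalized eigenvectors as $U=u/c$, $V=v/d$ with $U^TV=1$, so that $cd=u^Tv$, the Parry formula~\eqref{eq:pmold} yields $\mu(H_{\F_n})=u_xv_{y_n}/[u^Tv\cdot \theta^{n-p+1}]$.

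Dividing, the factor $u_xv_{y_n}$ cancels and
\[
\dfrac{\rho(H_{\F_n})}{\mu(H_{\F_n})}\ \xrightarrow[n\to\infty]{}\ \dfrac{u^Tv}{\theta^{p-1}g_\alpha(\theta)(1+r'(\theta))}.
\]
By the definition of the local escape rate, Theorem~\ref{thm:local_erate}, and Remark~\ref{rem:g_alpha}, this same limit equals $\rho(\alpha)=1/g_\alpha(\theta)$; so $g_\alpha(\theta)$ cancels and the identity $u^Tv=\theta^{p-1}(1+r'(\theta))$ falls out. The main obstacle is the analytic control in the first step: promoting the mean-value identity to the clean asymptotic $\theta-\theta_n\sim \theta/[F_{w^n}(\theta)(1+r'(\theta))]$ requires locally uniform convergence of $r_{w^n}$ to $r$ with derivatives near $\theta$, which rests on the $\theta^{n-1}g_\alpha(\theta)$ growth of the Schur complement dominating the bounded correction terms. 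A secondary subtlety, that $\mathcal{D}(\theta)$ might vanish, is already neutralized by Lemma~\ref{lemma:mu_alt_nonzero}, which guarantees that $\lim_{z\to\theta}\mathcal{D}_{w^n}(z)/\mathcal{D}(z)$ exists and is positive.
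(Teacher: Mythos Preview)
Your proposal is correct and follows essentially the same strategy as the paper: compute the local escape rate $\rho(\alpha)=\lim_n\rho(H_{\F_n})/\mu(H_{\F_n})$ in two ways, once via the mean-value expansion of $r_{w^n}-r$ around $\theta$ and once via the Parry formula $\mu(H_{\F_n})=u_xv_{y_n}/(u^Tv\,\theta^{n-p+1})$, then equate with the Ferguson--Pollicott value $\rho(\alpha)=1/g_\alpha(\theta)$.

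The only real difference is organizational. The paper first \emph{constructs} a specific eventually periodic $\alpha=w\overline{\gamma y}$ and then passes to the subsequence $n_k$ along which the terminal $(p-1)$-block of $w^{n_k}$ is the fixed word $y$. Along this subsequence the vectors $X_{n_k},Y_{n_k}$ (hence the numerator $\mathcal{D}\mathcal{S}_{w^{n_k}}-\mathcal{S}\mathcal{D}_{w^{n_k}}$) are literally constant, so $r_{n_k}\to r$ and $r'_{n_k}\to r'$ uniformly on a deleted neighbourhood of $\theta$ by an entirely elementary argument (the denominator is monic of degree $\ge n_k$). You instead take an arbitrary $\alpha$ and run over all $n$, letting the terminal block $y_n$ vary; your justification that ``$X_n,Y_n$ depend only on $y_n$ drawn from a finite set'' is exactly what makes the same uniform-convergence argument go through, since the numerator then ranges over finitely many fixed polynomials. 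This is a legitimate streamlining: it avoids the explicit construction of $\alpha$ and the subsequence extraction, at the cost of carrying along the bounded variation in $y_n$. Your identification of the analytic control of $r_{w^n}\to r$ with derivatives as the crux, and your handling of the possible vanishing of $\mathcal{D}(\theta)$ via Lemma~\ref{lemma:mu_alt_nonzero}, match the paper's treatment.
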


\begin{proof}
	The expression on the right is defined by Lemma~\ref{lemma:r'}. Let $\F=\{a_1,\dots,a_s\}$ be the minimal collection of forbidden words. Let $x,y$ be two allowed words each of length $p-1$ in $\Sigma_\F$. Let $w$ be a fixed allowed word in $\Sigma_\F$ beginning with $x$ and ending with $y$. Consider a finite word $\gamma$ such that $y\gamma y$ is an allowed word in $\Sigma_{\F}$. Since the subshift is irreducible, the words $w$ and $\gamma$ exist. Define $\alpha=\alpha_1\alpha_2\dots=w\overline{\gamma y}$, where $\overline{\gamma y}$ denotes the word $\gamma y$ repeated infinitely many times. Clearly $\alpha\in\Sigma_{\F}$. Let $\F_n=\{w^n=\alpha_1\dots\alpha_n\}$ and $H_{\F_n}=C_{w^n}$, the cylinder in $\Sigma_\F$ based at the word $w^n$. \\ 
	Let $\mathcal{M}_n(z)$ be the correlation matrix function for the collection $\F\cup\F_n$, and $r_{n}(z)=\mathcal{S}_n(z)/\mathcal{D}_n(z)$, where $\mathcal{D}_n(z)$ denotes the determinant of $\mathcal{M}_{n}(z)$ and $\mathcal{S}_n(z)$ denotes the sum of the entries of the adjoint matrix of $\mathcal{M}_n(z)$. \\
	Let $\lambda_n$ be the Perron root of the adjacency matrix corresponding to $\Sigma_{\F\cup\F_n}$. Then
	\begin{equation}\label{eq:r(theta)}
		q-\lambda_n=r_n(\lambda_n), \text{ and } q-\theta=r(\theta).
	\end{equation} 
	Choose a subsequence $\{n_k\}_{k\ge 0}$ such that $w^{n_k}=\alpha_1\dots\alpha_{n_k}=w\overline{\gamma y}^{k}$, for all $k\ge 0$, where $\overline{\gamma y}^{k}$ denotes the word $\gamma y$ repeated $k$ times, that is, the subsequence, where $\alpha_1\dots\alpha_{p-1}=x$ and $\alpha_{n_k-p+2}\dots\alpha_{n_k}=y$, for all $k\ge 0$. By Remark~\ref{rem:choice_w}, observe that for all $k\ge 0$,
	\begin{eqnarray}\label{r_n-r}
		r_{n_k}(z)-r(z)&=&\dfrac{\mathcal{D}(z)\mathcal{S}_{n_k}(z)-\mathcal{D}_{n_k}(z)\mathcal{S}(z)}{\mathcal{D}(z)\mathcal{D}_{n_k}(z)}=\dfrac{\mathcal{D}(z)\mathcal{S}_w(z)-\mathcal{D}_w(z)\mathcal{S}(z)}{\mathcal{D}(z)\mathcal{D}_{n_k}(z)}.
	\end{eqnarray}
	Let $E$ be a deleted neighbourhood of $\theta$ such that $|z|>1$ for all $z\in E$ and $\mathcal{D}(z)\ne 0$ for all $z\in E$. Such a neighbourhood exists since $\theta>1$ and $\mathcal{D}(z)$ is a non-constant polynomial.\\
	\noindent Since \begin{eqnarray*}
		\mathcal{D}_{n_k}(z)&=&\mathcal{D}(z)(w^{n_k},w^{n_k})_z - ((a_1,w^{n_k})_z,\dots,(a_s,w^{n_k})_z) \\
		& & \text{Adjoint}(\mathcal{M}(z)) ((w^{n_k},a_1)_z,\dots,(w^{n_k},a_s)_z)^T\\ 
		&=& \mathcal{D}(z)(w^{n_k},w^{n_k})_z - ((a_1,w)_z,\dots,(a_s,w)_z) \\
		& & \text{Adjoint}(\mathcal{M}(z)) ((w,a_1)_z,\dots,(w,a_s)_z)^T,
	\end{eqnarray*}
	$\mathcal{D}_{n_k}(z)-\mathcal{D}(z)(w^{n_k},w^{n_k})_z$ is a polynomial, independent of the sequence $n_k$. Hence $\mathcal{D}_{n_k}(z)$ is a monic polynomial of degree at least $n_k$. Thus
	\[
	\lim_{k \to \infty}\frac{1}{\mathcal{D}_{n_k}(z)}=0, 
	\] uniformly on $E$.\\
	Therefore $r_{n_k}(z)$ converges to $r(z)$ uniformly in $E$ as $k$ tends to infinity.
	Hence $\lim_{k\to\infty} r_{n_k}(\theta)=r(\theta)$.\\
	Using the mean value theorem for derivatives,
	\begin{equation}\label{eq:r_n(theta)}
		q-\lambda_n=r_n(\lambda_n)=r_n(\theta)+(\lambda_n-\theta)r_n'(a_n),
	\end{equation}
	for some $\lambda_n<a_n<\theta$. 
	Taking the difference of~\eqref{eq:r_n(theta)} and~\eqref{eq:r(theta)}, we obtain 
	\[
	\theta-\lambda_n=r_n(\theta)-r(\theta)+(\lambda_n-\theta)r_n'(a_n),
	\]
	which gives
	\[
	\theta-\lambda_n=\frac{r_n(\theta)-r(\theta)}{1+r_n'(a_n)}.
	\]
	Set 
	\begin{eqnarray}\label{eq:Kn}
		K_n&=&\frac{r_n(\theta)-r(\theta)}{\theta\left(1+r_n'(a_n)\right)}=\lim_{z\rightarrow\theta}\frac{r_n(z)-r(z)}{z\left(1+r_n'(a_n)\right)}.
	\end{eqnarray}
	Then $K_n=1-\lambda_n/\theta$. The escape rate into the hole $H_{\F_n}$ is given by $\rho(H_{\F_n})=\ln\left(\theta/\lambda_n\right)$ by Theorem~\ref{thm:esc_rate}. Hence
	\begin{eqnarray}\label{eq:rKn}
		\rho(H_{\F_n})= -\ln(1-K_n)=K_n+\frac{K_n^2}{2}+\frac{K_n^3}{3}+\dots.
	\end{eqnarray}
	From Equation~\eqref{r_n-r}, it is clear that $(\mathcal{D}_{n_k}(r_{n_k}-r))_k$ is a constant sequence of functions. Using this fact and differentiating Equation~\eqref{r_n-r} with respect to $z$, we get $\lim_{k\to\infty}r_{n_k}'(z)=r'(z)$ uniformly on $E$.\\  
	Moreover, since $\lambda_n\rightarrow \theta$,
	\begin{eqnarray}\label{eq:r'limit}
		\lim_{k\to\infty}r_{n_k}'(a_{n_k})=\lim_{k\to\infty}r_{n_k}'(\theta)=r'(\theta),
	\end{eqnarray}
	which is not equal to $-1$, by Lemma~\ref{lemma:r'}.
	Observe that for $|z|>1$, 
	\[\lim_{k \to \infty}z^{-n_k+1}\mathcal{D}_{n_k}(z)=\lim_{k \to \infty}z^{-n_k+1}\mathcal{D}(z)(w^{n_k},w^{n_k})_z=\mathcal{D}(z)g_\alpha(z),
	\] 
	where $g_\alpha(z)=1$, if $\alpha$ is non-periodic, and $g_\alpha(z)=(1-z^{-m})^{-1}$, if $\alpha$ is periodic with period $m$, as in Remark~\ref{rem:g_alpha}. Hence
	\begin{eqnarray}\label{eq:r_w-r}
		\lim_{k \to \infty}\lim_{z\rightarrow \theta}z^{n_k-1}\left(r_{n_k}(z)-r(z)\right)&=&\lim_{k \to \infty}\lim_{z\rightarrow \theta}\frac{\mathcal{D}(z)\mathcal{S}_w(z)-\mathcal{D}_w(z)\mathcal{S}(z)}{z^{-n_k+1}\mathcal{D}_{n_k}(z)\mathcal{D}(z)}\nonumber\\
		&=&\lim_{z\rightarrow \theta}\frac{\mathcal{D}(z)\mathcal{S}_w(z)-\mathcal{D}_w(z)\mathcal{S}(z)}{g_\alpha(z)\mathcal{D}(z)^2}\nonumber\\
		&=&\rho(\alpha) u_xv_y.
	\end{eqnarray}
	Since the Perron eigenvectors are given by $u$ and $v$ as in Theorem~\ref{thm:eig_vec}, the vectors $U=u/\sqrt{u^Tv}$ and $V=v/\sqrt{u^Tv}$ are normalized Perron eigenvectors. Hence the Parry measure of the cylinder $H_{\F_n}$ is given by 
	\[
	\mu(H_{\F_n}) = \dfrac{U_xV_y}{\theta^{n-p+1}}=  \dfrac{u_{x}v_y}{\theta^{n-p+1}u^Tv}.
	\]
	Now using~\eqref{eq:Kn},~\eqref{eq:r'limit} and~\eqref{eq:r_w-r},
	\begin{eqnarray}\label{eq:limitKn}
		\lim_{k \to \infty}\frac{K_{n_k}}{\mu(H_{\F_{n_k}})}	&=&
		\lim_{k \to \infty}\frac{K_{n_k}\theta^{{n_k}-p+1}u^Tv}{u_xv_y}\nonumber\\
		&=&\dfrac{u^Tv}{u_xv_y}\lim_{k \to \infty}\lim_{z\rightarrow\theta}\frac{r_{n_k}(z)-r(z)}{z\left(1+r_{n_k}'(a_{n_k})\right)} z^{{n_k}-p+1}\nonumber\\
		&=&\dfrac{u^Tv}{\theta^{p-1}u_xv_y(1+r'(\theta))}\lim_{k \to \infty}\lim_{z\rightarrow\theta}(r_{n_k}(z)-r(z))z^{{n_k}-1}\nonumber\\
		&=& \rho(\alpha)\dfrac{u^Tv}{\theta^{p-1}\left(1+r'(\theta)\right)}.
	\end{eqnarray}
	Since $\lambda_n\rightarrow \theta$, $K_n\rightarrow 0$. Thus using~\eqref{eq:limitKn}, $\lim_{k \to \infty}K_{n_k}^j/\mu\left(H_{\F_{n_k}}\right)=0$, for all $j\ge 2$. Therefore, using Equation~\ref{eq:rKn},
	\begin{eqnarray*}
		\rho(\alpha) = \lim_{k \to \infty}\dfrac{\rho\left(H_{\F_{n_k}}\right)}{\mu\left(H_{\F_{n_k}}\right)}= \lim_{k \to \infty}\dfrac{K_{n_k}}{\mu\left(H_{\F_{n_k}}\right)}=\rho(\alpha)\dfrac{u^Tv}{\theta^{p-1}\left(1+r'(\theta)\right)}.
	\end{eqnarray*}
	Thus we have the desired result.
\end{proof}	

Let $A$ be a primitive matrix with $\theta>1$. Let $x,y$ be two allowed words of length $p-1$ and $f_{x,y}(n)$ be the number of words of length $n$ in $\Sigma_{\F}$, starting with $x$ and ending with $y$. Let the $xy^{th}$ entry of $A^n$ be denoted as $f_{x,y}(n+p-1)$. By the Perron-Frobenius theorem, 
\[
\lim_{n \to \infty} \frac{f_{x,y}(n+p-1)}{\theta^n}=V_{x}U_{y}=\dfrac{v_xu_y}{u^Tv},
\]
where $\theta$ is the Perron root of $A$, and $U$ and $V$ are normalized left and right Perron eigenvectors such that $U^TV=1$. Hence we obtain Corollary~\ref{cor:normal}.

\begin{cor}\label{cor:normal}
	With the notations as above, 
	\begin{eqnarray*}
		\lim_{n \to \infty}\frac{f_{x,y}(n)}{\theta^n}&=&\dfrac{\left(1-\sum\limits_{i=1}^s \mathcal{C}_j(\theta) (x,a_j)_\theta\right)\left(1-\sum\limits_{i=1}^s \mathcal{R}_i(\theta) (\widetilde{a_i},y)_\theta\right)}{\theta^{2p-2} \left(1+r'(\theta)\right)}.
	\end{eqnarray*} 
\end{cor}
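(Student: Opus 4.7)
The proof will be a short assembly of the machinery already established, since the corollary is a direct consequence of the Perron-Frobenius asymptotic $A^n/\theta^n \to VU^T$ combined with the explicit formulas for $v$, $u$, and $u^Tv$ just derived. My plan is to start from the Perron-Frobenius theorem (which applies because $A$ is primitive), then translate back to the word-counting function $f_{x,y}(n)$ via the relation $(A^n)_{xy} = f_{x,y}(n+p-1)$ already recorded in the paragraph preceding the corollary, and finally substitute the closed-form expressions for the eigenvector entries and the normalization factor.

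Concretely, the first step is to recall that, for primitive $A$, one has
\[
\lim_{n\to\infty}\frac{(A^n)_{xy}}{\theta^n} = V_x U_y = \frac{v_x u_y}{u^Tv},
\]
where $U$ and $V$ are the normalized left and right Perron eigenvectors and $v=(v_x)_x$, $u=(u_x)_x$ are the (unnormalized) eigenvectors constructed in Theorem~\ref{thm:eig_vec}. Combining this with $(A^n)_{xy}=f_{x,y}(n+p-1)$ and replacing $n$ by $n-p+1$ yields
\[
\lim_{n\to\infty}\frac{f_{x,y}(n)}{\theta^n} = \frac{1}{\theta^{p-1}}\cdot\frac{v_x u_y}{u^Tv}.
\]

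The second step is to plug in the formulas for $v_x$ and $u_y$ from Theorem~\ref{thm:eig_vec},
\[
v_x = 1 - \sum_{j=1}^s \mathcal{C}_j(\theta)(x,a_j)_\theta,\qquad u_y = 1 - \sum_{i=1}^s \mathcal{R}_i(\theta)(\widetilde{a_i},y)_\theta,
\]
and for the normalization factor from Theorem~\ref{thm:eig_prod},
\[
u^Tv = \theta^{p-1}\bigl(1+r'(\theta)\bigr),
\]
which is permissible here because $\Sigma_\F$ is primitive (hence irreducible) with $\theta>1$ (hence of positive topological entropy), fulfilling the hypotheses of that theorem. Substituting, the factor $\theta^{p-1}\cdot\theta^{p-1} = \theta^{2p-2}$ appears in the denominator, giving exactly the stated expression.

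There is essentially no obstacle: the work has already been done in Theorems~\ref{thm:eig_vec} and~\ref{thm:eig_prod}. The only point requiring a sentence of care is verifying that the primitivity of $A$, rather than mere irreducibility, is what legitimizes applying the limit form of Perron-Frobenius (without it, only a Cesàro-type average of $A^n/\theta^n$ would converge to the rank-one projection $VU^T$). With that noted, the computation is a one-line substitution.
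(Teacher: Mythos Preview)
Your proposal is correct and follows essentially the same approach as the paper: the paper derives the corollary in the paragraph immediately preceding it by applying the Perron--Frobenius limit $\lim_{n\to\infty}A^n/\theta^n = VU^T$ for primitive $A$, using the identification $(A^n)_{xy}=f_{x,y}(n+p-1)$, and then substituting the expressions for $v_x$, $u_y$, and $u^Tv$ from Theorems~\ref{thm:eig_vec} and~\ref{thm:eig_prod}. Your additional remark that primitivity (rather than mere irreducibility) is what guarantees convergence of $A^n/\theta^n$ is a worthwhile clarification that the paper leaves implicit.
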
 

\section{Applications}\label{sec:app}

\subsection{An application to irreducible $(0,1)$ matrices}

\begin{thm}
	Let $A=[A_{xy}]_{1\le x,y\le n}$ be a $(0,1)$ irreducible matrix of size $n$ with Perron root $\theta$. Let $\F=\{xy \ \vert\ A_{xy}=0,\ 1\le x,y\le n\}$, labelled as $\{a_1,\dots, a_s\}$. Let $u=(u_x)_{1\le x\le n}$
	and $v=(v_x)_{1\le x\le n}$ be the vectors defined as 	
	\[
	u_x=1-\sum\limits_{\substack{i=1\\a_i\text{ ends with }x}}^s \mathcal{R}_i(\theta), \ \ v_x=1-\sum\limits_{\substack{i=1\\a_i\text{ begins with }x}}^s \mathcal{C}_i(\theta).
	\]
	Then for each $x$, $u_xv_x>0$. Moreover, $v$ and $u$ are right and left Perron eigenvectors, respectively, of $A$. \\
	Further, if $\theta>1$, the normalization factor for these Perron eigenvectors is given by 
	\[u^Tv =\theta(1+r'(\theta)).
	\]
\end{thm}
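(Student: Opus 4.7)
The plan is to recognize the statement as a direct specialization of Theorems~\ref{thm:eig_vec} and~\ref{thm:eig_prod} to the case in which every forbidden word has length exactly two, i.e.\ $p=2$. Given the irreducible $(0,1)$ matrix $A$ with index set $\Sigma=\{1,\dots,n\}$, I would view $A$ as the adjacency matrix of the one-step subshift $\Sigma_\F$ with $\F=\{xy\mid A_{xy}=0\}$. The collection $\F$ is automatically minimal, since its only proper subwords are single symbols and each such symbol appears in some allowed sequence by irreducibility of $A$; and $\Sigma_\F$ is irreducible precisely because $A$ is. Thus the hypotheses of the earlier theorems are in force with $p=2$, and the allowed words of length $p-1=1$ that index the Perron eigenvectors are exactly the vertices $1,\dots,n$, while the adjacency matrix of the associated one-step shift is $A$ itself.

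Next I would evaluate the correlation polynomials appearing in the general eigenvector formula in this degenerate length regime. Each $a_i\in\F$ has length two, so $\widetilde{a_i}$ is a single symbol; for a one-symbol word $x$, the defining sum of $(\widetilde{a_i},x)_z$ collapses to the single term $b_0\in\{0,1\}$, equal to $1$ iff $\widetilde{a_i}=x$, i.e.\ iff $a_i$ ends with the symbol $x$. By the same argument, $(x,a_j)_z\in\{0,1\}$ equals $1$ iff $a_j$ begins with $x$. Substituting these constant polynomials into the formulas
\[
u_x=1-\sum_{i=1}^s \mathcal{R}_i(\theta)(\widetilde{a_i},x)_\theta,\qquad v_x=1-\sum_{j=1}^s \mathcal{C}_j(\theta)(x,a_j)_\theta
\]
restricts the summations to those forbidden words ending (respectively beginning) with $x$, recovering exactly the expressions stated in the theorem.

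With the identification above, Theorem~\ref{thm:eig_vec} immediately gives that $u$ is a left Perron eigenvector and $v$ a right Perron eigenvector of $A$, together with the positivity $u_xv_x>0$ at every vertex $x$ (which rests on irreducibility of $\Sigma_\F$ via Lemma~\ref{lemma:mu_alt_nonzero}). For the normalization under the additional hypothesis $\theta>1$, Theorem~\ref{thm:eig_prod} yields $u^Tv=\theta^{p-1}\bigl(1+r'(\theta)\bigr)$, which specialises to $u^Tv=\theta\bigl(1+r'(\theta)\bigr)$ when $p=2$.

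Since every step is a specialization of results already established in the paper, I do not anticipate any serious technical obstacle; the only item that genuinely requires care is the explicit reduction of the correlation polynomials $(\widetilde{a_i},x)_\theta$ and $(x,a_j)_\theta$ to Boolean indicators in the length-two regime, which is essentially the content of the remark immediately following the theorem statement.
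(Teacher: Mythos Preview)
Your proposal is correct and follows essentially the same route as the paper's own proof, which simply invokes Theorem~\ref{thm:eig_vec} and observes that for length-two forbidden words the correlations $(\widetilde{a_i},x)_z$ and $(x,a_i)_z$ reduce to the Boolean indicators for ``$a_i$ ends with $x$'' and ``$a_i$ begins with $x$'' respectively. Your additional remarks verifying minimality of $\F$, irreducibility of $\Sigma_\F$, and the specialization $p=2$ in Theorem~\ref{thm:eig_prod} for the normalization factor are all appropriate and make the argument more self-contained than the paper's terse version.
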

\begin{proof}
	Use Theorem~\ref{thm:eig_vec}. Observe that $(\tilde{a_i},x)_z=1$ if $a_i$ ends with $x$, else equals 0. Also $(x,a_i)_z=1$ if $a_i$ begins with $x$, else equals 0.
\end{proof}

\subsection{An alternate definition of the Parry measure}\label{subsec:parry}
An immediate consequence of Theorem~\ref{thm:eig_prod} is the following result which gives an alternate definition for the Parry measure~\eqref{eq:pm} as was stated in the beginning of the paper.

\begin{thm}\label{thm:parry}
	Let $\Sigma_\F$ be an irreducible subshift of finite type with positive topological entropy. Let $w$ be an allowed word in $\Sigma_\F$ of length $n\ge p$ which starts with $(p-1)$-word $x$ and ends with $(p-1)$-word $y$. Then 
	\begin{eqnarray}\label{eq:pm}
		\mu(C_w)&=&\dfrac{\left(1-\sum\limits_{i=1}^s \mathcal{R}_i(\theta) (\widetilde{a_i},x)_\theta\right)\left(1-\sum\limits_{j=1}^s \mathcal{C}_j(\theta) (y,a_j)_\theta\right)}{\theta^{n} \left(1+r'(\theta)\right)}.
	\end{eqnarray} 
\end{thm}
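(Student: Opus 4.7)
The plan is to reduce the statement to a direct substitution into the classical definition of the Parry measure~\eqref{eq:pmold}, using the eigenvector formulas established in Theorem~\ref{thm:eig_vec} together with the normalization formula from Theorem~\ref{thm:eig_prod}. Concretely, the Parry measure of $C_w$ is given by
\[
\mu(C_w)=\frac{U_{x}V_{y}}{\theta^{n-p+1}},
\]
where $U$ and $V$ are normalized left and right Perron eigenvectors of the adjacency matrix $A$ satisfying $U^TV=1$. Since $\Sigma_\F$ is irreducible, the Perron eigenspaces are one-dimensional, so $U$ and $V$ must be scalar multiples of the left and right Perron eigenvectors $u$ and $v$ furnished by Theorem~\ref{thm:eig_vec}. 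The normalization $U^TV=1$ then forces $U=u/\sqrt{u^Tv}$ and $V=v/\sqrt{u^Tv}$, provided $u^Tv>0$.

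I would first verify this positivity and the well-definedness of the quotient on the right-hand side of~\eqref{eq:pm}. By Lemma~\ref{lemma:mu_alt_nonzero} together with Remark~\ref{rem:same_sign}, the numerator factors $u_x$ and $v_y$ in~\eqref{eq:ux_vy} are non-zero and have the same sign for every pair $(x,y)$ of allowed $(p-1)$-words; in particular every diagonal product $u_xv_x$ is strictly positive, so $u^Tv>0$. Furthermore, by Lemma~\ref{lemma:r'}, $1+r'(\theta)>0$, so the expression $\theta^{p-1}(1+r'(\theta))$ which Theorem~\ref{thm:eig_prod} identifies with $u^Tv$ is positive, consistent with the above.

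With well-definedness in hand, the calculation is then a one-line substitution:
\begin{eqnarray*}
\mu(C_w) &=& \frac{U_x V_y}{\theta^{n-p+1}}
       \ =\ \frac{u_x v_y}{\theta^{n-p+1}\,u^Tv}
       \ =\ \frac{u_x v_y}{\theta^{n-p+1}\,\theta^{p-1}\bigl(1+r'(\theta)\bigr)} \\
       &=& \frac{u_x v_y}{\theta^{n}\bigl(1+r'(\theta)\bigr)},
\end{eqnarray*}
after which plugging in the formulas for $u_x$ and $v_y$ from~\eqref{eq:ux_vy} yields the claimed expression~\eqref{eq:pm}.

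The only conceptual care needed is the sign ambiguity in taking square roots when normalizing $u$ and $v$: strictly speaking, one should choose the signs of $U$ and $V$ so that both are entrywise positive (which is possible precisely because $u$ and $v$ have constant-sign entries, by the preceding paragraph), and then $U^TV=1$ is automatic. This is the only place where one uses anything beyond the already-established theorems; the rest of the argument is purely mechanical substitution, so I do not anticipate any real obstacle here — all the technical difficulty has been absorbed into Theorems~\ref{thm:eig_vec} and~\ref{thm:eig_prod}.
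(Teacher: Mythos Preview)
Your proposal is correct and follows essentially the same route as the paper: the paper's proof is the one-line ``By~\eqref{eq:pmold}, $\mu(C_w)=U_xV_y/\theta^{n-p+1}$, use Theorem~\ref{thm:eig_prod},'' which is exactly the substitution you carry out. Your additional checks on positivity of $u^Tv$ and $1+r'(\theta)$ and on the sign ambiguity in the normalization are welcome clarifications but do not change the underlying argument.
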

\begin{proof}
	By~\eqref{eq:pmold}$, \mu(C_w)=U_xV_y/\theta^{n-p+1}$, use Theorem~\ref{thm:eig_prod}.
\end{proof}

The expression thus obtained for $\mu(C_w)$ requires the Perron root $\theta$ (which can be obtained using Theorem~\ref{thm:same_roots}), the rational function $r$, the inverse of the correlation matrix function $\mathcal{M}(z)$ for the collection $\F$, and the correlation of the forbidden words from $\F$ with $x$ and $y$. This alternate definition highlights several properties about the Parry measure which are not evident from the original definition~\eqref{eq:pmold}.

\begin{rems}
	(1) The Parry measure of cylinders based at words of identical length with the same starting $(p-1)$-word and the same ending $(p-1)$-word is equal. This is also reflected in~\eqref{eq:pm}.\\
	(2) All cylinders based at words $w$ satisfying $(w,a_i)_\theta=(a_i,w)_\theta=0$, for each $i=1,\dots,s$, have same measure given by 
	\[
	\mu(C_w)=\dfrac{1}{\theta^{n} \left(1+r'(\theta)\right)}.
	\]
	(3) Further, if allowed words $w,w'$, both of the same length $n\ge p$ are such that $(a_i,w)_\theta=(a_i,w')_\theta$ and $(w,a_i)_\theta=(w',a_i)_\theta$, for all $i=1,\dots,s$, then $\mu(C_w)=\mu(C_{w'})$.\\
	(4) Also if $(a_i,a_j)_\theta=0$ for all $i\ne j$, then~\eqref{eq:pm} reduces to
	\begin{eqnarray*}
		\mu(C_w)&=&\dfrac{1}{\theta^{n} \left(1+r'(\theta)\right)} \left(1-\sum_{i=1}^s \dfrac{(\widetilde{a_i},x)_\theta}{(a_i,a_i)_\theta}\right)\left(1-\sum_{j=1}^s \dfrac{(y,a_j)_\theta}{(a_j,a_j)_\theta}\right),
	\end{eqnarray*}
	where $r(z)=\sum_{i=1}^s1/(a_i,a_i)_z$.\\
	(5) Finally, one of the greatest advantage of the new definition is that to compute the measure of a cylinder, one does not need to know the eigenvectors $U$ and $V$ in~\eqref{eq:pm}, as are required in~\eqref{eq:pmold}. 
\end{rems}

\section{Illustrative examples}

\begin{exam}
	Consider an irreducible subshift $\Sigma_\F$ with $\Sigma=\{0,1,2\}$ ($q=3$) and one forbidden word $\F=\{01\}$ ($p=2$). The correlation matrix is given by $\mathcal{M}(z)=(z)$, thus $r(z)=1/z$. Therefore the denominator of the generating function is $(z-q)+r(z)=(z-3)+1/z$, the largest root of which is the same as the Perron root $\theta$ (Theorem~\ref{thm:same_roots}). Using Theorem~\ref{thm:eig_vec}, for all $x\in\{0,1,2\}$,
	\[
	u_x=1- \dfrac{(1,x)_\theta}{\theta},\ v_x=1-\dfrac{(x,00)_\theta}{\theta}.
	\]
	Thus 
	\[
	u^T=(u_0,u_1,u_2)=(1,1-1/\theta,1), \ v^T=(v_0,v_1,v_2)=(1-1/\theta,1,1).
	\]
	Hence, $u^Tv=3-2/\theta$.\\
	On the other hand, using Theorem~\ref{thm:eig_prod}, 
	\[
	u^Tv = \theta(1+r'(\theta))= \theta-1/\theta.
	\]
	These two expressions for $u^Tv$ are consistent since $\theta$ is a root of $(z-3)+1/z=0$.
\end{exam}

\begin{exam}
	Consider an irreducible subshift $\Sigma_\F$ with $\Sigma=\{0,1,2\}$ ($q=3$) and one forbidden word $\F=\{00\}$ ($p=2$). The correlation matrix $\mathcal{M}(z)=(z+1)$, thus $r(z)=1/(z+1)$. Therefore the denominator of the generating function is $(z-q)+r(z)=(z-3)+1/(z+1)=0$, the largest root of which is $\theta=\sqrt{3}+1$, the same as the Perron root (Theorem~\ref{thm:same_roots}). Using Theorem~\ref{thm:eig_vec}, for all $x\in\{0,1,2\}$,
	\[
	u_x=1- \dfrac{(0,x)_\theta}{\theta+1},\ v_x=1-\dfrac{(x,00)_\theta}{\theta+1}.
	\]
	Thus 
	\[
	u^T=(u_0,u_1,u_2)=(1-1/(\theta+1),1,1), \ v^T=(v_0,v_1,v_2)=(1-1/(\theta+1),1,1).
	\]
	Further using Theorem~\ref{thm:eig_prod}, 
	\[
	u^Tv = \theta(1+r'(\theta))= \dfrac{\theta^2(\theta+2)}{(\theta+1)^2}.
	\]
	Thus using Theorem~\ref{thm:parry}, for any allowed word $w$ of length $n$ which begins with symbol $x$ and ends with symbol $y$,
	\begin{eqnarray*}
		\mu(C_w)&=&\dfrac{\left(1- \dfrac{(0,x)_\theta}{\theta+1}\right)\left(1-\dfrac{(y,00)_\theta}{\theta+1}\right)}{\theta^{n} \dfrac{\theta(\theta+2)}{(\theta+1)^2}}=\dfrac{\left(\theta+1-(0,x)_\theta\right)\left(\theta+1-(y,00)_\theta\right)}{\theta^{n+1}(\theta+2)}.
	\end{eqnarray*}
	As an illustration, for words $w$ of length two beginning with symbol $x$ and ending with symbol $y$, 
	\[
	\mu(C_{xy})= \dfrac{\theta+1}{\theta^2(\theta+2)}=\dfrac{3-\sqrt{3}}{12}, 
	\]
	for all pairs $(x,y)=(0,1)$, $(0,2)$, $(1,0)$, $(2,0)$. Similarly, for $(x,y)=(1,1)$, $(2,2)$, $(1,2)$, and $(2,1)$, $\mu(C_{xy}) =\sqrt{3}/12$. \\
	By directly computing the Perron root and corresponding left and right eigenvectors of the adjacency matrix $\begin{pmatrix}0&1&1\\1&1&1\\1&1&1\end{pmatrix}$ of the subshift, we get $\theta=\sqrt{3}+1$, $v=u=(\sqrt{3}-1,1,1)^T/c$, where $c^2=u^Tv=6-2\sqrt{3}=2\sqrt{3}(\sqrt{3}-1)$. Hence, for words $w$ of length two beginning with symbol $x$ and ending with symbol $y$, 	
	\[
	\mu(C_{xy}) =\dfrac{\sqrt{3}-1}{c^2\theta}=\dfrac{3-\sqrt{3}}{12},
	\]
	for all pairs $(x,y)=(0,1)$, $(0,2)$, $(1,0)$, $(2,0)$. A similar statement is true for the remaining pairs $(x,y)$.
\end{exam}

\begin{exam}
	Let $q=5$ and $\F=\{00,1010\}$. Here the length of the longest word of $\F$ is $p=4$ and the associated adjacency matrix is irreducible. We use Theorem~\ref{thm:parry} for finding the Parry measure of cylinders in $\Sigma_\F$. The correlation matrix for $\F$ is given by 
	\[
	\mathcal{M}(z)=\begin{pmatrix}
		z+1&1\\
		0&z^3+z
	\end{pmatrix},
	\]
	which gives $r(z)=\frac{z^2+2}{z^3+z^2+z+1}$. The largest positive real zero in modulus of $(z-5)+r(z)=0$ is $\theta\sim 4.82113$ (the Perron root). \\
	Using Theorem~\ref{thm:parry}, for any allowed word $w$ of length $n$ which begins with the word $x$ and ends with the word $y$, both of length three,
	\begin{eqnarray*}
		\mu(C_w)&=&\dfrac{1}{\theta^{n} \left(1-\dfrac{\theta^4+5\theta^2+2\theta+2}{(\theta^3+\theta^2+\theta+1)^2}\right)}\left(1-\dfrac{(\theta^3+\theta-1)(0,x)_\theta}{\theta^4+\theta^3+\theta^2+\theta}-\dfrac{(010,x)_\theta}{\theta^3+\theta}\right)\times\\
		&&\hspace{5cm} \left(1-\dfrac{(y,00)_\theta}{\theta+1}-\dfrac{(y,1010)_\theta}{\theta^3+\theta^2+\theta+1}\right).
	\end{eqnarray*} 
	In particular, if $w=0101$, then $x=010$, $y=101$. Substituting $(0,x)_\theta=1,(010,x)_\theta=\theta^2+1$, $(y,00)_\theta=0$, $(y,1010)_\theta=\theta^2+1$, we obtain $\mu(C_w)\sim0.000987$.
\end{exam}

\begin{exam}
	Let $q=5$ and $\F=\{0000,0001\}$. Here the length of the forbidden words is $p=4$ and the adjacency matrix is irreducible with size $5^3=125$. We use Theorem~\ref{thm:parry} for finding the Parry measure of cylinders. The correlation matrix for $\F$ is given by 
	\[
	\mathcal{M}(z)=\begin{pmatrix}
		z^3+z^2+z+1&0\\
		z^2+z+1&z^3
	\end{pmatrix},
	\]
	which gives $r(z)=\frac{2z^3}{z^6+z^5+z^4+z^3}$. The largest positive real zero in modulus of $(z-5)+r(z)=0$ is $\theta\sim 4.987$ (the Perron root). \\
	Using Theorem~\ref{thm:parry}, for any allowed word $w$ of length $n$ which begins with the word $x$ and ends with the word $y$ of length 3,
	\begin{eqnarray*}
		\mu(C_w)&=&\dfrac{1}{\theta^{n} \left(1-\dfrac{6\theta^2+4\theta+2}{(\theta^3+\theta^2+\theta+1)^2}\right)}\left(1-\dfrac{ \theta^3 ((000,x)_\theta+(001,x)_\theta)}{\theta^6+\theta^5+\theta^4+\theta^3}\right)\times\\
		&&\left(1-\dfrac{(\theta^3-\theta^2-\theta-1)(y,0000)_\theta+(\theta^3+\theta^2+\theta+1)(y,0001)_\theta}{\theta^6+\theta^5+\theta^4+\theta^3}\right).
	\end{eqnarray*} 
	In particular, if $w=0101$, then $x=010$, $y=101$. Substituting $(000,x)_\theta=1,(001,x)_\theta=\theta$, $(y,0000)_\theta=0$, $(y,0001)_\theta=0$, we obtain
	\begin{eqnarray*}
		\mu(C_w)&=&\dfrac{1}{\theta^{4} \left(1-\dfrac{6\theta^2+4\theta+2}{(\theta^3+\theta^2+\theta+1)^2}\right)}\left(1-\dfrac{\theta^3(\theta+1)}{\theta^6+\theta^5+\theta^4+\theta^3}\right)\sim 0.001565.
	\end{eqnarray*} 
\end{exam}

\section{Acknowledgements}
We thank the anonymous referees and the editor for their valuable comments and suggestions for improving the paper. Aditya Thorat provided an excellent research support and assistance with simulating examples in Python. The code is available in the next section. The funding support for this research are gratefully acknowledged. The research of the first author is supported by the Council of Scientific \& Industrial Research (CSIR), India (File no.~09/1020(0133)/2018-EMR-I), and the second author is supported by the Science Engineering Research Board, Department
of Science and Technology, India (File No. CRG/2019/003823) and the Center for Research on Environment and Sustainable Technologies (CREST), IISER Bhopal, CoE funded by the Ministry of Human Resource Development (MHRD), India.

\section{Python code}
\begin{verbatim}
	import numpy as np
	from itertools import product
	Q= input(" Enter the size of symbol set:")
	q= int(Q)
	S= input("Enter the number of forbidden words:")
	s= int(S)
	#Taking forbidden words as input
	forbiddenwords=['' for i in range(s)]
	for i in range(s):
	w= input(" Enter forbidden word no."+str(i)+":" )
	for j in range(len(w)):
	forbiddenwords[i]= forbiddenwords[i]+w[j]
	#Finding the length of the longest forbidden word
	L= [0 for i in range(s)]
	for i in range(s):
	L[i]= len(forbiddenwords[i])
	P= max(L)
	#Generating set of all words of length p-1 with symbols from {0,1,...,q-1}
	Allwords=[]
	alphabet=[]
	for i in range(q):
	alphabet.append(str(i))
	Allwords = [''.join(i) for i in product(alphabet, repeat = P-1)]
	#Defining allowed words of length p-1
	AllowedWords= list(Allwords)
	C=q**(P-1)
	G=[0 for i in range(C)]
	for i in range(len(Allwords)):
	init=0
	for b in forbiddenwords:
	if b in Allwords[i]:
	init+=1
	if init!=0:
	G[i]=1
	for i in range(C):
	a= Allwords[i]
	if G[i] != 0:
	AllowedWords.remove(a)
	#Creating the adjacency matrix of the subshift
	c= len(AllowedWords)
	A= [[0 for i in range(c)] for j in range(c)]
	for i in range(c):
	for j in range(c):
	g= AllowedWords[i]
	h= AllowedWords[j]
	b=1
	for k in range(P-2):
	if g[k+1] != h[k]:
	b=0
	if b==1:
	y= g + h[P-2]
	for a in forbiddenwords:
	if a in y:
	b=0
	A[i][j]=b
	#Defining the function to determine the correlation polynomials
	def getCorrelation(w1, w2):
	correlation= []
	if len(w1)== 1 and len(w2)==1:
	if w1[0]==w2[0]:
	correlation.append(1)
	if w1[0] != w2[0]:
	correlation.append(0)
	else:
	for i in range(len(w1)):
	a= min(len(w1)-i, len(w2))
	b=1
	for j in range(a):
	if w2[j] != w1[i+j]:
	b=0
	correlation.append(b)
	return correlation
	#Defining the determinant of a matrix function
	def getcofactor(m, i, j):
	return [row[: j] + row[j+1:] for row in (m[: i] + m[i+1:])]
	def determinantOfMatrix(mat):
	if(len(mat)==1):
	return mat[0][0]
	if(len(mat) == 2):
	b1 = np.polymul(mat[0][0] , mat[1][1])
	b2= np.polymul( mat[1][0] , mat[0][1])
	return np.polyadd(b1, -b2)
	Sum = 0
	for current_column in range(len(mat)):
	sign = (-1) ** (current_column)
	sub_det = determinantOfMatrix(getcofactor(mat, 0, current_column))
	Sum = np.polyadd( Sum, (sign * mat[0][current_column] * sub_det))
	return Sum
	#Defining the adjoint of a matrix function
	def adjointofMatrix(mat):
	if len(mat)==1:
	return [[1]]
	sign=1
	adj= [ [0 for i in range(len(mat))] for j in range(len(mat))]
	for i in range(len(mat)):
	for j in range(len(mat)):
	sign= (-1)**(i+j)
	temp1= getcofactor(mat, i, j)
	temp2= determinantOfMatrix(temp1)
	adj[j][i] = sign*temp2
	return adj
	#Getting the sum of matrix entries
	def getMatrixsum(mat):
	Sum=0
	for i in range(len(mat)):
	for j in range(len(mat)):
	Sum= np.polyadd(Sum, mat[i][j])
	return Sum
	#The correlation matrix M(z)
	M= [[0 for i in range(s)] for j in range(s)]
	for i in range(s):
	for j in range(s):
	temp= np.poly1d(getCorrelation(forbiddenwords[i],
	forbiddenwords[j]))
	M[j][i]= temp
	#The Perron root using theorem 1
	p1= np.polyadd(np.polymul(np.poly1d([1,-q]),
	determinantOfMatrix(M)),getMatrixsum(adjointofMatrix(M)))
	p2=determinantOfMatrix(M)
	P1=np.roots(p1).tolist()
	P2=np.roots(p2).tolist()
	## step a: removing the common factors of numerator and denominator of
	z-q+r(z)
	for a in P1:
	if a in P2:
	P1.remove(a)
	P2.remove(a)
	eigenvaluesmoduli=[]
	for a in range(len(P1)):
	S= abs(P1[a])
	eigenvaluesmoduli.append(S)
	theta= max(eigenvaluesmoduli)
	def Remove(test_list, item, number):
	## step b:removing the item for all its occurrences
	for i in range(number):
	test_list.remove(item)
	return test_list
	def Polynomialwithgivenroots(list):
	prod=np.poly1d([1])
	for a in list:
	prod= np.polymul(prod, np.poly1d([1,-a]) )
	return prod
	#Column sum of adjoint matrix
	C=[0 for i in range(s)]
	for i in range(s):
	Sum=np.poly1d([0])
	temp= list(determinantOfMatrix(M))
	B= np.poly1d(temp)
	for j in range(s):
	Sum = np.polyadd(Sum, adjointofMatrix(M)[j][i])
	if theta in np.roots(determinantOfMatrix(M)):
	k= np.roots(determinantOfMatrix(M)).tolist().count(theta)
	B=
	Polynomialwithgivenroots(Remove(np.roots(determinantOfMatrix(M)).tolist(),
	theta, k))
	Sum=Polynomialwithgivenroots(Remove(np.roots(Sum).tolist(), theta,
	k))
	C[i]= (np.polyval(Sum, theta))/(np.polyval(B, theta))
	#Row sum of adjoint matrix
	R=[0 for i in range(s)]
	for i in range(s):
	Sum=0
	temp= list(determinantOfMatrix(M))
	B= np.poly1d(temp)
	for j in range(s):
	Sum = np.polyadd(Sum, adjointofMatrix(M)[i][j])
	if theta in np.roots(determinantOfMatrix(M)):
	k= np.roots(determinantOfMatrix(M)).tolist().count(theta)
	B=
	Polynomialwithgivenroots(Remove(np.roots(determinantOfMatrix(M)).tolist(),
	theta, k))
	Sum=Polynomialwithgivenroots(Remove(np.roots(Sum).tolist(), theta,
	k))
	R[i]= np.polyval(Sum, theta)/(B(theta))
	#The eigenvectors using theorem 2.
	u= [0 for i in range(c)]
	v=[0 for i in range(c)]
	for x in range(c):
	Sum=0
	for i in range(s):
	G= forbiddenwords[i][1:]
	H= np.polyval(getCorrelation(G, AllowedWords[x]), theta)
	Sum= Sum+ R[i]*H
	u[x]= 1- Sum
	for x in range(c):
	Sum=0
	for i in range(s):
	G= forbiddenwords[i]
	H= np.polyval(getCorrelation(AllowedWords[x], G), theta)
	Sum= Sum+ C[i]*H
	v[x]= 1- Sum
	#Printing outputs
	print('The adjacency matrix A=',A)
	print('The correlation matrix, M(z)=',M)
	print('Determinant of M(z), D(z)=')
	print(determinantOfMatrix(M))
	print('Sum of entries of adjoint of M(z), S(z)=')
	print(getMatrixsum(adjointofMatrix(M)))
	print(‘The Perron eigenvalue of A is:’,theta)
	print(‘The left Perron eigenvector of A is:’, u)
	print(‘The right Perron eigenvector of A is:’,v)
\end{verbatim}
\vspace{.1in}
\noindent\textbf{Outputs for the examples}:\\
\noindent\textbf{Example 9.1}:
\begin{verbatim}
	Enter the size of symbol set:3
	Enter the number of forbidden words:1
	Enter forbidden word no.0:01
	The adjacency matrix A= [[1, 0, 1], [1, 1, 1], [1, 1, 1]]
	The correlation matrix, M(z)= [[poly1d([1, 0])]]
	Determinant of M(z), D(z)=1 x
	Sum of entries of adjoint of M(z), S(z)=[1]
	The Perron eigenvalue of A is: 2.618033988749895
	The left Perron eigenvector of A is: [1.0, 0.6180339887498949, 1.0]
	The right Perron eigenvector of A is: [0.6180339887498949, 1.0, 1.0]
\end{verbatim}

\noindent\textbf{Example 9.2}:
\begin{verbatim}
	Enter the size of symbol set:3
	Enter the number of forbidden words:1
	Enter forbidden word no.0:00
	The adjacency matrix A= [[0, 1, 1], [1, 1, 1], [1, 1, 1]]
	The correlation matrix, M(z)= [[poly1d([1, 1])]]
	Determinant of M(z), D(z)=1 x + 1
	Sum of entries of adjoint of M(z), S(z)=[1]
	The Perron eigenvalue of A is: 2.732050807568877
	The left Perron eigenvector of A is: [0.7320508075688773, 1.0, 1.0]
	The right Perron eigenvector of A is: [0.7320508075688773, 1.0, 1.0]
\end{verbatim}

\noindent\textbf{Example 9.3}:
\begin{verbatim}
	Enter the size of symbol set:5
	Enter the number of forbidden words:2
	Enter forbidden word no.0:00
	Enter forbidden word no.1:1010
	The adjacency matrix A= output omitted due to the size of the matrix
	The correlation matrix, M(z)= [[poly1d([1, 1]), poly1d([1])], [poly1d([0]),
	poly1d([1, 0, 1, 0])]]
	Determinant of M(z), D(z)=
	4 3 2
	1 x + 1 x + 1 x + 1 x
	Sum of entries of adjoint of M(z), S(z)=
	3
	1 x + 2 x
	The Perron eigenvalue of A is: 4.821125912405385
	The left Perron eigenvector of A is: [0.6222612838798931,
	0.8211259124053896, 0.8211259124053896, 0.8211259124053896,
	0.8211259124053896, 0.8211259124053896, 0.8211259124053896,
	0.8211259124053896, 0.8211259124053896, 0.8211259124053896,
	0.8211259124053896, 0.8211259124053896, 0.8211259124053896,
	0.8211259124053896, 0.8211259124053896, 0.8211259124053896,
	0.8211259124053896, 0.8211259124053896, 0.8211259124053896,
	0.8211259124053896, 0.9587514136451422, 0.9587514136451422,
	0.9587514136451422, 0.9587514136451422, 1.0, 1.0, 1.0, 1.0, 1.0, 1.0, 1.0,
	1.0, 1.0, 1.0, 1.0, 1.0, 1.0, 1.0, 1.0, 1.0, 1.0, 1.0, 1.0, 1.0, 1.0, 1.0,
	1.0, 1.0, 1.0, 1.0, 1.0, 1.0, 1.0, 1.0, 1.0, 1.0, 1.0, 1.0, 1.0, 1.0, 1.0,
	1.0, 1.0, 1.0, 1.0, 1.0, 1.0, 1.0, 1.0, 1.0, 1.0, 1.0, 1.0, 1.0, 1.0, 1.0,
	1.0, 1.0, 1.0, 1.0, 1.0, 1.0, 1.0, 1.0, 1.0, 1.0, 1.0, 1.0, 1.0, 1.0, 1.0,
	1.0, 1.0, 1.0, 1.0, 1.0, 1.0, 1.0, 1.0, 1.0, 1.0, 1.0, 1.0, 1.0, 1.0, 1.0,
	1.0, 1.0, 1.0, 1.0, 1.0, 1.0, 1.0, 1.0, 1.0, 1.0]
	The right Perron eigenvector of A is: [0.7940493563331796,
	0.992913984858676, 1.0, 1.0, 1.0, 0.8282119275467135, 0.992913984858676,
	1.0, 1.0, 1.0, 0.8282119275467135, 0.992913984858676, 1.0, 1.0, 1.0,
	0.8282119275467135, 0.992913984858676, 1.0, 1.0, 1.0, 0.8282119275467135,
	1.0, 1.0, 1.0, 0.7940493563331796, 0.992913984858676, 1.0, 1.0, 1.0,
	0.8282119275467135, 0.992913984858676, 1.0, 1.0, 1.0, 0.8282119275467135,
	0.992913984858676, 1.0, 1.0, 1.0, 0.8282119275467135, 0.992913984858676,
	1.0, 1.0, 1.0, 0.992913984858676, 1.0, 1.0, 1.0, 0.7940493563331796,
	0.992913984858676, 1.0, 1.0, 1.0, 0.8282119275467135, 0.992913984858676,
	1.0, 1.0, 1.0, 0.8282119275467135, 0.992913984858676, 1.0, 1.0, 1.0,
	0.8282119275467135, 0.992913984858676, 1.0, 1.0, 1.0, 0.992913984858676,
	1.0, 1.0, 1.0, 0.7940493563331796, 0.992913984858676, 1.0, 1.0, 1.0,
	0.8282119275467135, 0.992913984858676, 1.0, 1.0, 1.0, 0.8282119275467135,
	0.992913984858676, 1.0, 1.0, 1.0, 0.8282119275467135, 0.992913984858676,
	1.0, 1.0, 1.0, 0.992913984858676, 1.0, 1.0, 1.0, 0.7940493563331796,
	0.992913984858676, 1.0, 1.0, 1.0, 0.8282119275467135, 0.992913984858676,
	1.0, 1.0, 1.0, 0.8282119275467135, 0.992913984858676, 1.0, 1.0, 1.0,
	0.8282119275467135, 0.992913984858676, 1.0, 1.0, 1.0]
\end{verbatim}

\noindent\textbf{Example 9.4}:
\begin{verbatim}
	Enter the size of symbol set:5
	Enter the number of forbidden words:2
	Enter forbidden word no.0:0000
	Enter forbidden word no.1:0001
	The adjacency matrix A= output omitted due to the size of the matrix
	The correlation matrix, M(z)= [[poly1d([1, 1, 1, 1]), poly1d([0])],
	[poly1d([1, 1, 1]), poly1d([1, 0, 0, 0])]]
	Determinant of M(z), D(z)=
	6 5 4 3
	1 x + 1 x + 1 x + 1 x
	Sum of entries of adjoint of M(z), S(z)=
	3
	2 x
	The Perron eigenvalue of A is: 4.987087795283145
	The left Perron eigenvector of A is: [0.800776738163447, 0.800776738163447,
	0.9613467483647578, 0.9613467483647578, 0.9613467483647578,
	0.9613467483647578, 0.9613467483647578, 0.9613467483647578,
	0.9613467483647578, 0.9613467483647578, 0.9935438976415721,
	0.9935438976415721, 0.9935438976415721, 0.9935438976415721,
	0.9935438976415721, 0.9935438976415721, 0.9935438976415721,
	0.9935438976415721, 0.9935438976415721, 0.9935438976415721,
	0.9935438976415721, 0.9935438976415721, 0.9935438976415721,
	0.9935438976415721, 0.9935438976415721, 0.9935438976415721,
	0.9935438976415721, 0.9935438976415721, 0.9935438976415721,
	0.9935438976415721, 0.9935438976415721, 0.9935438976415721,
	0.9935438976415721, 0.9935438976415721, 0.9935438976415721,
	0.9935438976415721, 0.9935438976415721, 0.9935438976415721,
	0.9935438976415721, 0.9935438976415721, 0.9935438976415721,
	0.9935438976415721, 0.9935438976415721, 0.9935438976415721,
	0.9935438976415721, 0.9935438976415721, 0.9935438976415721,
	0.9935438976415721, 0.9935438976415721, 0.9935438976415721, 1.0, 1.0, 1.0,
	1.0, 1.0, 1.0, 1.0, 1.0, 1.0, 1.0, 1.0, 1.0, 1.0, 1.0, 1.0, 1.0, 1.0, 1.0,
	1.0, 1.0, 1.0, 1.0, 1.0, 1.0, 1.0, 1.0, 1.0, 1.0, 1.0, 1.0, 1.0, 1.0, 1.0,
	1.0, 1.0, 1.0, 1.0, 1.0, 1.0, 1.0, 1.0, 1.0, 1.0, 1.0, 1.0, 1.0, 1.0, 1.0,
	1.0, 1.0, 1.0, 1.0, 1.0, 1.0, 1.0, 1.0, 1.0, 1.0, 1.0, 1.0, 1.0, 1.0, 1.0,
	1.0, 1.0, 1.0, 1.0, 1.0, 1.0, 1.0, 1.0, 1.0, 1.0, 1.0, 1.0]
	The right Perron eigenvector of A is: [0.6015534763268939, 1.0, 1.0, 1.0,
	1.0, 0.9870877952831443, 1.0, 1.0, 1.0, 1.0, 0.9870877952831443, 1.0, 1.0,
	1.0, 1.0, 0.9870877952831443, 1.0, 1.0, 1.0, 1.0, 0.9870877952831443, 1.0,
	1.0, 1.0, 1.0, 0.9226934967295154, 1.0, 1.0, 1.0, 1.0, 0.9870877952831443,
	1.0, 1.0, 1.0, 1.0, 0.9870877952831443, 1.0, 1.0, 1.0, 1.0,
	0.9870877952831443, 1.0, 1.0, 1.0, 1.0, 0.9870877952831443, 1.0, 1.0, 1.0,
	1.0, 0.9226934967295154, 1.0, 1.0, 1.0, 1.0, 0.9870877952831443, 1.0, 1.0,
	1.0, 1.0, 0.9870877952831443, 1.0, 1.0, 1.0, 1.0, 0.9870877952831443, 1.0,
	1.0, 1.0, 1.0, 0.9870877952831443, 1.0, 1.0, 1.0, 1.0, 0.9226934967295154,
	1.0, 1.0, 1.0, 1.0, 0.9870877952831443, 1.0, 1.0, 1.0, 1.0,
	0.9870877952831443, 1.0, 1.0, 1.0, 1.0, 0.9870877952831443, 1.0, 1.0, 1.0,
	1.0, 0.9870877952831443, 1.0, 1.0, 1.0, 1.0, 0.9226934967295154, 1.0, 1.0,
	1.0, 1.0, 0.9870877952831443, 1.0, 1.0, 1.0, 1.0, 0.9870877952831443, 1.0,
	1.0, 1.0, 1.0, 0.9870877952831443, 1.0, 1.0, 1.0, 1.0, 0.9870877952831443,
	1.0, 1.0, 1.0, 1.0]
\end{verbatim}

\end{document}